\newcommand{\E}{\mathbb{E}}
\newcommand{\R}{\mathbb{R}}
\newcommand{\N}{\mathbb{N}}
\newcommand*\dd{\mathop{}\!\mathrm{d}}
\newcommand{\HH}{\mathbb{H}}
\newcommand{\EE}{E}
\newcommand{\Z}{\mathcal{Z}}
\newtheorem{theo}{Theorem}[section]
\newtheorem{rem}[theo]{Remark}
\newtheorem{propo}[theo]{Proposition}
\newtheorem{lemma}[theo]{Lemma}
\newtheorem{ass}[theo]{Assumption}
\begin{document}
	
\title{Analysis of splitting schemes for stochastic evolution equations with non-Lipschitz nonlinearities driven by fractional noise
	\thanks{This work was supported by the Natural Science Foundation of Shaanxi
		Province (Grant No.2025JC-YBMS-277), Shaanxi Fundamental Science Research Project for Mathematics and Physics (Grant No.23JSY044).}
}


\author{\begin{tabular}[t]{c@{\extracolsep{0em}}c}
		{\large  Xiao-Li Ding$^{a}$, Charles-Edouard Br\'{e}hier$^{b}$, Dehua Wang$^{c}$}\\
		{\normalsize \it $^a$Department of Mathematics, Xi'an Polytechnic  University, Shaanxi 710048, China.} \\
		{\normalsize \it $^b$Universit{e} de Pau et des Pays de l'Adour, E2S UPPA, CNRS, LMAP, Pau, France.} \\
		{\normalsize \it $^c$Department of Mathematics, Xi'an Technological University, Shaanxi 710021, China.} \\
		{\normalsize \it
			(dingding0605@126.com; charles-edouard.brehier@univ-pau.fr; dhwang@xatu.edu.cn) }
\end{tabular}}

\maketitle

\begin{abstract}
We propose a novel time-splitting scheme for a class of semilinear stochastic evolution equations driven by cylindrical fractional noise. The nonlinearity is decomposed as the sum of a one-sided, non-globally, Lipschitz continuous function, and of a globally Lipschitz continuous function. The proposed scheme is based on a splitting strategy, where the first nonlinearity is treated using the exact flow of an associated differential equation, and the second one is treated by an explicit Euler approximation. We prove mean-square, strong error estimates for the proposed scheme and show that the order of convergence is $H-1/4$, where $H\in(1/4,1)$ is the Hurst index. For the proof, we establish new regularity results for real-valued and infinite dimensional fractional Ornstein-Uhlenbeck process depending on the value of the Hurst parameter $H$. Numerical experiments illustrate the main result of this manuscript.

\end{abstract}

	{\bf Key words}: Stochastic partial differential equations; Fractional Brownian motion; splitting schemes; strong error estimates.
	
\section{Introduction}
Fractional Brownian motion (fBm) presents both randomness and strong scale-free correlations. Unlike the standard Brownian motion, fractional Brownian motion is a non-Markovian stochastic process characterized by power-law autocorrelation function, which is determined by the Hurst parameter $H\in (0,1)$. Stochastic differential equations driven by fBm constitute a fundamental class of stochastic systems, and they have been extensively applied in modeling physical phenomena exhibiting long-range dependence and anomalous behavior, such as financial time series exhibiting non-Markovian dynamics \cite{6}, complex physical processes demonstrating anomalous diffusion patterns \cite{7}, generative inpainting \cite{Lobashev-2025}, and so on. 

The analysis of well-posedness, regularity properties, and numerical approximations for stochastic differential equations with fBm are not only a fundamental topic but also a vitally important research area. In recent years these topics have witnessed a lot of attention, see for instance~\cite{Duncan-2000,Maslowski,Mishura,Neuenkirch,5,11,10,Ding-2024,Ding-2024-1}. It should be noted that, however, significant challenges remain in the study of such equations with non-Lipschitz coefficients driven by fBm \cite{Liu-2022,Zhou-2023}. One of the main reasons is that the fBm is neither a Markov process nor a semimartingale, hence the classical stochastic analysis methods can not be applied directly. In this situation, some new stochastic analysis tools need to be further explored.

Stochastic partial differential equations with non-Lipschitz coefficients driven by fractional noise are an important class of stochastic partial differential equations. The typical example is stochastic Allen-Cahn equation, which has been applied into modeling a two phase system driven by the Ginzburg-Landan energy and other rare stochastic systems \cite{Allen-1979,Kohn-2007,Lindgren-2015}. It is well-known that the main difficulty of dealing with such stochastic partial differential equation is the presence of non globally Lipschitz continuous nonlinearities. In the last decade,  progress has been made in the theoretical analysis and numerical approximations, using numerical schemes based on exponential integrator \cite{Becker-2017-1}, tamed discretization of the nonlinear coefficient \cite{Lindgren-2015-1}, Wong-Zakai approxiamtion \cite{Liu-2017-1} and full-phase flow splitting method \cite{Brehier-18-01,Brehier-19-02,Brehier-20-02}, see also \cite{Higham-2002,Zamani-2010,Becker-2018,Qi-2019}. All those references deal with systems driven by standard Wiener processes, i.e. with Hurst parameter $H=1/2$.
However, we note that there remains room for further improvement and refinement. For example, although the full-phase flow splitting method \cite{Brehier-18-01,Brehier-19-02,Brehier-20-02} is feasible for the stochastic Allen--Cahn equation, one may not be able to implement such schemes for other models. This suggests the application of semi-phase flow methods, where the drift is decomposed into two parts, one being treated using the exact flow, the other one being treated by an explicit Euler approximation. In addition, the analysis for the fractional case $H\neq 1/2$ needs to be developped.


In this manuscript, we consider stochastic evolution equations driven by fractional noise, which may be written as
\[
\left\lbrace
\begin{aligned}
&\dd X(t)+AX(t)\dd t=\bigl(F(X(t))+G(X(t))\bigr)\dd t+\dd B^H(t),\quad \forall~t\ge 0,\\
&X(0)=X_0,
\end{aligned}
\right.
\]
where the assumptions on $A$, $F$ and $G$ will be given in Section \ref{S.2}, $\{B^H(t)\}_{t\in [0,T]}$ is a cylindrical fractional Brownian motion depending on the Hurst parameter $H$, and $X_0$ is an initial value. For well-posedness, it is assumed that $H\in(1/4,1)$. The case $H\neq 1/2$ has already been treated in previous work so it is omitted. The nonlinearity $F$ is non-globally Lipschitz continuous, but it satisfies a one-sided Lischitz continuity property, whereas the nonlinearity $G$ is assumed to be globally Lipschitz continuous.

We propose to compute approximate the solution of the stochastic evolution equation using the following numerical scheme
\[
\left\lbrace
\begin{aligned}
&X_{n+1}=S(\Delta t)\Phi_{\Delta t}(X_n)+A^{-1}\bigl(I-S(\Delta t)\bigr)G(X_n)+S(\Delta t)\Delta B_n^H,\quad \forall~n\in\{0,\ldots,N-1\},\\
&X_0=x_0,
\end{aligned}
\right.
\]
where $\Delta t$ is the time-step size, $\Delta B_n^H$ are increments of the cylindrical fractional Brownian motion, and $\Phi_{\Delta t}$ denotes the phase-flow associated with the nonlinearity $F$.

The main result of this manuscript is Theorem~\ref{theo:main}: one obtains strong error estimates, with order of convergence $H-1/4$, with respect to the time-step size $\Delta t$. This result is illustrated by numerical experiments in Section~\ref{S.5}. 

In order to prove Theorem~\ref{theo:main}, we provide auxiliary results, mainly on the regularity properties of the fractional Ornstein--Uhlenbeck and its numerical approximation, obtained when $F=G=0$. Different arguments are used when $H\in(1/2,1)$ or when $H\in(1/4,1/2)$. For completeness, detailed proofs are provided in the appendices.

This article is organized as follows. The setting and assumptions are presented in Section~\ref{S.2}. Proposition~\ref{propo:errorXtildeXexact} recalls well-posedness and regularity properties of solutions to the stochastic evolution equation. Section~\ref{S.4} is devoted to the presentation of the numerical scheme and to the analysis of its convergence. It contains the statement of Theorem~\ref{theo:main} and the statement and proofs of several auxiliary results. Section~\ref{S.5} provides numerical experiments which illustrate the main result of this work. Section~\ref{S.6} presents some conclusion and perspectives for future works. Appendices~\ref{App.A},~\ref{app:+},~\ref{app:-} and~\ref{app:D} provide key auxiliary results and their proofs on real-valued fractional Ornstein--Uhlenbeck processes, which play a crucial role in the analysis.

\section{Setting}\label{S.2}

\subsection{Notation}

Let $\HH$ denote the separable Hilbert space $L^2((0,1);\mathbb R)$ of square-integrable real-valued functions defined on $(0,1)$. The associated inner product $\langle \cdot,\cdot \rangle$ and norm $\|\cdot\|$ are defined by  
\begin{align*}
\langle x_1, x_2\rangle=\int_0^1x_1(\xi)x_2(\xi)\dd\xi, \quad \|x\|=\Bigg(\int_0^1x(\xi)^2\dd\xi\Bigg)^{1/2},\quad \forall~x_1,x_2,x\in\HH.
\end{align*}
Moreover, let $E$ denote the Banach space ${\mathcal C}([0,1];\R)$ of continuous functions from $[0,1]$ to $\R$. The associated norm $\|\cdot\|_\EE$ is defined by
\begin{align*}
\|x\|_\EE=\underset{\xi\in[0,1]}\max~|x(\xi)|. 
\end{align*}

Finally, let $\mathcal{L}(\HH)$ denote the space of bounded linear operators from $\HH$ to $\HH$, and let $\mathcal{L}_2(\HH)$ denote the space of Hilbert--Schmidt linear operators from $\HH$ to $\HH$. The associated norm on $\mathcal{L}(\HH)$, resp. on $\mathcal{L}_2(\HH)$, is denoted by $\|\cdot\|_{\mathcal{L}(\HH)}$, resp. by $\|\cdot\|_{\mathcal{L}_2(\HH)}$.

\subsection{Linear operator}

In this work, the linear operator $-A$ is the realization of the Laplace operator with homogeneous Dirichlet boundary conditions on the interval $(0,1)$.
\begin{ass}\label{ass:A}
The linear operator $A:\HH\to\HH$ is the unbounded linear operator on $\HH$ with domain
\[
D(A)=H_0^1(0,1);\R) \cap H^2(0,1;\R),
\]
such that for all $x\in D(A)$, one has
\[
Ax=-x''.
\]
\end{ass}

For all $k\in\N$, set $\lambda_k=k^2\pi^2$ and $e_k=\sqrt{2}\sin\bigl(k\pi \cdot\bigr)$. Then $\bigl(e_k\bigr)_{k\in\N}$ is a complete orthonormal system  of the Hilbert space $\HH$, and for all $k\in\N$ one has $Ae_k=\lambda_ke_k$.

For all $\alpha\in\R^+$, the linear operator $A^\alpha$ is defined by the domain
\[
D(A^\alpha)=\{x\in\HH;~\sum_{k=1}^{\infty}\lambda_k^{2\alpha}\langle x,e_k\rangle^2<\infty\},
\]
and such that for all $x\in D(A^\alpha)$ one has
\[
A^\alpha x=\sum_{k=1}^{\infty}\lambda_k^\alpha \langle x,e_k \rangle e_k.
\]
In the sequel the notation
\[
\HH^\alpha=D(A^{\frac{\alpha}{2}}),\quad \forall~\alpha\in\R^+,
\]
is employed. The norm $\|\cdot\|_{\HH^\alpha}$ is defined by
\[
\|x\|_{\HH^\alpha}^2=\|A^{\frac{\alpha}{2}}x\|^2=\sum_{k\in\N}\lambda_k^{\alpha}\langle x,e_k\rangle^2,\qquad \forall~x\in\HH^\alpha.
\]

Moreover, for all $\alpha\in\R^+$, the linear operator $A^{-\alpha}$ is the bounded linear operator on $\HH$ defined by
\[
A^{-\alpha}x=\sum_{k=1}^{\infty}\lambda_k^{-\alpha} \langle x,e_k \rangle e_k,\quad \forall~x\in\HH.
\]

The linear operator $A$ generates a semigroup $\bigl(S(t)\bigr)_{t\ge 0}$, where for all $t\ge 0$ the linear operator $S(t)$ is defined by
\[
S(t)x=\sum_{k=1}^\infty e^{-\lambda_kt}\langle x,e_k\rangle e_k,\quad \forall~x\in\HH.
\]

Proposition~\ref{propo:semigroup} states standard properties of the semigroup $\bigl(S(t)\bigr)_{t\ge 0}$. Please see for instance~\cite{Yosida,Pruss}.

\begin{propo}\label{propo:semigroup}
The semigroup $\bigl(S(t)\bigr)_{t\ge 0}$ satisfies the following properties.

$\bullet$ For all $t\geq 0$, $S(t)$ is a bounded linear operator on $\HH$, and one has $\|S(t)\|_{{\mathcal L}(\HH)}\leq 1$.

$\bullet$ For all $t\geq 0$, $S(t)$ is a bounded linear operator on $\EE$, and one has $\|S(t)\|_{{\mathcal L}(\EE)}\leq 1$.

$\bullet$ For all $\alpha\ge 0$, there exists $C_\alpha\in(0,\infty)$ such that one has
\[
\|A^\alpha S(t)\|_{{\mathcal L}({\mathbb H})}\leq Ct^{-\alpha},\quad \forall~t>0.
\]
Moreover, for all $\rho\in [0,1]$, there exists $C_\rho\in(0,\infty)$ such that one has
\[
\|A^{-\rho}(I-S(t))\|_{{\mathcal L}({\mathbb H})}\leq Ct^\rho,\quad \forall~t\ge 0.  
\]

$\bullet$ For all $\rho\in [0,1]$, there exists $C_\rho\in(0,\infty)$ such that for all $t_2\ge t_1>0$ one has
\begin{align}\label{eq.11}
\|S(t_2)-S(t_1)\|_{{\mathcal L}({\mathbb H})}\leq C(t_2-t_1)^\rho t_1^{-\rho}.  
\end{align}
\end{propo}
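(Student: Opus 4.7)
The proof plan is to exploit the spectral decomposition $S(t)x=\sum_{k}e^{-\lambda_k t}\langle x,e_k\rangle e_k$ throughout, reducing each bound to an elementary scalar estimate on the sequence $(\lambda_k e^{-\lambda_k t})_{k\geq 1}$. Only the $E$-bound is not directly spectral and will require a different argument.

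For the first property, by Parseval's identity one has $\|S(t)x\|^2=\sum_{k}e^{-2\lambda_k t}\langle x,e_k\rangle^2\leq \sum_{k}\langle x,e_k\rangle^2=\|x\|^2$, since $\lambda_k=k^2\pi^2\geq 0$. For the second property (the only non-spectral step), I would use either the explicit Dirichlet heat kernel on $(0,1)$ together with its positivity and the identity $\int_0^1 p_t(\xi,\eta)\dd\eta\leq 1$, or invoke the maximum principle for the heat equation with homogeneous Dirichlet data. This is the step I expect to require the most care, since the eigenbasis argument gives $L^2$-contractivity for free, but $L^\infty$-contractivity genuinely uses positivity of the kernel.

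For the third property, the spectral calculus gives
\[
\|A^\alpha S(t)\|_{\mathcal{L}(\HH)}=\sup_{k\geq 1}\lambda_k^\alpha e^{-\lambda_k t}\leq \sup_{s\geq 0}s^\alpha e^{-st}=\left(\tfrac{\alpha}{e}\right)^\alpha t^{-\alpha},
\]
and similarly, using $1-e^{-s}\leq s^\rho$ for $s\geq 0$ and $\rho\in[0,1]$ (which follows from $1-e^{-s}\leq\min(s,1)\leq s^\rho$),
\[
\|A^{-\rho}(I-S(t))\|_{\mathcal{L}(\HH)}=\sup_{k\geq 1}\lambda_k^{-\rho}\bigl(1-e^{-\lambda_k t}\bigr)\leq \sup_{k\geq 1}\lambda_k^{-\rho}(\lambda_k t)^\rho=t^\rho.
\]

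For the fourth property, I would factor the difference as
\[
S(t_2)-S(t_1)=-S(t_1)\bigl(I-S(t_2-t_1)\bigr)=-\bigl(A^\rho S(t_1)\bigr)\bigl(A^{-\rho}(I-S(t_2-t_1))\bigr),
\]
using the commutation of $S$ with the fractional powers of $A$ on the relevant domain, and then apply the two bounds established in the previous step. This gives $\|S(t_2)-S(t_1)\|_{\mathcal{L}(\HH)}\leq C_\rho t_1^{-\rho}(t_2-t_1)^\rho$ directly, as claimed in~\eqref{eq.11}. No additional obstacle is expected beyond the careful handling of commutation, which is justified via the spectral representation on the dense subspace spanned by the $(e_k)$.
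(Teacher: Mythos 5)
Your proof is correct, but it is worth noting that the paper does not actually prove Proposition~\ref{propo:semigroup}: it declares the properties standard and defers to the references \cite{Yosida,Pruss}. Your proposal therefore supplies a genuinely self-contained argument where the paper offers only a citation. The spectral reductions are all sound: Parseval gives the $\HH$-contractivity; the identity $\|A^\alpha S(t)\|_{\mathcal{L}(\HH)}=\sup_{k}\lambda_k^\alpha e^{-\lambda_k t}\le \sup_{s\ge 0}s^\alpha e^{-st}=(\alpha/e)^\alpha t^{-\alpha}$ is exact for a diagonal operator; the bound $1-e^{-s}\le \min(s,1)\le s^\rho$ handles $A^{-\rho}(I-S(t))$; and the factorization $S(t_2)-S(t_1)=-\bigl(A^\rho S(t_1)\bigr)\bigl(A^{-\rho}(I-S(t_2-t_1))\bigr)$, legitimate since all operators involved are functions of $A$ and hence commute, combines the two previous estimates into~\eqref{eq.11} with the correct constant structure. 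You also correctly isolate the one step that spectral calculus cannot deliver: contractivity on $\EE={\mathcal C}([0,1];\R)$ requires positivity of the Dirichlet heat kernel (or the maximum principle), since the eigenfunction expansion only controls $L^2$ norms; your two suggested routes (kernel positivity with $\int_0^1 p_t(\xi,\eta)\dd\eta\le 1$, or the parabolic maximum principle) are both standard and adequate, though to make that step fully rigorous you would want to note that for $t>0$ the series defining $S(t)x$ converges uniformly, so $S(t)x\in\EE$ is indeed well defined for $x\in\EE$. What your approach buys is transparency and elementary constants; what the paper's citation buys is brevity and coverage of the general analytic-semigroup framework beyond this specific diagonalizable operator.
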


It is straightforward to check that the eigenfunctions $\{e_k;k\in\N\}$ satisfy the following properties (\cite{Prato}): for all $k\in\N$, one has $e_k\in\EE$, and there exists $C\in(0,\infty)$ such that for all $k\in\N$ one has
\[
\underset{\xi\in [0,1]}\max~|e_k(\xi)|\leq C, \quad \underset{\xi\in [0,1]}\max~|\nabla e_k(\xi)|\leq C\lambda_k^{1/2}.
\]
As a result, for all $\alpha\in[0,1]$, there exists $C_\alpha\in(0,\infty)$ such that one has
\begin{equation}\label{eq:Holder_ek}
|e_k(\xi_2)-e_k(\xi_1)|\le C_\alpha \lambda_k^{\frac{\alpha}{2}}|\xi_2-\xi_1|^\alpha,\quad \forall~\xi_1,\xi_2\in[0,1].
\end{equation}
Moreover, one has the following property: for all $\alpha>1/2$, one has $\HH^\alpha\subset \EE$, and there exists $C_\alpha\in(0,\infty)$ such that
\begin{equation}\label{eq:Sobolevembedding}
\|x\|_\EE\le C_\alpha \|x\|_{\HH^\alpha},\quad \forall~x\in\HH^\alpha.
\end{equation}

\subsection{Nonlinearities}

The first nonlinearity $F:\EE\to \EE$ is defined depending on a real-valued continuous mapping $f:\R\to\R$, such that one has
\[
F(x)(\xi)=f\bigl(x(\xi)\bigr),\qquad \forall~\xi\in[0,1],\quad \forall~x\in\EE.
\]
The mapping $f$ is not assumed to be globally Lipschitz continuous, instead it is assumed to be one-sided Lipschitz continuous.
\begin{ass}\label{ass:f}
There exists ${\rm L}_f\in[0,\infty)$ such that one has
\[
\bigl(z_2-z_1\bigr)\bigl(f(z_2)-f(z_1)\bigr)\le {\rm L}_f\bigl(z_2-z_1\bigr)^2,\quad \forall~z_1,z_2\in\R.
\]
Moreover, there exists $q\in\N$ such that one has
\[
|f(z_2)-f(z_1)|\le {\rm L}_f\bigl(1+|z_1|^{2q}+|z_2|^{2q}\bigr)|z_2-z_1|,\quad \forall~z_1,z_2\in\R.
\]
\end{ass}

For the construction of the scheme, let us introduce the flow $\bigl(\phi_{s}\bigr)_{s\ge 0}$ associated with the nonlinearity $f$: for any $z\in\R$, the mapping $s\ge 0\mapsto \phi_s(z)=z_s$ is the solution to the ordinary differential equation
\[
\dot{z}_s=f(z_s),\quad s\ge 0;\qquad z_0=z.
\]
Under Assumption~\ref{ass:f}, the flow is well-defined. Let us define
\[
\psi_s(z)=\frac{\phi_s(z)-z}{s},\qquad \forall~s>0, \forall~z\in\R,
\]
and denote also $\psi_0=f$.

\begin{lemma}\label{lem:flow}
There exist $q'\in\N$ and ${\rm C}\in(0,\infty)$ such that one has
\begin{align}
&\underset{s\in[0,1]}\sup~|\phi_{s}(z_2)-\phi_{s}(z_1)|\le e^{{\rm C}s}|z_2-z_1|,\quad \forall~z_1,z_2\in\R,\\
&\underset{s\in[0,1]}\sup~\bigl(z_2-z_1\bigr)\bigl(\psi_s(z_2)-\psi_s(z_1)\bigr)\le {\rm C}\bigl(z_2-z_1\bigr)^2,\quad \forall~z_1,z_2\in\R,\\
&\underset{s\in[0,1]}\sup~|\psi_{s}(z_2)-\psi_{s}(z_1)|\le {\rm C}\bigl(1+|z_1|^{2q'}+|z_2|^{2q'}\bigr),\quad \forall~z_1,z_2\in\R,\\
&|\psi_s(z)-\psi_0(z)|\le {\rm C}s(1+|z|^{2q'+1}),\quad \forall~s\ge 0, \forall~z\in\R.
\end{align}
\end{lemma}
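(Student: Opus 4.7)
The strategy is to work from the fundamental integral representation $\phi_s(z) = z + \int_0^s f(\phi_r(z))\dd r$, and to derive everything through Gronwall-type arguments anchored in Assumption~\ref{ass:f}. The essential preliminary step, which is not stated in the lemma but is needed for every inequality, is the a priori polynomial growth bound $|\phi_s(z)| \le C(1+|z|)$ uniformly for $s\in[0,1]$. This follows by computing $\tfrac{\dd}{\dd s}|\phi_s(z)|^2 = 2\phi_s(z)f(\phi_s(z))$, writing $z f(z) = z(f(z)-f(0)) + zf(0)$, and using Assumption~\ref{ass:f} applied with $z_1=0$ to obtain the differential inequality $\tfrac{\dd}{\dd s}|\phi_s(z)|^2 \le C(1+|\phi_s(z)|^2)$, followed by Gronwall's lemma. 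This also gives global well-posedness of the flow.

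With this in hand, the first inequality is obtained by setting $y_s = \phi_s(z_2)-\phi_s(z_1)$ and computing $\tfrac{\dd}{\dd s}y_s^2 = 2 y_s\bigl(f(\phi_s(z_2))-f(\phi_s(z_1))\bigr) \le 2{\rm L}_f y_s^2$, where the one-sided Lipschitz bound from Assumption~\ref{ass:f} is applied pointwise along the trajectories. Gronwall then yields $|y_s|\le e^{{\rm L}_f s}|z_2-z_1|$. The second inequality is a direct algebraic consequence: write
\[
(z_2-z_1)\bigl(\psi_s(z_2)-\psi_s(z_1)\bigr) = \frac{(z_2-z_1)\bigl(\phi_s(z_2)-\phi_s(z_1)\bigr) - (z_2-z_1)^2}{s},
\]
use Cauchy--Schwarz together with the first inequality to bound the numerator by $(e^{{\rm L}_f s}-1)(z_2-z_1)^2$, and note that $(e^{{\rm L}_f s}-1)/s$ is uniformly bounded on $(0,1]$; the limiting case $s=0$, where $\psi_0=f$, reduces directly to Assumption~\ref{ass:f}.

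For the third inequality, I would use $\psi_s(z_2)-\psi_s(z_1) = s^{-1}\int_0^s \bigl(f(\phi_r(z_2))-f(\phi_r(z_1))\bigr)\dd r$, apply the polynomial Lipschitz bound of Assumption~\ref{ass:f} to the integrand, and then use the preliminary growth estimate on $\phi_r$ together with inequality (1) on $|\phi_r(z_2)-\phi_r(z_1)|$; this yields the claimed bound (with the missing factor $|z_2-z_1|$ implicit, cf.\ Assumption~\ref{ass:f}) for a suitable $q'\ge q$ chosen large enough to absorb the growth powers produced by the flow. For the fourth inequality, write
\[
\psi_s(z) - \psi_0(z) = \frac{1}{s}\int_0^s \bigl(f(\phi_r(z))-f(z)\bigr)\dd r,
\]
bound the integrand using the polynomial Lipschitz property by $C(1+|z|^{2q}+|\phi_r(z)|^{2q})|\phi_r(z)-z|$, and control $|\phi_r(z)-z|\le \int_0^r|f(\phi_t(z))|\dd t \le C r (1+|z|^{2q+1})$ via the growth bound on $\phi$ and the linear growth of $f$ (also inherited from Assumption~\ref{ass:f}). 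Integrating over $r\in[0,s]$ and dividing by $s$ produces the factor $s$ with the polynomial weight.

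The main obstacle is not a deep analytical point but careful bookkeeping of polynomial degrees: one must choose $q'$ sufficiently large so that it absorbs the growth powers that accumulate through compositions of $f$ with $\phi_r$ and through the a priori bound on $\phi_r$. The only subtlety is the joint treatment of the limit $s\to 0^+$ and the prefactor $1/s$ appearing in $\psi_s$, which is always compensated by a factor $s$ produced either by $(e^{{\rm L}_f s}-1)$, by the integral over $[0,s]$, or by the smallness of $\phi_r(z)-z$ for small $r$.
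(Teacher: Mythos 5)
Your proof is correct; note that the paper itself gives no proof of Lemma~\ref{lem:flow} (it is stated without proof immediately after Assumption~\ref{ass:f}), so your argument supplies precisely what the paper leaves implicit. The structure is sound: the a priori bound $|\phi_s(z)|\le C(1+|z|)$ on $s\in[0,1]$, obtained from the one-sided Lipschitz condition applied at $z_1=0$ plus Gronwall, is indeed the indispensable preliminary step (it also gives forward well-posedness of the flow); inequality (1) follows by differentiating $y_s^2$ and applying the one-sided condition along the trajectories; your reduction of inequality (2) to inequality (1) via the identity $(z_2-z_1)\bigl(\psi_s(z_2)-\psi_s(z_1)\bigr)=s^{-1}\bigl[(z_2-z_1)\bigl(\phi_s(z_2)-\phi_s(z_1)\bigr)-(z_2-z_1)^2\bigr]$ together with the boundedness of $(e^{{\rm L}_fs}-1)/s$ on $(0,1]$ is clean, and it neatly avoids the trap of trying to apply the one-sided condition to the mismatched pair $(z_2-z_1)$ and $f(\phi_r(z_2))-f(\phi_r(z_1))$; inequalities (3) and (4) then follow from the representation $\psi_s(z)=s^{-1}\int_0^s f(\phi_r(z))\dd r$ with the polynomial bookkeeping you describe, taking for instance $q'\ge 2q$.

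Three remarks. First, you are right that the stated inequality (3) is missing the factor $|z_2-z_1|$: your proof yields the Lipschitz-type form ${\rm C}\bigl(1+|z_1|^{2q'}+|z_2|^{2q'}\bigr)|z_2-z_1|$, which is what is actually invoked later in the proofs of Propositions~\ref{propo:momentbounds} and~\ref{propo:errorXtildeXnum}, so reading it as a typo is correct. Second, the paper asserts inequality (4) for all $s\ge 0$, whereas your constants (coming from the growth bound on $\phi_r$ for $r\in[0,1]$) give it only for $s\in[0,1]$; this restriction is not a gap in your argument but is in fact unavoidable: for $f(z)=z$, which satisfies Assumption~\ref{ass:f}, one has $\psi_s(z)-\psi_0(z)=z\bigl(\tfrac{e^s-1}{s}-1\bigr)$, which grows like $|z|e^s/s$ and cannot be bounded by ${\rm C}s\bigl(1+|z|^{2q'+1}\bigr)$ uniformly in $s\ge 0$; since the lemma is only applied with $s=\Delta t\in(0,1)$, the restriction is harmless. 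Third, a slip of terminology: you invoke the ``linear growth'' of $f$, but under Assumption~\ref{ass:f} the mapping $f$ has only polynomial growth of degree $2q+1$; the displayed bound $|\phi_r(z)-z|\le Cr\bigl(1+|z|^{2q+1}\bigr)$ that you actually use is the correct one, so this does not affect the argument.
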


For the implementation of the scheme, the flow needs to be known. This is satisfied for instance for polynomial nonlinearities $f(z)=-z^{2q+1}$, for some integer $q\in\N$: in that case one has
\[
\Phi_s(z)=\frac{z}{\bigl(1+2qz^{2q}s\bigr)^{\frac{1}{2q}}},\qquad \forall~s\ge 0, \forall~z\in\R.
\]

For any time $s\ge 0$, the nonlinearities $\Phi_s:E\to E$ and $\Psi_s:E\to E$ are defined by
\[
\Phi_s(x)(\xi)=\phi_{s}\bigl(x(\xi)\bigr),\quad \Psi_s(x)(\xi)=\psi_{s}\bigl(x(\xi)\bigr),\qquad \forall~\xi\in[0,1], \forall~x\in E.
\]

The second linearity $G$ is defined similarly: there exists a real-valued continuous mapping $g:\R\to\R$, such that one has
\[
G(x)(\xi)=g\bigl(x(\xi)\bigr),\qquad \forall~\xi\in[0,1],\forall~x\in\EE.
\]
Contrary to the mapping $f$, it is assumed that the mapping $g$ is globally Lipschitz continuous.
\begin{ass}\label{ass:g}
There exists ${\rm L}_g\in[0,\infty)$ such that one has
\[
\big|g(z_2)-g(z_1)\big|\le {\rm L}_g|z_2-z_1|,\quad \forall~z_1,z_2\in\R.
\]
\end{ass}
As a result, the nonlinearity can also be seen as a mapping $G:\HH\to\HH$, which is globally Lipschitz continuous.

For instance, one may choose $g(z)=z$ or $g(z)=\sin(z)$.

\subsection{Fractional Brownian motion}

Let $H\in(0,1)$ denote the so-called Hurst index. Consider a sequence $\bigl(\beta_k^H\bigr)_{k\in\N}$ of independent real-valued fractional Brownian motions with Hurst index $H$ defined on a complete probability space $\bigl(\Omega,\mathcal{F},\mathbb{P}\bigr)$.

Let us recall that for each $k\in\N$, the process $\bigl(\beta_k^H(t)\bigr)_{t\ge 0}$ is a Gaussian process characterized by its mean $t\ge 0\mapsto \E[\beta_k^H(t)]$ and its covariance function $t,s\ge 0\mapsto R^H(t,s)=\E[\beta_k^H(t)\beta_k^H(s)]$, which are given by
\begin{align*}
&\E[\beta_k^H(t)]=0,\quad \forall~t\ge 0,\\
&\E[\beta_k^H(t)\beta_k^H(s)]=\frac{1}{2}\bigl(t^{2H}+s^{2H}-|t-s|^{2H}\bigr),\quad \forall~t,s\ge 0.
\end{align*}
In particular, one has $\beta_k^H(0)=0$.

Let $\bigl(\beta_k\bigr)_{k\in\N}$ be a sequence of standard real-valued Brownian motions, which correspond to choosing the Hurst index $H=1/2$. When $H\neq 1/2$, for each $k\in\N$ the fractional Brownian motion $\beta_k^H$ may be defined as
\[
\beta_k^H(t)=\int_{0}^{t}\mathcal{K}_H(t,s)\dd \beta_k(s),\quad \forall~t\ge 0,
\]
where $\mathcal{K}_H$ is a kernel, defined differently whether $H\in(1/2,1)$ or $H\in(0,1/2)$. Precisely, for all $H\in(0,1)\setminus\{1/2\}$, set
\[
c_H
=
\left\lbrace
\begin{aligned}
&\Bigl(\frac{H\bigl(2H-1\bigr)}{B\bigl(2-2H,H-\frac12\bigr)}\Bigr)^{1/2},\qquad H\in(\frac12,1),\\
&-\Bigl(\frac{H\bigl(1-2H\bigr)}{2B\bigl(1-2H,H+\frac12\bigr)}\Bigr)^{1/2},\qquad H\in(0,\frac12),
\end{aligned}
\right.
\]
then for all $t>s>0$ one has
\[
K_H(t,s)
=
\left\lbrace
\begin{aligned}
&c_H\int_{s}^{t}\frac{\tau^{H-\frac12}}{s^{H-\frac12}}(\tau-s)^{H-\frac{3}{2}}\dd \tau,\qquad H\in(\frac12,1);\\
&c_H\Bigl(\frac{t^{H-\frac12}}{s^{H-\frac12}}\frac{(t-s)^{H-\frac12}}{H-\frac12}-s^{\frac12-H}\int_{s}^{t}(\tau-s)^{H-\frac12}\tau^{H-\frac32}\dd \tau\Bigr),\qquad H\in(0,\frac12).
\end{aligned}
\right.
\]
Observe that for all values of $H\in(0,1)\setminus\{1/2\}$ one has
\[
\frac{\partial K_H(t,s)}{\partial t}=c_H\frac{t^{H-\frac12}}{s^{H-\frac12}}(t-s)^{H-\frac32},\quad \forall~t>s>0.
\]

Let $T\in(0,\infty)$, then the stochastic integral
\[
\int_{0}^{T}\phi(t)\dd\beta_k^H(t)
\]
may be defined in different ways. In this manuscript, we only need with deterministic integrands $\phi:[0,T]\to\R$. Let us first assume that $\phi$ is of class $\mathcal{C}^1$. We define
\[
\int_0^T\phi(t)\dd\beta_k^H(t)=\phi(T)\beta_k^H(T)-\int_{0}^{T}\phi'(t)\beta_k^H(t)\dd t.
\]
Given the representation of the fractional Brownian motion $\beta_k^H$ in terms of a standard Brownian motion $\beta_k$ depending on the kernel $K_H$, one has the expression
\[
\int_{0}^{T}\phi(t)\dd\beta_k^H(t)=\int_{0}^{T}\mathcal{K}_{H,T}^\star\phi (t)\dd \beta_k(t),
\]
where the linear operator $\mathcal{K}_{H,T}^\star$ is given by
\[
\mathcal{K}_{H,T}^\star\phi (t)=\phi(T)K_H(T,t)-\int_{t}^{T}\phi'(s)K_H(s,t)\dd s,\quad \forall~t\in[0,T].
\]
It is important to define the stochastic integral for integrands $\phi$ which are not of class $\mathcal{C}^1$. This is made possible using alternative expression for $\mathcal{K}_{H,T}^\star\phi$. Using an integration by parts arguments, one obtains for all $t\in[0,T]$
\[
\mathcal{K}_{H,T}^\star\phi(t)
=
\left\lbrace
\begin{aligned}
&\int_{t}^{T}\phi(s)\frac{\partial K_H(s,t)}{\partial t}\dd s,\qquad H\in(\frac12,1);\\
&\phi(t)K_H(T,t)+\int_{t}^{T}\bigl[\phi(s)-\phi(t)\bigr]\frac{\partial K_H(s,t)}{\partial t}\dd s,\qquad H\in(0,\frac12).
\end{aligned}
\right.
\]
Applying the It\^o isometry property for the It\^o stochastic integral with respect to standard Brownian motion, one obtains the isometry property
\begin{equation}\label{eq:Ito}
\E[\big|\int_{0}^{T}\phi(t)\dd\beta_k^H(t)\big|^2]=\int_{0}^{T}\big|\mathcal{K}_{H,T}^\star\phi (t)\big|^2 \dd t.
\end{equation}
It is straightforward to generalize the definition of the stochastic integral to integrands $\phi$ such that the right-hand side of the above isometry property is finite.

When $H\in(\frac12,1)$, one obtains the following expression for the second-order moment:
\begin{equation}\label{eq:Ito+}
\int_{0}^{T}\big|\mathcal{K}_{H,T}^\star\phi (t)\big|^2 \dd t=H(2H-1)\int_{0}^{T}\int_{0}^{T}\phi(s_1)\phi(s_2)|s_2-s_1|^{2H-2}\dd s_1 \dd s_2.
\end{equation}
When $H\in(0,\frac12)$, there is no similar simple expressions. Instead, the following alternative expression for $\mathcal{K}_{H,T}^\star\phi (t)$ is employed: one has
\begin{equation}\label{eq:KH-}
\mathcal{K}_{H,T}^\star\phi (t)=c_H\Bigl(\frac{\phi(t)}{(H-\frac12)(T-t)^{\frac12-H}}-t^{\frac12-H}\int_{t}^{T}\frac{t^{H-\frac12}\phi(t)-s^{H-\frac12}\phi(s)}{(s-t)^{\frac32-H}}\dd s\Bigr).
\end{equation}

To conclude this section, let us introduce the cylindrical fractional Brownian motion $\bigl(B^H(t)\bigr)_{t\ge 0}$ defined by
\[
B^H(t)=\sum_{k\in\N}\beta_k^H(t)e_k.
\]
The above series does not converge in $\HH$, however it is possible to show that $A^{-\alpha}B^H(t)$ converges if $\alpha>1/4$: indeed, one has
\[
\E[\|A^{-\alpha}B^H(t)\|^2]=t^{2H}\sum_{k\in\N}\lambda_k^{-2\alpha}<\infty,\qquad \forall~\alpha>1/4, \forall~t\ge 0.
\]
More generally, if $\theta\in\mathcal{L}_2(\HH)$ is an Hilbert--Schmidt linear operator on $\HH$, then the random variable $\theta B^H(t)$ is a well-defined $\HH$-valued Gaussian random variable for all $t\ge 0$, with
\[
\E[\|\theta B^H(t)\|^2]=t^{2H}\|\theta\|_{\mathcal{L}_2(\HH)}^2.
\]
For an integrand $\Theta:[0,T]\to\mathcal{L}_2(\HH)$, one defines the $\HH$-valued stochastic integral with respect to the cylindrical fractional Brownian motion, as
\[
\int_{0}^{T}\Theta(t)\dd B^H(t)=\sum_{k,\ell\in\N}\int_{0}^{T}\langle\Theta(t)e_\ell,e_k\rangle \dd \beta_\ell^H(t) e_k.
\]

More details about fBm can be refered to \cite{4,Hu-2005,Nualart-2006}. 
\section{Preliminary properties}\label{S.3}

\subsection{Fractional Ornstein--Uhlenbeck process}

Let us introduce the process $\bigl(Z^H(t)\bigr)_{t\ge 0}$ defined by
\begin{equation}\label{eq:defZH}
Z^H(t)=\int_{0}^{t}S(t-s)\dd B^H(s),\quad \forall~t\ge 0,
\end{equation}
which is considered as the mild solution to the stochastic evolution equation
\begin{equation}\label{eq:SPDEZH}
\left\lbrace
\begin{aligned}
&\dd Z^H(t)+AZ^H(t)\dd t=\dd B^H(t),\\
&Z^H(0)=0.
\end{aligned}
\right.
\end{equation}
Below we show that $\bigl(Z^H(t)\bigr)_{t\ge 0}$ is well-defined with values in $\HH$ if and only if $H>1/4$. Moreover, we provide moment bounds in $\HH^\alpha$ for appropriate values of $\alpha$ and in $\EE$, and temporal regularity results.

Note one can express $Z^H(t)$ as a combination of real-valued Gaussian random variables $\Z_k^H(t)$: for all $t\ge 0$, one has
\begin{equation}\label{eq:decompZH}
Z^H(t)=\sum_{k\in\N}\Z_k^H(t)e_k,
\end{equation}
where for all $k\in\N$ the real-valued fractional Ornstein--Uhlenbeck process $\bigl(\Z_k^H(t)\bigr)_{t\ge 0}$ is defined by
\[
\Z_k^H(t)=\sum_{\ell\in\N}\int_{0}^{T}\langle S(t-s)e_\ell,e_k\rangle \dd \beta_\ell^H(s)=\int_{0}^{T}e^{-\lambda_k(t-s)}\dd\beta_k^H(s).
\]
Therefore the analysis proceeds by proving moment bounds and temporal regularity properties for the real-valued fractional Ornstein--Uhlenbeck processes $\bigl(\Z_k^H(t)\bigr)_{t\ge 0}$. One needs to pay attention to the dependence with respect to $\lambda_k$. We refer to Appendices~\ref{app:+} and~\ref{app:-} for results on real-valued fractional Ornstein--Uhlenbeck processes when $H\in(1/2,1)$ and $H\in(0,1/2)$ respectively.

Verifying that the process $\bigl(Z^H(t)\bigr)_{t\ge 0}$ takes values in the Hilbert space $\HH$ requires to assume that $H>1/4$.
\begin{propo}\label{propo:ZH}
For all $H\in(1/4,1/2)\cup(1/2,1)$, for all $t\ge 0$, almost surely $Z^H(t)\in\HH$, and for all $p\in\N$ one has
\begin{equation}\label{eq:momentsZH}
\underset{t\ge 0}\sup~\E[\|Z^H(t)\|^{2p}]<\infty.
\end{equation}
Moreover, for all $\alpha\in(0,2H-1/2)$, for all $t\ge 0$, almost surely $Z^H(t)\in\HH^\alpha$, and for all $p\in\N$ one has
\begin{equation}\label{eq:momentsZHalpha}
\underset{t\ge 0}\sup~\E[\|Z^H(t)\|_{\HH^\alpha}^{2p}]<\infty.
\end{equation}
\end{propo}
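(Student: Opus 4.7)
The plan is to exploit the spectral decomposition~\eqref{eq:decompZH} to reduce everything to uniform-in-$t$ second moment estimates for the real-valued fractional Ornstein--Uhlenbeck processes $\mathcal{Z}_k^H(t)$ with sharp dependence on $\lambda_k$. Since $\bigl(e_k\bigr)_{k\in\N}$ is an orthonormal basis of $\HH$ and the $\mathcal{Z}_k^H(t)$ are centered Gaussian, the key identity is
\[
\E\|Z^H(t)\|_{\HH^\alpha}^2=\sum_{k\in\N}\lambda_k^\alpha\,\E|\mathcal{Z}_k^H(t)|^2,
\]
which also handles $\alpha=0$. The heart of the proof is therefore to show the bound
\[
\underset{t\ge 0}\sup~\E|\mathcal{Z}_k^H(t)|^2\le C_H\,\lambda_k^{-2H},\qquad \forall~k\in\N.
\]
Once this is granted, summability follows from $\lambda_k=k^2\pi^2$: the series $\sum_k\lambda_k^{\alpha-2H}$ converges if and only if $2(2H-\alpha)>1$, i.e. $\alpha<2H-\tfrac12$, which reduces to $H>\tfrac14$ in the case $\alpha=0$. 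This both justifies well-posedness in $\HH$ and yields~\eqref{eq:momentsZH} and~\eqref{eq:momentsZHalpha} for $p=1$.

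To establish the key bound, I would treat the two Hurst regimes separately. For $H\in(\tfrac12,1)$, applying the It\^o isometry~\eqref{eq:Ito} together with the explicit second-moment representation~\eqref{eq:Ito+} to $\phi(s)=e^{-\lambda_k(t-s)}\mathbf{1}_{[0,t]}(s)$ and performing the change of variables $u_i=t-s_i$ gives
\[
\E|\mathcal{Z}_k^H(t)|^2=H(2H-1)\int_0^t\int_0^t e^{-\lambda_k u_1}e^{-\lambda_k u_2}|u_2-u_1|^{2H-2}\,\dd u_1\,\dd u_2.
\]
Extending the integration domain to $(0,\infty)^2$ and rescaling $v_i=\lambda_k u_i$ extracts the factor $\lambda_k^{-2H}$, while the resulting dimensionless double integral is finite because $2H-2>-1$. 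For $H\in(\tfrac14,\tfrac12)$, the elementary double-integral representation is unavailable; the argument instead relies on the It\^o isometry~\eqref{eq:Ito} with the alternative kernel expression~\eqref{eq:KH-}, splitting $\mathcal{K}_{H,t}^\star\phi$ into the boundary contribution $\phi(t)K_H(T,t)$ and the regularized singular integral. The bound of order $\lambda_k^{-2H}$ is then obtained by a careful tracking of the dependence of each piece on $\lambda_k$, which is precisely the analysis carried out in Appendix~\ref{app:-} for real-valued fractional Ornstein--Uhlenbeck processes.

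The hardest step is this last one: in the regime $H<\tfrac12$ the kernel is singular at the endpoint $s=t$ (since $1/2-H>0$), so one must combine the boundary term and the integral term cleverly to avoid an artificial divergence and to recover the correct scaling $\lambda_k^{-2H}$ uniformly in $t\ge 0$. Everything else is essentially bookkeeping; the substantive work is encapsulated in the appendix.

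Finally, to upgrade from $p=1$ to arbitrary $p\in\N$, I would invoke the Gaussianity of $Z^H(t)$: for each fixed $t$, $Z^H(t)$ is a centered Gaussian random variable with values in the separable Hilbert space $\HH^\alpha$ (once $\alpha<2H-\tfrac12$), and Fernique's theorem (equivalently, the equivalence of Gaussian moments in Hilbert space) yields
\[
\E\|Z^H(t)\|_{\HH^\alpha}^{2p}\le C_p\bigl(\E\|Z^H(t)\|_{\HH^\alpha}^2\bigr)^p,
\]
with a constant $C_p$ independent of $t$. Combined with the uniform second-moment bound established above, this gives~\eqref{eq:momentsZH} and~\eqref{eq:momentsZHalpha} for all $p\in\N$ and concludes the proof.
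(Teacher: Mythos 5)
Your proposal is correct and takes essentially the same route as the paper: both reduce via the spectral decomposition~\eqref{eq:decompZH} to the uniform bound $\sup_{t\ge 0}\E[|\mathcal{Z}_k^H(t)|^2]\le C_H\lambda_k^{-2H}$, where your direct double-integral computation for $H\in(1/2,1)$ is precisely the proof of Lemma~\ref{lem:moment1DH+} and you defer the case $H\in(1/4,1/2)$ to the analysis of Appendix~\ref{app:-} (Lemma~\ref{lem:moment1DH-}) exactly as the paper does. The conclusion by the summability criterion $\alpha<2H-\tfrac12$ (with $\alpha=0$ giving $H>\tfrac14$) and by equivalence of Gaussian moments to pass from $p=1$ to general $p$ also matches the paper's argument.
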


\begin{proof}[Proof of Proposition~\ref{propo:ZH}]
Since the considered random variables are Gaussian, it suffices to deal with the case $p=1$.

Owing to the decomposition~\eqref{eq:decompZH} of $Z^H(t)$, for all $t\ge 0$ one has
\[
\E[\|Z^H(t)\|^2]=\sum_{k\in\N}\E[|\mathcal{Z}_k^{H}(t)|^2].
\]
Applying the inequality~\eqref{eq:moment1DH+} from Lemma~\ref{lem:moment1DH+} when $H\in(1/2,1)$ or the inequality~\eqref{eq:moment1DH-} from Lemma~\ref{lem:moment1DH-} when $H\in(1/4,1/2)$, for all $k\in\N$ one has
\[
\underset{t\ge 0}\sup~\E[|\mathcal{Z}_k^{H}(t)|^2]\le C_H\lambda_k^{-2H}.
\]
To obtain the moment bounds~\eqref{eq:momentsZH}, it suffices to observe that one has
\[
\sum_{k\in\N}\lambda_k^{-2H}<\infty\quad\Longleftrightarrow\quad H>1/4.
\]
Next, for all $\alpha\ge 0$ and all $t\ge 0$, one has
\[
\E[\|Z^H(t)\|_{\HH^\alpha}^{2p}]=\E[\|A^{\frac{\alpha}{2}}Z^H(t)\|^2]=\sum_{k\in\N}\lambda_k^{\alpha}\E[|\mathcal{Z}_k^{H}(t)|^2]
\le C_H\sum_{k\in\N}\lambda_k^{\alpha-2H},
\]
and one has
\[
\sum_{k\in\N}\lambda_k^{\alpha-2H}<\infty\quad\Longleftrightarrow\quad 2H-\alpha>1/2.
\]
This concludes the proof of Proposition~\ref{propo:ZH}.
\end{proof}

Next, let us provide moment bounds of $Z^H(t)$ in the Banach space $\EE$. When $H\in(1/2,1)$, this is straightforward consequence of Proposition~\ref{propo:ZH} due to the embedding property~\eqref{eq:Sobolevembedding}. When $H\in(1/4,1/2)$, one needs to use a version of the Kolmogorov regularity criterion.

\begin{propo}\label{propo:ZH_E}
For all $H\in(1/4,1/2)\cup(1/2,1)$, for all $t\ge 0$, almost surely $Z^H(t)\in\EE$. Moreover, for all $p\in\N$ one has
\begin{equation}\label{eq:momentsZH_E}
\underset{t\ge 0}\sup~\E[\|Z^H(t)\|_\EE^{2p}]<\infty.
\end{equation}
\end{propo}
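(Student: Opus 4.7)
My plan is to split the argument into the two regimes $H\in(1/2,1)$ and $H\in(1/4,1/2)$. The key obstruction is the mismatch between the $\HH^\alpha$-regularity available from Proposition~\ref{propo:ZH}, which demands $\alpha<2H-1/2$, and the Sobolev embedding $\HH^\alpha\subset\EE$ in~\eqref{eq:Sobolevembedding}, which demands $\alpha>1/2$. The two are simultaneously satisfiable only when $H>1/2$, so the second regime will require a different tool.

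\textbf{Case $H\in(1/2,1)$.} Since $2H-1/2>1/2$, I would pick any $\alpha\in(1/2,2H-1/2)$ and combine~\eqref{eq:Sobolevembedding} with Proposition~\ref{propo:ZH}, obtaining $\|Z^H(t)\|_\EE\le C_\alpha\|Z^H(t)\|_{\HH^\alpha}$ pointwise, so that~\eqref{eq:momentsZH_E} is an immediate consequence of~\eqref{eq:momentsZHalpha}, with the supremum in $t$ preserved.

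\textbf{Case $H\in(1/4,1/2)$.} Here I would forget about $\HH^\alpha$ and work with the random field $\xi\mapsto Z^H(t)(\xi)$ directly, via Kolmogorov's continuity criterion. Fix $t\ge 0$ and $\xi_1,\xi_2\in[0,1]$. Using the decomposition~\eqref{eq:decompZH} and the independence of the $\beta_k^H$, the increment
\[
Z^H(t)(\xi_2)-Z^H(t)(\xi_1)=\sum_{k\in\N}\mathcal{Z}_k^H(t)\bigl(e_k(\xi_2)-e_k(\xi_1)\bigr)
\]
is a centered real-valued Gaussian variable, with variance
\[
\sigma^2(\xi_1,\xi_2)=\sum_{k\in\N}\E\bigl[|\mathcal{Z}_k^H(t)|^2\bigr]\bigl(e_k(\xi_2)-e_k(\xi_1)\bigr)^2.
\]
I would combine the pointwise bound $\E[|\mathcal{Z}_k^H(t)|^2]\le C_H\lambda_k^{-2H}$ from Lemma~\ref{lem:moment1DH-} with the Hölder estimate~\eqref{eq:Holder_ek} applied at an exponent $\theta\in(0,1]$, squared, to obtain
\[
\sigma^2(\xi_1,\xi_2)\le C_{H,\theta}|\xi_2-\xi_1|^{2\theta}\sum_{k\in\N}\lambda_k^{\theta-2H}.
\]
The series converges if and only if $\theta<2H-1/2$, and such a $\theta>0$ exists precisely because $H>1/4$. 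By Gaussianity one immediately lifts to all moments, $\E[|Z^H(t)(\xi_2)-Z^H(t)(\xi_1)|^{2p}]\le C_{p,H,\theta}|\xi_2-\xi_1|^{2p\theta}$, uniformly in $t$. Taking $p$ large enough that $2p\theta>1$ and noting that $Z^H(t)(0)=0$ since every $e_k$ vanishes at the endpoints, Kolmogorov's criterion yields a continuous modification and the desired moment bound~\eqref{eq:momentsZH_E} via the control of the Hölder seminorm, with the supremum in $t\ge 0$ inherited from the uniformity of the variance estimate.

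The main obstacle is the calibration of the interpolation exponent $\theta$: I need $\theta>0$ to run Kolmogorov, $\theta\le 1$ to apply~\eqref{eq:Holder_ek}, and $\theta<2H-1/2$ to ensure summability of $\sum_k\lambda_k^{\theta-2H}$. The simultaneous feasibility of these three constraints \emph{is} the assumption $H>1/4$, and it is reassuring that the threshold matches the one appearing in Proposition~\ref{propo:ZH}. A minor book-keeping point is to verify that the variance estimate from Lemma~\ref{lem:moment1DH-} is genuinely uniform in $t\ge 0$ so that the supremum in~\eqref{eq:momentsZH_E} is obtained rather than merely finiteness at each time.
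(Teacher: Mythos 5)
Your proposal is correct and follows essentially the same route as the paper: the Sobolev embedding $\HH^\alpha\subset\EE$ combined with Proposition~\ref{propo:ZH} for $H\in(1/2,1)$, and the Kolmogorov continuity criterion applied to the spatial increments $Z^H(t,\xi_2)-Z^H(t,\xi_1)$, using the eigenfunction H\"older bound~\eqref{eq:Holder_ek}, the uniform-in-$t$ variance bound $\E[|\mathcal{Z}_k^H(t)|^2]\le C_H\lambda_k^{-2H}$, Gaussianity to lift to higher moments, and the vanishing boundary value $Z^H(t,0)=0$, for $H\in(1/4,1/2)$. The calibration of your exponent $\theta$ (called $\alpha$ in the paper) and the observation that its feasibility is exactly the condition $H>1/4$ match the paper's argument.
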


\begin{proof}[Proof of Proposition~\ref{propo:ZH_E}]
First, assume that $H\in(1/2,1)$. In that case, one has $2H-1/2>1/2$, therefore one may choose $\alpha\in(1/2,2H-1/2)$, and combining the embedding property~\eqref{eq:Sobolevembedding} and the moment bounds~\eqref{eq:momentsZH} from Proposition~\ref{propo:ZH}, one obtains
\[
\underset{t\ge 0}\sup~\E[\|Z^H(t)\|_\EE^{2p}]\le C_{\alpha}^{2p}\underset{t\ge 0}\sup~\E[\|Z^H(t)\|_{\HH^\alpha}^{2p}]<\infty.
\]

Second, assume that $H\in(1/4,1/2)$. For all $t\ge 0$ and all $\xi\in[0,1]$, one has
\[
Z^H(t,\xi)=\sum_{k\in\N}\mathcal{Z}_k^H(t)e_k(\xi).
\]
Observe that one has $Z^H(t,0)=Z^H(t,1)=0$ for all $t\ge 0$.

Let $\alpha\in(0,2H-1/2)$, then using the inequality~\eqref{eq:Holder_ek}, for all $\xi_1,\xi_2\in[0,1]$, one has for all $t\ge 0$
\begin{align*}
\E[\|Z^H(t,\xi_2)-Z^H(t,\xi_1)\|^2]&=\sum_{k\in\N}\E[|\mathcal{Z}_k^H(t)|^2]\big|e_k(\xi_2)-e_k(\xi_1)\big|^2\\
&\le C_\alpha \sum_{k\in\N}\lambda_k^{\alpha-2H}|\xi_2-\xi_1|^{2\alpha}\\
&\le C_{H,\alpha}|\xi_2-\xi_1|^{2\alpha},
\end{align*}
with $C_{H,\alpha}\in(0,\infty)$.

Since the random variable $Z^H(t,\xi_2)-Z^H(t,\xi_1)$ is Gaussian, for all $p\in\N$ there exists $C_{p,H,\alpha}\in(0,\infty)$ such that
\[
\underset{t\ge 0}\sup~\E[\|Z^H(t,\xi_2)-Z^H(t,\xi_1)\|^{2p}]\le C_{p,H,\alpha}|\xi_2-\xi_1|^{2p\alpha}.
\]
Choosing a sufficiently large $p_\alpha\in\N$, such that $2p_\alpha\alpha>1$, one may apply the Kolmogorov regularity criterion: for all $p\in\N$ such that $p\ge p_\alpha$, there exists $C_{p,H,\alpha}\in(0,\infty)$ such that
\[
\underset{t\ge 0}\sup~\E\bigl[\underset{\xi_1,\xi_2\in[0,1], \xi_1\neq \xi_2}\sup~\frac{\|Z^H(t,\xi_2)-Z^H(t,\xi_1)\|^{2p}}{|\xi_2-\xi_1|^{2p\alpha-1}}\bigr]<\infty.
\]
In particular, for all $t\ge 0$ one has $Z^H(t)\in\EE$. Choosing $\xi_1=0$ and $\xi_2=\xi$ in the above, and recalling that $Z^H(t,0)=0$ for all $t\ge 0$, for all $p\ge p_\alpha$ one obtains
\[
\underset{t\ge 0}\sup~\E\bigl[\underset{\xi\in[0,1]}\sup~\|Z^H(t,\xi)\|^{2p}]<\infty.
\]
Therefore one obtains the moment bounds~\eqref{eq:momentsZH_E} and the proof of Proposition~\ref{propo:ZH_E} is completed.
\end{proof}

Finally, let us provide temporal regularity properties for the process $\bigl(Z^H(t)\bigr)_{t\ge 0}$.
\begin{propo}\label{propo:ZHregul}
For all $p\in\N$, $H\in(1/4,1/2)\cup(1/2,1)$ and $\alpha\in(0,2H-1/2)$, there exists $C_{p,H,\alpha}\in(0,\infty)$ such that one has
\begin{equation}\label{eq:regulZH}
\E[\|Z^H(t_2)-Z^H(t_1)\|^{2p}]\le C_{p,H,\alpha}|t_2-t_1|^{p\alpha},\qquad \forall~t_2\ge t_1\ge 0.
\end{equation}
\end{propo}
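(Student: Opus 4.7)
The plan is to reduce the infinite-dimensional, higher-moment regularity statement to a one-dimensional second-moment regularity statement for each real-valued fractional Ornstein--Uhlenbeck process $\mathcal{Z}_k^H$, for which the appendices provide quantitative bounds with the correct dependence on $\lambda_k$.

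First, since the increments $\beta_\ell^H(t_2)-\beta_\ell^H(t_1)$ are independent centered Gaussian random variables across $\ell\in\N$, the $\HH$-valued random variable $Z^H(t_2)-Z^H(t_1)$ is Gaussian, and by Fernique's theorem (or the equivalence of Gaussian moments in a Hilbert space), for every $p\in\N$ there exists $c_p\in(0,\infty)$ such that
\[
\E[\|Z^H(t_2)-Z^H(t_1)\|^{2p}]\le c_p\bigl(\E[\|Z^H(t_2)-Z^H(t_1)\|^2]\bigr)^p.
\]
It is therefore enough to prove the bound $\E[\|Z^H(t_2)-Z^H(t_1)\|^2]\le C_{H,\alpha}|t_2-t_1|^{\alpha}$ for any $\alpha\in(0,2H-1/2)$, from which the $2p$-moment statement follows by raising to the power $p$.

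Next, I use the expansion~\eqref{eq:decompZH} and the independence of the $\mathcal{Z}_k^H$, which yields
\[
\E[\|Z^H(t_2)-Z^H(t_1)\|^2]=\sum_{k\in\N}\E[|\mathcal{Z}_k^H(t_2)-\mathcal{Z}_k^H(t_1)|^2].
\]
For each $k$, two bounds are available. On the one hand, Lemma~\ref{lem:moment1DH+} (if $H>1/2$) or Lemma~\ref{lem:moment1DH-} (if $H<1/2$) give the uniform bound $\E[|\mathcal{Z}_k^H(t_2)-\mathcal{Z}_k^H(t_1)|^2]\le 2\sup_t\E[|\mathcal{Z}_k^H(t)|^2]\le C_H\lambda_k^{-2H}$. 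On the other hand, the corresponding regularity lemmas in Appendix~\ref{app:+} and Appendix~\ref{app:-} will provide (or can be derived from) a temporal H\"older-type estimate of the form $\E[|\mathcal{Z}_k^H(t_2)-\mathcal{Z}_k^H(t_1)|^2]\le C_H|t_2-t_1|^{2H}$, obtained by splitting
\[
\mathcal{Z}_k^H(t_2)-\mathcal{Z}_k^H(t_1)=\bigl(e^{-\lambda_k(t_2-t_1)}-1\bigr)\mathcal{Z}_k^H(t_1)+\int_{t_1}^{t_2}e^{-\lambda_k(t_2-s)}\dd\beta_k^H(s),
\]
and bounding the two pieces using~\eqref{eq:Ito+} respectively~\eqref{eq:KH-}. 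Combining these two bounds via $\min(A,B)\le A^{1-\theta}B^{\theta}$ with $\theta=\alpha/(2H)\in(0,1)$ yields
\[
\E[|\mathcal{Z}_k^H(t_2)-\mathcal{Z}_k^H(t_1)|^2]\le C_H\lambda_k^{-2H+\alpha}|t_2-t_1|^{\alpha},
\]
and summing over $k\in\N$ gives a finite constant provided $2H-\alpha>1/2$, which is exactly the assumption $\alpha<2H-1/2$.

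The main obstacle I expect is the regime $H\in(1/4,1/2)$, where the fractional noise is less regular and the clean covariance identity~\eqref{eq:Ito+} is no longer available. There one must work with the integrand representation~\eqref{eq:KH-} of $\mathcal{K}_{H,T}^\star\phi$ and control the singular kernel $(s-t)^{H-3/2}$ when estimating the conditional increment $\int_{t_1}^{t_2}e^{-\lambda_k(t_2-s)}\dd\beta_k^H(s)$, while keeping track of the sharp $\lambda_k^{-2H}$ scaling needed for summability. This is the content of the appendices, and the present proposition is then a direct packaging of those 1D estimates into an $\HH$-valued statement via orthogonality plus Gaussian moment equivalence.
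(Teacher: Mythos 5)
Your overall skeleton coincides with the paper's proof: reduce to $p=1$ by Gaussianity of $Z^H(t_2)-Z^H(t_1)$, expand along the eigenbasis so that $\E[\|Z^H(t_2)-Z^H(t_1)\|^2]=\sum_{k\in\N}\E[|\Z_k^H(t_2)-\Z_k^H(t_1)|^2]$, establish a per-mode bound of the form $C_{\alpha,H}\lambda_k^{\alpha-2H}|t_2-t_1|^\alpha$, and sum using $\sum_{k\in\N}\lambda_k^{\alpha-2H}<\infty$ for $\alpha<2H-1/2$. The difference lies in how you obtain the per-mode bound, and that is where there is a genuine gap.

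The paper invokes Lemma~\ref{lem:temp1DH+} (for $H\in(1/2,1)$) and Lemma~\ref{lem:temp1DH-} (for $H\in(1/4,1/2)$), which give exactly $\E[|\Z^{H,\lambda}(t_2)-\Z^{H,\lambda}(t_1)|^2]\le C_{\alpha,H}\lambda^{\alpha-2H}|t_2-t_1|^\alpha$ for the required range of $\alpha$, so no interpolation is needed at the level of the proposition. You instead propose to interpolate between the uniform-in-time bound $C_H\lambda_k^{-2H}$ (Lemmas~\ref{lem:moment1DH+}/\ref{lem:moment1DH-}) and a $\lambda$-uniform temporal bound $\E[|\Z_k^H(t_2)-\Z_k^H(t_1)|^2]\le C_H|t_2-t_1|^{2H}$. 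For $H\in(1/2,1)$ this is fine: Lemma~\ref{lem:temp1DH+} holds for $\alpha\in[0,2H]$ including the endpoint $\alpha=2H$, which is precisely your second ingredient. But for $H\in(1/4,1/2)$ that endpoint bound is exactly what the paper does \emph{not} prove: Lemma~\ref{lem:temp1DH-} is restricted to $\alpha\in(0,2H)$, and the Remark following it explicitly flags that the case $\alpha=2H$ is excluded in this regime. So your assertion that the appendix lemmas ``will provide'' this estimate is incorrect in the rough-noise case, and ``can be derived from~\eqref{eq:KH-}'' is not substantiated---controlling the jump of the integrand $e^{-\lambda(t_2-s)}\mathds{1}_{[t_1,t_2]}(s)$ at $s=t_1$ through the singular kernel without losing a power of $\lambda$ is precisely the delicate point. (The endpoint bound is in fact true: one can prove it using stationarity of increments and $H$-self-similarity to reduce to $\E[|\Z^{H,\mu}(1)|^2]$ with $\mu=\lambda(t_2-t_1)$, bounded by Lemma~\ref{lem:moment1DH-} when $\mu\ge 1$ and by a direct integration-by-parts estimate when $\mu\le 1$; but no such argument appears in your proposal.) The repair is immediate: drop the interpolation and apply Lemma~\ref{lem:temp1DH-} directly with the target exponent $\alpha$, which is legitimate since $(0,2H-1/2)\subset(0,2H)$; this is what the paper does.
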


\begin{proof}[Proof of Proposition~\ref{propo:ZHregul}]
Like in the proof of Proposition~\ref{propo:ZH}, it suffices to consider the case $p=1$ since the random variables $Z^H(t_2)-Z^H(t_1)$ are Gaussian.

Let $\alpha\in(0,2H-\frac12)$. Recall the decomposition~\ref{eq:decompZH} of $Z^H(t)$. Applying the inequality~\eqref{eq:temp1DH+} from Lemma~\ref{lem:temp1DH+} when $H\in(1/2,1)$ or the inequality~\eqref{eq:temp1DH-} from Lemma~\ref{lem:temp1DH-} when $H\in(1/4,1/2)$, one obtains
\begin{align*}
\E[\|Z^H(t_2)-Z^H(t_1)\|^2]&=\sum_{k\in\N}\E[|\Z_k^H(t_2)-\Z_k^H(t_1)|^2]\\
&\le C_{\alpha,H}\sum_{k\in\N}\lambda_k^{\alpha-2H}|t_2-t_1|^\alpha.
\end{align*}
Note that
\[
\sum_{k\in\N}\lambda_k^{\alpha-2H}<\infty.
\]
As a result, one obtains the inequality~\eqref{eq:regulZH} and the proof of Proposition~\ref{propo:ZHregul} is completed.
\end{proof}

\subsection{Well-posedness of the SPDE}

Let $T\in(0,\infty)$ and $x_0\in\HH$. The process $\bigl(X(t)\bigr)_{t\in[0,T]}$ is a mild solution to the stochastic evolution equation
\begin{equation}\label{eq:SPDE}
\left\lbrace
\begin{aligned}
&\dd X(t)+AX(t)\dd t=\bigl(F(X(t))+G(X(t))\bigr)\dd t+\dd B^H(t),\\
&X(0)=x_0,
\end{aligned}
\right.
\end{equation}
if one has for all $t\in[0,T]$
\begin{equation}\label{eq:mild}
X(t)=S(t)x_0+\int_{0}^{t}S(t-s)\bigl(F(X(s))+G(X(s))\bigr)\dd s+\int_{0}^{t}S(t-s)\dd B(s).
\end{equation}
Letting
\[
Y(t)=X(t)-Z^H(t),\quad \forall~t\ge 0,
\]
it is equivalent to consider the evolution equation
\[
\dd Y(t)+AY(t)\dd t=\bigl(F(Y(t)+Z^H(t))+G(Y(t)+Z^H(t))\bigr)\dd t.
\]
By standard techniques, one obtains the following result.

\begin{propo}\label{propo:exact}
Let $H\in(1/4,1/2)\cup(1/2,1)$.

For all $T\in(0,\infty)$ and $x_0\in\EE$, the stochastic evolution equation~\eqref{eq:SPDE} admits a unique mild solution (in the sense of~\eqref{eq:mild}), such that $X(t)\in\EE$ for all $t\in[0,T]$.

Moreover, for all $p\in\N$, $T\in(0,\infty)$ and $x_0\in\EE$, there exists $C_{p,H}(T,\|x_0\|_\EE)\in(0,\infty)$ such that
\begin{equation}\label{eq:momentsexact}
\underset{t\in[0,T]}\sup~\bigl(\E[\|X(t)\|_{\EE}^p]\bigr)^{\frac{1}{p}}\le C_{p,H}(T,\|x_0\|_E).
\end{equation}

Finally, for all $p\in\N$, $\alpha\in(0,2H-1/2)$, $T\in(0,\infty)$ and $x_0\in\HH^\alpha \cap \EE$, there exists a positive real number $C_{p,H,\alpha}(T,\|x_0\|_{\EE},\|x_0\|_{\HH^\alpha})\in(0,\infty)$ such that one has
\begin{equation}\label{eq:incrementsexact}
\bigl(\E[\|X(t_2)-X(t_1)\|^{p}]\bigr)^{\frac{1}{p}}\le C_{p,H,\alpha}(T,\|x_0\|_{\EE},\|x_0\|_{\HH^\alpha})|t_2-t_1|^{\frac{\alpha}{2}},\qquad \forall~t_1,t_2\in[0,T].
\end{equation}
\end{propo}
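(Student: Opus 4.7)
The plan is to remove the stochastic integral by setting $Y(t) = X(t) - Z^H(t)$, so that $Y$ is a mild solution of the pathwise random PDE
\[
\dot Y(t) + AY(t) = F\bigl(Y(t) + Z^H(t)\bigr) + G\bigl(Y(t) + Z^H(t)\bigr),\quad Y(0) = x_0.
\]
Propositions~\ref{propo:ZH_E} and~\ref{propo:ZHregul}, together with a Kolmogorov-type argument in the time variable, provide a modification of $Z^H$ with continuous $\EE$-valued paths and finite moments of any order of $\sup_{[0,T]}\|Z^H\|_\EE$. For almost every $\omega$, the equation above can then be analysed $\omega$-wise in $\EE$, completely bypassing the stochastic calculus.

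For existence and uniqueness, I would run a Picard iteration on a small interval, exploiting the local Lipschitz continuity of $F$ on $\EE$ (from the polynomial growth bound in Assumption~\ref{ass:f}) and the global Lipschitz continuity of $G$ (Assumption~\ref{ass:g}). Extending the local solution to $[0,T]$ requires a pathwise a priori bound for $\|Y(t)\|_\EE$. This is the main obstacle: it is obtained by combining the one-sided Lipschitz property of $f$ (which allows writing $f = L_f\,\mathrm{id} + \tilde f$ with $\tilde f$ non-increasing) with the $\EE$-contraction and positivity of the semigroup $S(t)$ from Proposition~\ref{propo:semigroup}, plus the Lipschitz control coming from $G$ and from the continuous $\EE$-valued perturbation $Z^H$. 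This yields a Grönwall-type bound depending only on $\|x_0\|_\EE$, $L_f$, $L_g$ and $\sup_{s\in[0,T]}\|Z^H(s)\|_\EE$. Taking $p$-th moments and invoking Proposition~\ref{propo:ZH_E} delivers~\eqref{eq:momentsexact}.

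For the temporal regularity~\eqref{eq:incrementsexact}, I would start from the mild formulation~\eqref{eq:mild} and decompose, for $0 \le t_1 \le t_2 \le T$,
\begin{align*}
X(t_2) - X(t_1) = {}& S(t_1)\bigl(S(t_2-t_1)-I\bigr)x_0 + \int_{0}^{t_1}\bigl(S(t_2-s)-S(t_1-s)\bigr)\bigl(F+G\bigr)(X(s))\dd s \\
& + \int_{t_1}^{t_2}S(t_2-s)\bigl(F+G\bigr)(X(s))\dd s + \bigl(Z^H(t_2) - Z^H(t_1)\bigr).
\end{align*}
The first term is bounded by $C(t_2-t_1)^{\alpha/2}\|x_0\|_{\HH^\alpha}$ via the estimate $\|A^{-\alpha/2}(I-S(\tau))\|_{\mathcal{L}(\HH)} \le C\tau^{\alpha/2}$ of Proposition~\ref{propo:semigroup}; the second is bounded by $C(t_2-t_1)^{\alpha/2}$ using~\eqref{eq.11} with $\rho = \alpha/2$ combined with the $\EE$-moment bound on $X(s)$ (which controls $\|(F+G)(X(s))\|_\HH$ via the polynomial growth of $F$); the third is trivially $O(t_2-t_1)$ thanks to $\|S(\cdot)\|_{\mathcal{L}(\HH)} \le 1$; and the fourth is exactly the $L^{p}(\Omega;\HH)$-estimate of order $(t_2-t_1)^{\alpha/2}$ delivered by Proposition~\ref{propo:ZHregul}. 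Taking $p$-th moments and using Minkowski gives~\eqref{eq:incrementsexact}. Once the pathwise $\EE$-valued a priori bound for $Y$ is in place, the remaining estimates reduce to routine applications of the semigroup properties and of the preliminary regularity results for $Z^H$.
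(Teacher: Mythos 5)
Your proposal is correct and takes essentially the same route as the paper: the paper gives no detailed proof of Proposition~\ref{propo:exact}, merely noting that the substitution $Y=X-Z^H$ reduces~\eqref{eq:SPDE} to a pathwise random evolution equation and that the conclusion follows ``by standard techniques'' --- exactly the reduction and the class of arguments (pathwise analysis via the one-sided Lipschitz property, $\EE$-moment bounds for $Z^H$, semigroup smoothing for the increments) that you develop. Your write-up is in fact more detailed than the paper's; the only points you leave implicit (a space-time Kolmogorov argument to control moments of $\sup_{t\in[0,T]}\|Z^H(t)\|_\EE$, since Proposition~\ref{propo:ZHregul} bounds increments only in $\HH$, and the positivity/maximum principle for $S(t)$, which is not among the properties listed in Proposition~\ref{propo:semigroup}) are standard facts about the Dirichlet heat semigroup and Gaussian fields, consistent with the level of rigor the paper itself adopts here.
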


\section{Numerical scheme}\label{S.4}

Let $T\in(0,\infty)$ be an arbitrary time, and define the time-step size $\Delta t=T/N$, for some $M\in\N$.

For all $n\in\{0,\ldots,N\}$, set $t_n=n\Delta t$, and for all $n\in\{0,\ldots,N\}$ define the increments of the cylindrical fractional Brownian motion
\[
\Delta B_n^H=B^H(t_{n+1})-B^H(t_n).
\]

We consider the following numerical scheme: set
\begin{equation}\label{eq:scheme}
\left\lbrace
\begin{aligned}
&X_{n+1}=S(\Delta t)\Phi_{\Delta t}(X_n)+A^{-1}\bigl(I-S(\Delta t)\bigr)G(X_n)+S(\Delta t)\Delta B_n^H,\quad \forall~n\in\{0,\ldots,N-1\},\\
&X_0=x_0.
\end{aligned}
\right.
\end{equation}
Noting that $\Phi_{\Delta t}=Id+\Delta t\Psi_{\Delta t}$, the scheme can equivalently be written as
\[
X_{n+1}=X_n+\Delta tS(\Delta t)\Psi_{\Delta t}(X_n)+A^{-1}\bigl(I-S(\Delta t)\bigr)G(X_n)+S(\Delta t)\Delta B_n^H.
\]

The discrete-time process $\bigl(X_n\bigr)_{0\le n\le N}$ can be described using a discrete-time mild formulation: for all $n\in\{0,\ldots,N\}$, one has
\begin{align*}
X_n&=S(\Delta t)^nx_0+\Delta t\sum_{j=0}^{n-1}S(\Delta t)^{n-j}\Psi_{\Delta t}(X_j)+\sum_{j=0}^{n-1}A^{-1}\bigl(I-S(\Delta t)\bigr)S(\Delta t)^{n-j-1}G(X_j)+\sum_{j=0}^{n-1}S(\Delta t)^{n-j}\Delta B_j^H\\
&=S(t_n)x_0+\Delta t\sum_{j=0}^{n-1}S(t_n-t_j)\Psi_{\Delta t}(X_j)+\sum_{j=0}^{n-1}A^{-1}\bigl(I-S(\Delta t)\bigr)S(t_n-t_{j+1})G(X_j)+\sum_{j=0}^{n-1}S(t_n-t_j)\Delta B_j^H.
\end{align*}

The objective of this section is to prove the main result of this manuscript.
\begin{theo}\label{theo:main}
For all $p\in\N$, $\alpha\in(0,2H-1/2)$, $T\in(0,\infty)$ and $x_0\in\HH^\alpha\cap\EE$, there exists a positive real number $C_{p,H,\alpha}(T,\|x_0\|_{\EE},\|x_0\|_{\HH^\alpha})\in(0,\infty)$ such that for all $\Delta t=T/N$ with $N\in\N$, one has
\begin{equation}\label{eq:errorscheme}
\underset{n=0,\ldots,N}\sup~\bigl(\E[\|X_n-X(t_n)\|_{\HH}^{p}]\bigr)^{\frac{1}{p}}\le C_{p,H,\alpha}(T,\|x_0\|_{\EE},\|x_0\|_{\HH^\alpha})\Delta t^{\frac{\alpha}{2}}.
\end{equation}
\end{theo}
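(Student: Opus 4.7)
The plan is to shift by the fractional Ornstein--Uhlenbeck process so that the noise discretization error is isolated. Define the discrete OU process $\tilde Z_n^H := \sum_{j=0}^{n-1} S(t_n - t_j)\Delta B_j^H$ and set $Y_n := X_n - \tilde Z_n^H$, $Y(t) := X(t) - Z^H(t)$, so that
\[
X_n - X(t_n) = \bigl(Y_n - Y(t_n)\bigr) + \bigl(\tilde Z_n^H - Z^H(t_n)\bigr).
\]
For the noise error I would use the identity $Z^H(t_n) - \tilde Z_n^H = \sum_{j=0}^{n-1}\int_{t_j}^{t_{j+1}}\bigl(S(t_n-s)-S(t_n-t_j)\bigr)\dd B^H(s)$, decompose along $(e_k)$ as in~\eqref{eq:decompZH}, apply the isometry~\eqref{eq:Ito} componentwise, and invoke the one-dimensional fractional OU increment estimates from Appendices~\ref{app:+} and~\ref{app:-} together with the semigroup regularity, in the same spirit as Proposition~\ref{propo:ZHregul}; this should yield $\|\tilde Z_n^H - Z^H(t_n)\|_{L^p(\Omega;\HH)}\le C\Delta t^{\alpha/2}$ for every $\alpha\in(0,2H-1/2)$. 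Before attacking the deterministic error, I would also establish uniform moment bounds $\sup_{n\le N}\E\|X_n\|_{\EE}^p<\infty$, using the Lipschitz property $\|\Phi_{\Delta t}(x_2)-\Phi_{\Delta t}(x_1)\|_{\EE}\le e^{\mathrm C\Delta t}\|x_2-x_1\|_{\EE}$ from Lemma~\ref{lem:flow}, the contraction $\|S(\Delta t)\|_{\mathcal L(\EE)}\le 1$, Assumption~\ref{ass:g}, and uniform $\EE$-moments of $\tilde Z_n^H$ obtained as in Proposition~\ref{propo:ZH_E}.

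The core step is the analysis of $\epsilon_n := Y_n - Y(t_n)$. Using $\tilde Z_{n+1}^H = S(\Delta t)\tilde Z_n^H + S(\Delta t)\Delta B_n^H$ to cancel the noise term in the scheme, and the mild form of $Y(t)$, one derives $\epsilon_{n+1} = S(\Delta t)\epsilon_n - \mathcal F_n - \mathcal G_n$, where
\[
\mathcal F_n = \int_{t_n}^{t_{n+1}} S(t_{n+1}-s) F(X(s))\dd s - \Delta t\,S(\Delta t)\Psi_{\Delta t}(X_n),\quad \mathcal G_n = \int_{t_n}^{t_{n+1}} S(t_{n+1}-s)\bigl(G(X(s))-G(X_n)\bigr)\dd s.
\]
Using the identity $\Delta t\,\Psi_{\Delta t}(X_n) = \int_0^{\Delta t} F(\Phi_r(X_n))\dd r$, I would split $\mathcal F_n = \mathcal F_n^{(1)}+\mathcal F_n^{(2)}$ with
\[
\mathcal F_n^{(1)} = \int_{t_n}^{t_{n+1}} S(t_{n+1}-s)\bigl[F(X(s))-F(\Phi_{s-t_n}(X_n))\bigr]\dd s,\quad \mathcal F_n^{(2)} = \int_{t_n}^{t_{n+1}} \bigl[S(t_{n+1}-s)-S(\Delta t)\bigr]F(\Phi_{s-t_n}(X_n))\dd s.
\]
Assumption~\ref{ass:f}, Lemma~\ref{lem:flow}, the incremental bound~\eqref{eq:incrementsexact}, and the a priori $\EE$-moments combine to give $\|\mathcal F_n^{(1)}\|_{L^p(\Omega;\HH)}\le C\Delta t^{1+\alpha/2}+C\Delta t\,\|\epsilon_n\|_{L^p(\Omega;\HH)}$, and analogously for $\mathcal G_n$ using Assumption~\ref{ass:g}.

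Iterating yields $\epsilon_n = -\sum_{j=0}^{n-1} S(t_n-t_{j+1})(\mathcal F_j+\mathcal G_j)$. The $\mathcal F_j^{(1)}$ and $\mathcal G_j$ contributions sum by Minkowski to $C\Delta t^{\alpha/2}+C\Delta t\sum_{j<n}\|\epsilon_j\|_{L^p(\Omega;\HH)}$, which closes by discrete Gronwall. The semigroup-regularity residual $\sum_j S(t_n-t_{j+1})\mathcal F_j^{(2)}$ cannot be treated stepwise without loss, and I would instead rewrite it as a single time integral $\int_0^{t_n}\bigl[S(t_n-s)-S(t_n-t_{j(s)})\bigr]F(\Phi_{s-t_{j(s)}}(X_{j(s)}))\dd s$, and bound it via~\eqref{eq.11} with an exponent $\rho\in(\alpha/2,1)$ to obtain $O(\Delta t^{\rho})=O(\Delta t^{\alpha/2})$, which is admissible since $\alpha/2<3/4<1$. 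Combining all pieces yields the claim. The main obstacle I anticipate is precisely this coordination: the non-globally Lipschitz $F$ forces the use of the polynomial Lipschitz bound and hence of a priori $\EE$-moments, and simultaneously the semigroup-regularity error $\mathcal F^{(2)}$ must be summed globally rather than bounded step by step, as the naive triangle inequality loses a factor of $N$.
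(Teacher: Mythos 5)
Your architecture is close in spirit to the paper's (discrete fractional OU process $Z_n^H$, componentwise reduction of the noise error to scalar estimates, a priori $\EE$-moments, discrete Gronwall for the rest), and two ingredients are sound: your global-in-time treatment of $\mathcal{F}^{(2)}$ as a single integral with the integrable singularity $(t_n-s)^{-\rho}$ is exactly how the paper handles the corresponding term (the term $e_n^1$ in the proof of Proposition~\ref{propo:errorXtildeXexact}), and the noise error $\tilde Z_n^H-Z^H(t_n)$ is indeed $O(\Delta t^{\alpha/2})$ in every $L^p$. (A secondary remark on that point: the scalar inputs you need are the dedicated discretization-error estimates of Appendix~\ref{app:D}, Lemmas~\ref{lem:error1DH+} and~\ref{lem:error1DH-}, packaged as Proposition~\ref{propo:errorZ} --- not the increment estimates of Appendices~\ref{app:+} and~\ref{app:-}: $Z^H(t_n)-Z_n^H$ is not an increment of the OU process, and for fractional noise the contributions of distinct subintervals are correlated, so they cannot be estimated interval by interval and summed.)

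The genuine gap is the claimed estimate $\|\mathcal{F}_n^{(1)}\|_{L^p(\Omega;\HH)}\le C\Delta t^{1+\alpha/2}+C\Delta t\,\|\epsilon_n\|_{L^p(\Omega;\HH)}$. The difference $X(s)-\Phi_{s-t_n}(X_n)$ contains $X(t_n)-X_n$, hence $\epsilon_n$, and by Assumption~\ref{ass:f} the factor multiplying it is the random weight $1+\|X(s)\|_{\EE}^{2q}+\|\Phi_{s-t_n}(X_n)\|_{\EE}^{2q}$, which has all moments but is not almost surely bounded. After H\"older or Cauchy--Schwarz this yields $C\Delta t\,\|\epsilon_n\|_{L^{2p}(\Omega;\HH)}$, not $C\Delta t\,\|\epsilon_n\|_{L^p(\Omega;\HH)}$, and the discrete Gronwall argument then does not close: the recursion $a_{n+1}^{(p)}\le a_n^{(p)}+C\Delta t\,a_n^{(2p)}+C\Delta t^{1+\alpha/2}$ couples all moment levels, the moment index doubles at every step, and since the prefactor of the weighted term is exactly $\Delta t$ (borderline), bootstrapping from the trivial a priori bound $a_n^{(q)}=O(1)$ produces no rate at all. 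This is precisely the difficulty that the paper's two-step decomposition is designed to avoid: it inserts the auxiliary process $\tilde X_n$ of~\eqref{eq:tildeXn}, built from the exact solution values $X(t_j)$ and the discrete noise $Z_n^H$, so that (i) in the recursion for $X_n-\tilde X_n$ (Proposition~\ref{propo:errorXtildeXnum}) the non-Lipschitz nonlinearity enters only through $\Phi_{\Delta t}(X_n)-\Phi_{\Delta t}(\tilde X_n)$, which is controlled by the deterministic constant $e^{C\Delta t}$ thanks to the one-sided Lipschitz condition (Lemma~\ref{lem:flow}), and (ii) the polynomial random weights only ever multiply $\tilde X_n-X(t_n)$, a quantity estimated separately in every $L^p$ by a direct, non-recursive argument (Proposition~\ref{propo:errorXtildeXexact}), so Cauchy--Schwarz is harmless there. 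To repair your proof you must restructure the comparison so that the iterated error is never multiplied by an unbounded random weight --- which essentially amounts to introducing $\tilde X_n$ (or an equivalent intermediate process) as the paper does.
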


The proof of Theorem~\ref{theo:main} follows from several auxiliary results.

First, define the auxiliary discrete-time process $\bigl(Z_n^H\bigr)_{n\ge 0}$ defined by
\begin{equation}
Z_n^H=\sum_{j=0}^{n-1}S(\Delta t)^{n-j}\Delta B_j^H=\sum_{j=0}^{n-1}S(t_n-t_j)\Delta B_j^H,\quad \forall~n\ge 0,
\end{equation}
which is the solution to the scheme when the nonlinearities $F$ and $G$ are removed:
\[
\left\lbrace
\begin{aligned}
&Z_{n+1}^H=S(\Delta t)Z_n^{H}+S(\Delta t)\Delta B_n^H,\quad \forall~n\ge 0,\\
&Z_0^H=0.
\end{aligned}
\right.
\]

Note that one can express $Z_n^H$ as a combination of real-valued Gaussian random variables $\Z_{k,n}^H$: one has
\begin{equation}\label{eq:decompZnH}
Z_n^H=\sum_{k\in\N}\Z_{k,n}^{H}e_k,
\end{equation}
where for all $k\in\N$ the real-valued discrete-time process $\bigl(\Z_{k,n}^{H}\bigr)_{n\ge 0}$ is defined by
\[
\Z_{k,n}^{H}=\sum_{\ell\in\N}\sum_{j=0}^{n-1}\langle S(t_n-t_j)e_\ell,e_k\rangle \Delta B_j^H=\sum_{j=0}^{n-1}e^{-\lambda_k(t_n-t_j)}\Delta \beta_{k,j}^{H},
\]
with increments of the real-valued fractional Brownian motions denoted by $\Delta \beta_{k,j}^{H}=\beta_k^{H}(t_{j+1})-\beta_k^H(t_j)$.

First, Lemmas~\ref{lem:momentnum1DH+} and~\ref{lem:momentnum1DH-} show that the discrete-time process $\bigl(Z_n^H\bigr)_{n\ge 0}$ takes values in $\HH$ and in $\EE$ when it is assumed that $H>1/4$.

\begin{propo}\label{propo:boundZnH}
For all $H\in(1/4,1/2)\cup(1/2,1)$, for all $\Delta t\in(0,1)$ and all $n\ge 0$, almost surely $Z_n^H\in \HH$ and $Z_n\in\EE$. Moreover, for all $p\in\N$, one has
\begin{equation}\label{eq:boundZnH}
\underset{\Delta t\in(0,1)}\sup~\underset{n\in\N}\sup~\bigl(\E[\|Z_n^H\|^p]\bigr)^{\frac{1}{p}}+\underset{\Delta t\in(0,1)}\sup~\underset{n\in\N}\sup~\bigl(\E[\|Z_n^H\|_{\EE}^p]\bigr)^{\frac{1}{p}}<\infty.
\end{equation}
\end{propo}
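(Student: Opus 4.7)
The plan is to mirror closely the arguments used for the continuous-time process in Propositions~\ref{propo:ZH} and~\ref{propo:ZH_E}, replacing the one-dimensional continuous-time moment bounds on the fractional Ornstein--Uhlenbeck process by their discrete-time counterparts stated in Lemmas~\ref{lem:momentnum1DH+} and~\ref{lem:momentnum1DH-}. Since for each fixed $n$ and $\Delta t$ the random variable $Z_n^H$ is a Gaussian element of $\HH$, it suffices to establish the bound for $p=2$: all higher moments follow by equivalence of Gaussian moments, and the moment in $\EE$ will be obtained separately via a Kolmogorov-type argument where Gaussianity again promotes an $L^2$-bound on increments to an $L^p$-bound.

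First, using the decomposition~\eqref{eq:decompZnH} and orthonormality of $\bigl(e_k\bigr)_{k\in\N}$, I would write
\[
\E[\|Z_n^H\|^2]=\sum_{k\in\N}\E[|\Z_{k,n}^H|^2],
\]
and invoke Lemma~\ref{lem:momentnum1DH+} when $H\in(1/2,1)$ or Lemma~\ref{lem:momentnum1DH-} when $H\in(1/4,1/2)$ to obtain a constant $C_H\in(0,\infty)$, independent of $\Delta t\in(0,1)$, $n\in\N$, and $k\in\N$, such that $\E[|\Z_{k,n}^H|^2]\le C_H\lambda_k^{-2H}$. The convergence of $\sum_{k\in\N}\lambda_k^{-2H}$, equivalent to $H>1/4$, then yields the uniform $\HH$-bound.

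Second, for the bound in $\EE$, I would split into two cases as in the proof of Proposition~\ref{propo:ZH_E}. When $H\in(1/2,1)$, pick $\alpha\in(1/2,2H-1/2)$: the same argument as above bounds $\E[\|Z_n^H\|_{\HH^\alpha}^2]\le C_H\sum_{k\in\N}\lambda_k^{\alpha-2H}<\infty$ uniformly in $\Delta t$ and $n$, and the Sobolev-type embedding~\eqref{eq:Sobolevembedding} concludes. When $H\in(1/4,1/2)$, fix $\alpha\in(0,2H-1/2)$: using the H\"older property~\eqref{eq:Holder_ek} of the eigenfunctions together with the one-dimensional bound on $\E[|\Z_{k,n}^H|^2]$,
\[
\E[|Z_n^H(\xi_2)-Z_n^H(\xi_1)|^2]=\sum_{k\in\N}\E[|\Z_{k,n}^H|^2]|e_k(\xi_2)-e_k(\xi_1)|^2\le C_{H,\alpha}|\xi_2-\xi_1|^{2\alpha},
\]
uniformly in $\Delta t$ and $n$. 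Gaussianity upgrades this to an $L^{2p}$-bound for every $p\in\N$, and for $p$ large enough so that $2p\alpha>1$ the Kolmogorov regularity criterion gives an almost sure modulus of continuity with a moment bound; choosing $\xi_1=0$ and using that $Z_n^H(0)=Z_n^H(1)=0$ (since every $e_k$ vanishes at the boundary) closes the argument.

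The main obstacle is ensuring the uniformity in $\Delta t\in(0,1)$ and $n\in\N$ of the one-dimensional bound $\E[|\Z_{k,n}^H|^2]\le C_H\lambda_k^{-2H}$, which is precisely the content the appendix lemmas must deliver. The rough-noise regime $H\in(1/4,1/2)$ is the delicate case: the kernel $K_H$ is singular at the diagonal, the continuous-time representation using~\eqref{eq:KH-} does not discretize trivially, and one must carefully exploit the geometric decay of $e^{-\lambda_k(t_n-t_j)}$ across increments $\Delta\beta_{k,j}^H$ that are themselves correlated. Once Lemmas~\ref{lem:momentnum1DH+} and~\ref{lem:momentnum1DH-} are in hand, the rest of the proof is a direct adaptation of the two continuous-time propositions.
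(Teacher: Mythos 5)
Your proposal is correct and follows essentially the same route as the paper's own proof: the decomposition~\eqref{eq:decompZnH} into one-dimensional fractional Ornstein--Uhlenbeck approximations, the uniform bounds $\E[|\Z_{k,n}^H|^2]\le C_H\lambda_k^{-2H}$ from Lemmas~\ref{lem:momentnum1DH+} and~\ref{lem:momentnum1DH-}, summability of $\lambda_k^{-2H}$ for $H>1/4$, and the two-case treatment in $\EE$ (Sobolev embedding~\eqref{eq:Sobolevembedding} for $H\in(1/2,1)$, Kolmogorov criterion via~\eqref{eq:Holder_ek} and $Z_n^H(0)=0$ for $H\in(1/4,1/2)$). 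You also correctly identify that the real work is hidden in the uniformity in $\Delta t$ and $n$ of the appendix lemmas, which is exactly where the paper places it.
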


\begin{proof}[Proof of Proposition~\ref{propo:boundZnH}]
Since the considered random variables are Gaussian, it suffices to deal with the case $p=1$.

Owing to the decomposition~\eqref{eq:decompZnH}, for all $n\ge 0$ one has
\[
\E[\|Z_n^H\|^2]=\sum_{k\in\N}\E[|\Z_{k,n}^{H}|^2].
\]
Applying the inequality~\eqref{eq:momentnum1DH+} from Lemma~\ref{lem:momentnum1DH+} when $H\in(1/2,1)$ or the inequality~\eqref{eq:momentnum1DH-} from Lemma~\ref{lem:momentnum1DH-} when $H\in(1/4,1/2)$, for all $k\in\N$ one has
\[
\underset{\Delta t\in(0,1)}\sup~\underset{n\in\N}\sup~\E[|\Z_{k,n}^{H}|^2]\le C_H\lambda_k^{-2H}.
\]
When $H>1/4$, one has $\sum_{k\in\N}\lambda_k^{-2H}<\infty$, therefore one obtains moment bounds in $\HH$:
\[
\underset{\Delta t\in(0,1)}\sup~\underset{n\in\N}\sup~\E[\|Z_n^H\|^2]<\infty.
\]
Similarly, for all $\alpha\in(0,2H-\frac12)$, one has
\[
\underset{\Delta t\in(0,1)}\sup~\underset{n\in\N}\sup~\E[\|Z_n^H\|_{\HH^\alpha}^2]=\underset{\Delta t\in(0,1)}\sup~\underset{n\in\N}\sup~\E[\|(-A)^{\frac{\alpha}{2}}Z_n^H\|^2]\le \sum_{k\in\N}\lambda_k^\alpha \underset{\Delta t\in(0,1)}\sup~\underset{n\in\N}\sup~\E[|\Z_{k,n}^{H}|^2]\le C_J\sum_{k\in\N}\lambda_k^{\alpha-2H}<\infty.
\]
To prove moment bounds in $\EE$, like in the proof of Proposition~\ref{propo:ZH_E}, the two cases $H\in(1/4,1/2)$ and $H\in(1/2,1)$ are treated separately.

First, assume that $H\in(1/2,1)$. In that case, one has $2H-1/2>1/2$, therefore one may choose $\alpha\in(1/2,2H-1/2)$, and combining the embedding property~\eqref{eq:Sobolevembedding} and the moment bounds above of $Z_n^H$ in the $\HH^\alpha$ norm, one obtains
\[
\underset{\Delta t\in(0,1)}\sup~\underset{n\in\N}\sup~\E[\|Z_n^H\|_{\EE}^{2p}]\le C_\alpha^{2p}\underset{\Delta t\in(0,1)}\sup~\underset{n\in\N}\sup~\E[\|Z_n^H\|_{\HH^\alpha}^{2p}]<\infty.
\]
Second, assume that $H\in(1/4,1/2)$. For all $n\ge 0$ and all $\xi\in[0,1]$, one has
\[
Z_n^H(\xi)=\sum_{k\in\N}\Z_{k,n}^{H}e_k(\xi).
\]
Observe that one has $Z_n^{H}(0)=Z_n^H(1)=0$ for all $n\ge 0$.

Let $\alpha\in(0,2H-1/2)$, then using the inequality~\eqref{eq:Holder_ek}, for all $\xi_1,\xi_2\in[0,1]$, one has for all $n\ge 0$
\begin{align*}
\E[\|Z_n^H(\xi_2)-Z_n^H(\xi_1)\|^2]&=\sum_{k\in\N}\E[|\mathcal{Z}_{k,n}^H|^2]\big|e_k(\xi_2)-e_k(\xi_1)\big|^2\\
&\le C_\alpha \sum_{k\in\N}\lambda_k^{\alpha-2H}|\xi_2-\xi_1|^{2\alpha}\\
&\le C_{H,\alpha}|\xi_2-\xi_1|^{2\alpha},
\end{align*}
with $C_{H,\alpha}\in(0,\infty)$.

Since the random variable $Z_n^H(\xi_2)-Z_n^H(\xi_1)$ is Gaussian, for all $p\in\N$ there exists $C_{p,H,\alpha}\in(0,\infty)$ such that
\[
\underset{\Delta t\in(0,1)}\sup~\underset{n\in\N}\sup~\E[\|Z_n^H(\xi_2)-Z_n^H(\xi_1)\|^{2p}]\le C_{p,H,\alpha}|\xi_2-\xi_1|^{2p\alpha}.
\]
Choosing a sufficiently large $p_\alpha\in\N$, such that $2p_\alpha\alpha>1$, one may apply the Kolmogorov regularity criterion: for all $p\in\N$ such that $p\ge p_\alpha$, there exists $C_{p,H,\alpha}\in(0,\infty)$ such that
\[
\underset{\Delta t\in(0,1)}\sup~\underset{n\in\N}\sup~\E\bigl[\underset{\xi_1,\xi_2\in[0,1], \xi_1\neq \xi_2}\sup~\frac{\|Z_n^H(\xi_2)-Z_n^H(\xi_1)\|^{2p}}{|\xi_2-\xi_1|^{2p\alpha-1}}\bigr]<\infty.
\]
In particular, for all $n\ge 0$ one has $Z_n^H\in\EE$. Choosing $\xi_1=0$ and $\xi_2=\xi$ in the above, and recalling that $Z_n^H(0)=0$ for all $n\ge 0$, for all $p\ge p_\alpha$ one obtains
\[
\underset{\Delta t\in(0,1)}\sup~\underset{n\in\N}\sup~\E\bigl[\|Z_n^H\|_{\EE}^{2p}]=\underset{\Delta t\in(0,1)}\sup~\underset{n\in\N}\sup~\E\bigl[\underset{\xi\in[0,1]}\sup~\|Z_n^H(\xi)\|^{2p}]<\infty.
\]
The proof of Proposition~\ref{propo:boundZnH} is thus completed.
\end{proof}

\begin{propo}\label{propo:errorZ}
For all $p\in\N$ and $\alpha\in(0,2H-1/2)$, there exists $C_{p,H,\alpha}\in(0,\infty)$ such that for all $\Delta t=T/N$ with $N\in\N$, one has
\begin{equation}\label{eq:errorZ}
\underset{n\in\N}\sup~\bigl(\E[\|Z^H(t_n)-Z_n^H\|_{\HH}^{p}]\bigr)^{\frac{1}{p}}\le C_{p,H,\alpha}\Delta t^{\frac{\alpha}{2}}.
\end{equation}
\end{propo}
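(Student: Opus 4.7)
The plan is to reduce the bound to a one-dimensional estimate, which is then combined with the spectral representation of $Z^H$ and $Z_n^H$ and summed over frequencies in the same manner as in the proofs of Propositions~\ref{propo:ZH} and~\ref{propo:boundZnH}. The argument follows three steps.

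First, since $Z^H(t_n)-Z_n^H$ is an $\HH$-valued Gaussian random variable, it suffices to prove the estimate for $p=1$: the extension to arbitrary $p\in\N$ then follows from hypercontractivity (equivalence of Gaussian moments). Using the decompositions~\eqref{eq:decompZH} and~\eqref{eq:decompZnH}, together with the Parseval identity, one has
\[
\E\bigl[\|Z^H(t_n)-Z_n^H\|^2\bigr]=\sum_{k\in\N}\E\bigl[|\mathcal{Z}_k^H(t_n)-\mathcal{Z}_{k,n}^H|^2\bigr].
\]
The key object is therefore the real-valued error $\mathcal{Z}_k^H(t_n)-\mathcal{Z}_{k,n}^H$, which can be rewritten as a stochastic integral against $\beta_k^H$:
\[
\mathcal{Z}_k^H(t_n)-\mathcal{Z}_{k,n}^H=\sum_{j=0}^{n-1}\int_{t_j}^{t_{j+1}}\bigl(e^{-\lambda_k(t_n-s)}-e^{-\lambda_k(t_n-t_j)}\bigr)\dd\beta_k^H(s).
\]

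Second, I would apply the one-dimensional auxiliary lemmas from Appendices~\ref{app:+} and~\ref{app:-}, in the same spirit as the moment and regularity results invoked in Propositions~\ref{propo:ZH}, \ref{propo:ZHregul} and~\ref{propo:boundZnH}. The expected form of the estimate is, for every $\alpha\in(0,2H-1/2)$,
\[
\underset{n\in\N}\sup~\E\bigl[|\mathcal{Z}_k^H(t_n)-\mathcal{Z}_{k,n}^H|^2\bigr]\le C_{H,\alpha}\lambda_k^{\alpha-2H}\Delta t^{\alpha},\qquad \forall~k\in\N.
\]
For $H\in(1/2,1)$, this bound is obtained by inserting the integrand $\phi(s)=e^{-\lambda_k(t_n-s)}-e^{-\lambda_k(t_n-t_j)}$ (for $s\in[t_j,t_{j+1}]$) into the isometry~\eqref{eq:Ito+}, splitting the double integral into diagonal and off-diagonal contributions, and using the basic inequality $|e^{-\lambda_k(t_n-s)}-e^{-\lambda_k(t_n-t_j)}|\le C_\alpha(\lambda_k(s-t_j))^{\alpha}$ for $s\in[t_j,t_{j+1}]$, combined with standard estimates on discrete Riemann sums for the kernel $|s_2-s_1|^{2H-2}$. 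For $H\in(1/4,1/2)$, one uses the representation~\eqref{eq:KH-} for the operator $\mathcal{K}_{H,t_n}^\star$ and the isometry~\eqref{eq:Ito}, together with piecewise H\"older estimates on the same integrand; this parallels the techniques used in Lemma~\ref{lem:momentnum1DH-}.

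Third, summing over $k\in\N$ yields
\[
\E\bigl[\|Z^H(t_n)-Z_n^H\|^2\bigr]\le C_{H,\alpha}\Delta t^{\alpha}\sum_{k\in\N}\lambda_k^{\alpha-2H},
\]
and the series converges since $\alpha<2H-1/2$, which gives~\eqref{eq:errorZ} for $p=2$ (hence for $p=1$ by Cauchy--Schwarz and for all $p\in\N$ by Gaussianity).

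The main obstacle is clearly the second step: obtaining the right dependence of the one-dimensional error on $\lambda_k$ and $\Delta t$ simultaneously. The subtlety is that one must interpolate between the trivial bound (boundedness uniformly in $\Delta t$ with only the factor $\lambda_k^{-2H}$) and a rate $\Delta t^{2H}$ without any decay in $k$, in order to get the mixed bound $\lambda_k^{\alpha-2H}\Delta t^{\alpha}$ for all intermediate $\alpha$. For $H\in(1/4,1/2)$ this is technically more demanding than for $H\in(1/2,1)$ because the kernel $\mathcal{K}_{H,t_n}^\star$ has a more complicated structure, and the argument must invoke the appendix lemmas rather than be carried out from scratch.
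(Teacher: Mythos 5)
Your overall architecture is exactly the paper's: reduce to $p=1$ by Gaussianity, use the decompositions~\eqref{eq:decompZH} and~\eqref{eq:decompZnH} with Parseval to write $\E[\|Z^H(t_n)-Z_n^H\|^2]=\sum_{k\in\N}\E[|\Z_k^H(t_n)-\Z_{k,n}^H|^2]$, establish a per-mode bound $C_{\alpha,H}\lambda_k^{\alpha-2H}\Delta t^{\alpha}$, and sum the series under the condition $\alpha<2H-1/2$. The per-mode estimate you state as the ``expected form'' is indeed correct, but in the paper it is not re-derived inside this proof: it is quoted from Appendix~\ref{app:D}, namely Lemma~\ref{lem:error1DH+} (which gives $\E[|\Z^{H,\lambda}(t_n)-\Z_n^{H,\lambda}|^2]\le C_H\lambda^{-2H}\min(1,\lambda\Delta t)^2$ for $H\in(1/2,1)$) and Lemma~\ref{lem:error1DH-} (which gives $C_{\alpha,H}\lambda^{-2H}(\lambda\Delta t)^{2\alpha}$ for $H\in(1/4,1/2)$, applied with exponent $\alpha/2$). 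Your citations point elsewhere: Appendices~\ref{app:+},~\ref{app:-} and Lemma~\ref{lem:momentnum1DH-} contain moment and temporal-regularity bounds for the Ornstein--Uhlenbeck process, not the discretization-error bounds that this proposition needs.

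The genuine gap is in your from-scratch sketch of the per-mode estimate. For $H\in(1/2,1)$, the inequality $|e^{-\lambda_k(t_n-s)}-e^{-\lambda_k(t_n-t_j)}|\le C_\alpha(\lambda_k(s-t_j))^{\alpha}$ discards the factor $e^{-\lambda_k(t_n-s)}$; inserting the resulting bound $C_\alpha(\lambda_k\Delta t)^{\alpha}$ into the isometry~\eqref{eq:Ito+} leaves the double integral $\int_0^{t_n}\int_0^{t_n}|s_2-s_1|^{2H-2}\dd s_1\dd s_2$, which is of order $t_n^{2H}$: the resulting estimate $C(\lambda_k\Delta t)^{2\alpha}t_n^{2H}$ has no decay in $k$ (so the series over $k$ diverges) and is not uniform in $n$ (so the supremum over $n\in\N$ is infinite). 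The mechanism that makes the appendix proof work is the exact factorization $e^{-\lambda(t_n-s)}-e^{-\lambda(t_n-t_{\ell(s)})}=e^{-\lambda(t_n-s)}\bigl(1-e^{-\lambda(s-t_{\ell(s)})}\bigr)$ together with $1-e^{-\lambda(s-t_{\ell(s)})}\le\min(1,\lambda\Delta t)$: the factor $\min(1,\lambda\Delta t)^2$ is pulled out of the double integral while the exponentials are retained, and the remaining integral is precisely the one bounded by $C_H\lambda^{-2H}$ in the proof of Lemma~\ref{lem:moment1DH+}. For the same reason, the interpolation you propose at the end does not circumvent the difficulty: the endpoint ``rate $\Delta t^{2H}$ without any decay in $k$,'' uniformly in $n$, cannot be proved without retaining the exponential decay either (it is a consequence of the factorized bound, not an independent input). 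Likewise, for $H\in(1/4,1/2)$ the needed argument (Lemma~\ref{lem:error1DH-}) is substantially more delicate than ``piecewise H\"older estimates'': it requires splitting $\varepsilon_{\lambda,t_n}^{\Delta t}(s)-\varepsilon_{\lambda,t_n}^{\Delta t}(\tau)$ and the case analysis leading to~\eqref{eq:claim_error_1} and~\eqref{eq:claim_error_2}. In short, your proposal becomes a complete and correct proof, identical to the paper's, once the second step is replaced by an explicit invocation of Lemmas~\ref{lem:error1DH+} and~\ref{lem:error1DH-}; as written, the justification of that step would fail.
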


\begin{proof}[Proof of Proposition~\ref{propo:errorZ}]
Owing to the decompositions~\eqref{eq:decompZH} and~\eqref{eq:decompZnH} of $Z^H(t_n)$ and of $Z_n^H$ respectively, for all $n\ge 0$ one has
\[
Z^H(t_n)-Z_n^H=\sum_{k\in\N}\bigl(\Z_k^H(t_n)-\Z_{k,n}^H\bigr)e_k.
\]
It suffices to consider the case $p=1$ since the $\HH$-valued random variable $Z^H(t_n)-Z_n^H$ is Gaussian. One obtains
\[
\E[\|Z^H(t_n)-Z_n^H\|^2]=\sum_{k\in\N}\E[|\Z_k^H(t_n)-\Z_{k,n}^H|^2].
\]
Let $\alpha\in(0,2H-1/2)$. Applying the inequality~\eqref{eq:error1DH+} from Lemma~\ref{lem:error1DH+} when $H\in(1/2,1)$ or the inequality~\eqref{eq:error1DH-} from Lemma~\ref{lem:error1DH-} when $H\in(1/4,1/2)$, one has
\begin{align*}
\E[\|Z^H(t_n)-Z_n^H\|^2]\le C_{\alpha,H}\sum_{k\in\N}\lambda_k^{\alpha-2H}\Delta t^{\alpha}.
\end{align*}
Since one has $\sum_{k\in\N}\lambda_k^{\alpha-2H}<\infty$ under the condition $\alpha\in(0,2H-1/2)$, one obtains the inequality~\eqref{eq:errorZ} and the proof of Proposition~\ref{propo:errorZ} is thus completed.
\end{proof}

Proposition~\ref{propo:momentbounds} provides moment bounds in $\EE$ on the numerical solution $X_n$.

\begin{propo}\label{propo:momentbounds}
For all $p\in\N$, $\alpha\in(0,2H-1/2)$, $T\in(0,\infty)$ and $x_0\in\EE$, there exists a positive real number $C_{p,H,\alpha}(T,\|x_0\|_{\EE})\in(0,\infty)$ such that, for all $\Delta t=T/N$ with $N\in\N$, one has
\begin{equation}\label{eq:momentbounds}
\underset{n=0,\ldots,N}\sup~\bigl(\E[\|X_n\|_{\EE}^p]\bigr)^{\frac{1}{p}}\le C_{p,H}(T,\|x_0\|_\EE).
\end{equation}
\end{propo}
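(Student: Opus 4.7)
The plan is to reduce to the shifted process $Y_n=X_n-Z_n^H$, establish $L^{2p}((0,1);\R)$ moment bounds on $Y_n$ by a pointwise energy argument, and then upgrade these to $E$-norm bounds using the discrete mild formulation together with a Sobolev embedding. Using the recursion for $Z_n^H$, the shifted process satisfies
\[
Y_{n+1}=S(\Delta t)\bigl(\Phi_{\Delta t}(Y_n+Z_n^H)-Z_n^H\bigr)+A^{-1}\bigl(I-S(\Delta t)\bigr)G(Y_n+Z_n^H),\qquad Y_0=x_0.
\]
Since $\|X_n\|_E\le \|Y_n\|_E+\|Z_n^H\|_E$ and the moments of $\|Z_n^H\|_E$ are controlled by Proposition~\ref{propo:boundZnH}, it is enough to bound $\E[\|Y_n\|_E^p]$ uniformly in $n$ and $\Delta t$.

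For the $L^{2p}$ bound, the main ingredient is a pointwise ODE estimate: applying Gronwall's lemma to $s\mapsto(\phi_s(a)-b)^{2p}$ together with the one-sided Lipschitz property from Assumption~\ref{ass:f} and Young's inequality yields, for all $a,b\in\R$ and every integer $p\ge 1$,
\[
|\phi_{\Delta t}(a)-b|^{2p}\le e^{C_p\Delta t}\bigl(|a-b|^{2p}+\Delta t\,|f(b)|^{2p}\bigr).
\]
Setting $a=Y_n(\xi)+Z_n^H(\xi)$ and $b=Z_n^H(\xi)$, then integrating over $\xi\in(0,1)$, gives $L^{2p}$-control of $\Phi_{\Delta t}(Y_n+Z_n^H)-Z_n^H$. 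Combining this with the $L^{2p}$-contractivity of the Dirichlet heat semigroup, the bound $\|A^{-1}(I-S(\Delta t))\|_{\mathcal L(L^{2p})}\le\Delta t$ derived from $A^{-1}(I-S(\Delta t))=\int_0^{\Delta t}S(u)\,du$, Assumption~\ref{ass:g}, and the convexity inequality $|a+b|^{2p}\le(1+\Delta t)^{2p-1}|a|^{2p}+C_p\Delta t^{-(2p-1)}|b|^{2p}$ tuned so that the $G$-contribution enters with only a single factor of $\Delta t$, one arrives at the one-step estimate
\[
\|Y_{n+1}\|_{L^{2p}}^{2p}\le (1+C_p\Delta t)\|Y_n\|_{L^{2p}}^{2p}+C_p\Delta t\bigl(1+\|Z_n^H\|_E^{2p(2q+1)}\bigr).
\]
Iterating via discrete Gronwall, using $n\Delta t\le T$, taking expectations, and invoking Proposition~\ref{propo:boundZnH} then gives $\sup_n\E[\|Y_n\|_{L^{2p}}^{2p}]<\infty$ for every integer $p\ge 1$.

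To upgrade to the $E$-norm, I would use the discrete mild formulation
\[
Y_n=S(t_n)x_0+\Delta t\sum_{j=0}^{n-1}S(t_n-t_j)\Psi_{\Delta t}(X_j)+\sum_{j=0}^{n-1}A^{-1}(I-S(\Delta t))S(t_n-t_{j+1})G(X_j).
\]
The first term is controlled by $\|x_0\|_E$ via the $E$-contractivity of $S(t)$. For the two sums, I would apply $A^{\alpha/2}$ for some $\alpha\in(1/2,2)$, use the smoothing estimate $\|A^{\alpha/2}S(t)\|_{\mathcal L(\HH)}\le Ct^{-\alpha/2}$ together with the Riemann-sum convergence of $\Delta t\sum_j(t_n-t_j)^{-\alpha/2}$, the polynomial growth $\|\Psi_{\Delta t}(X_j)\|\le C(1+\|X_j\|_{L^{2(2q'+1)}}^{2q'+1})$ derived from Lemma~\ref{lem:flow}, the Lipschitz bound on $G$, and the $L^{2p}$ moment bounds from the previous step (applied with $p$ enlarged to absorb the polynomial growth exponent). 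Minkowski's inequality then yields $\sup_n\E[\|Y_n-S(t_n)x_0\|_{\HH^\alpha}^{p}]<\infty$, and the Sobolev embedding~\eqref{eq:Sobolevembedding} delivers the claimed moment bounds on $\|Y_n\|_E$, and hence on $\|X_n\|_E$. The main obstacle is the $L^{2p}$ energy estimate: the convexity inequality must be tuned so that the $G$-contribution enters with a single power of $\Delta t$ rather than $\Delta t^{2p}$, otherwise the discrete Gronwall iteration produces terms that blow up as $\Delta t\to 0$.
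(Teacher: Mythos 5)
Your proof is correct, but it takes a genuinely different and considerably longer route than the paper's. The paper also reduces to $Y_n=X_n-Z_n^H$, but it then adds and subtracts $\Phi_{\Delta t}(Z_n^H)$ and $G(Z_n^H)$ and works in a single step directly in the $E$-norm: the first estimate of Lemma~\ref{lem:flow} says that $\Phi_{\Delta t}$ is globally Lipschitz on $E$ with constant $e^{C\Delta t}$ (this is exactly your pointwise Gronwall argument, packaged once and for all in that lemma), the leftover terms $\Delta t S(\Delta t)\Psi_{\Delta t}(Z_n^H)$ and $A^{-1}\bigl(I-S(\Delta t)\bigr)G(Z_n^H)$ are controlled by the polynomial growth of $\Psi_{\Delta t}$ and by Proposition~\ref{propo:boundZnH}, and one obtains the one-step recursion $\|Y_{n+1}\|_{\EE}\le \bigl(e^{C\Delta t}+C\Delta t\bigr)\|Y_n\|_{\EE}+C\Delta t\bigl(1+\|Z_n^H\|_{\EE}^{2q'+1}\bigr)$, which is then iterated. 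Your two-stage scheme --- an $L^{2p}$ energy estimate via the pointwise inequality $|\phi_{\Delta t}(a)-b|^{2p}\le e^{C_p\Delta t}\bigl(|a-b|^{2p}+\Delta t|f(b)|^{2p}\bigr)$, followed by a bootstrap to $E$ through the discrete mild formulation, the smoothing bound $\|A^{\alpha/2}S(t)\|_{\mathcal L(\HH)}\le Ct^{-\alpha/2}$ and the embedding~\eqref{eq:Sobolevembedding} --- is sound: the Gronwall/Young step is valid, your convexity tuning does leave a single power of $\Delta t$ on the $G$-term (since that term already carries $\Delta t^{2p}$ from $\|A^{-1}(I-S(\Delta t))\|_{\mathcal L(L^{2p})}\le\Delta t$), the Riemann sums $\Delta t\sum_{j}(t_n-t_j)^{-\alpha/2}$ converge for $\alpha<2$, and enlarging $p$ absorbs the polynomial growth exponents. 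Two remarks. First, you invoke the contractivity of $S(t)$ on $L^{2p}((0,1);\R)$, which the paper never states (it only records contractivity on $\HH$ and on $E$); this is true, since the Dirichlet heat semigroup is sub-Markovian and hence a contraction on every $L^r$, but it is an extra ingredient that should be justified. Second, your pointwise estimate, taken with a supremum over $\xi\in[0,1]$ instead of an integral (i.e., in first powers, $|\phi_{\Delta t}(a)-b|\le e^{C\Delta t}|a-b|+C\Delta te^{C\Delta t}|f(b)|$), already yields the paper's one-step $E$-norm recursion, so the entire $L^{2p}$/smoothing/embedding bootstrap, while correct, is an avoidable detour; what it buys is robustness, since it would survive in settings where the stochastic convolution is only controlled in $L^p$-type norms rather than in $E$.
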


\begin{proof}[Proof of Proposition~\ref{propo:momentbounds}]
Introduce the auxiliary process $\bigl(Y_n\bigr)_{0\le n\le N}$ defined by
\[
Y_n=X_n-Z_n^H,\qquad \forall~n\in\{0,\ldots,N\}.
\]
Recall that $\Phi_{\Delta t}-I=\Delta t\Psi_{\Delta t}$. Then, for all $n\in\{0,\ldots,N-1\}$, one has
\begin{align*}
Y_{n+1}&=X_{n+1}-Z_{n+1}^H\\
&=S(\Delta t)\bigl[\Phi_{\Delta t}(X_n)-Z_n^H\bigr]+A^{-1}\bigl(I-S(\Delta t)\bigr)G(X_n)\\
&=S(\Delta t)\bigl[\Phi_{\Delta t}(Y_n+Z_n^H)-\Phi_{\Delta t}(Z_n^H)\bigr]+A^{-1}\bigl(I-S(\Delta t)\bigr)\bigl[G(Y_n+Z_n^H)-G(Z_n^H)\bigr]\\
&+\Delta tS(\Delta t)\Psi_{\Delta t}(Z_n^H)+A^{-1}\bigl(I-S(\Delta t)\bigr)G(Z_n^H).
\end{align*}
Observe that
\[
A^{-1}\bigl(I-S(\Delta t)\bigr)=\int_{0}^{\Delta t}S(s)\dd s,
\]
therefore
\begin{equation}\label{eq:boundAS}
\|A^{-1}\bigl(I-S(\Delta t)\bigr)\|_{\mathcal{L}(\EE)}\le \int_{0}^{\Delta t}\|S(s)\|_{\mathcal{L}(\EE)}\dd s\le \Delta t.
\end{equation}
Owing to the global Lipschitz continuity property of $\Phi_{\Delta t}$ and to the local Lipschitz continuity property of $\Psi_{\Delta t}$ stated in Lemma~\ref{lem:flow}, and recalling that $G$ is globally Lipschitz continuous, one has the following: there exists $C\in(0,\infty)$ such that one has
\[
\|Y_{n+1}\|_{\EE}\le \bigl[e^{C\Delta t}+C\Delta t\bigr]\|Y_n\|_{\EE}+C\Delta t\bigl(1+\|Z_n^H\|_{\EE}^{2q'+1}\bigr).
\]
Note that one has $e^{C\Delta t}+C\Delta t\le e^{2C\Delta t}$. For all $p\in\N$, owing to the Minkowski inequality, and applying the moment bounds~\eqref{eq:momentsexact} in $\EE$ for $Z_n^H$ from Proposition~\ref{propo:boundZnH}, there exists $C_{p,H}\in(0,\infty)$ such that for all $n\in\{0,\ldots,N-1\}$ one has
\[
\bigl(\E[\|Y_{n+1}\|_{\EE}^{p}]\bigr)^{\frac{1}{p}}\le e^{2C\Delta t}\bigl(\E[\|Y_{n}\|_{\EE}^{p}]\bigr)^{\frac{1}{p}}+C_{p,H}\Delta t.
\]
Since $Y_0=x_0$, it is straightforward to obtain the following inequality: for all $n\in\{0,\ldots,N-1\}$ one has
\[
\bigl(\E[\|Y_{n}\|_{\EE}^{p}]\bigr)^{\frac{1}{p}}\le e^{2Cn\Delta t}\|x_0\|_{\EE}+C_{p,H}n\Delta te^{2CN\Delta t}\le e^{2CT}\|x_0\|_{\EE}+C_{p,H}e^{2CT}T.
\]
This yields the inequality~\eqref{eq:momentbounds} and the proof of Proposition~\ref{propo:momentbounds} is completed.
\end{proof}

To perform the error analysis, let us introduce the auxiliary discrete-time process $\bigl(\tilde{X}_n\bigr)_{0\le n\le N}$ defined by
\begin{equation}\label{eq:tildeXn}
\tilde{X}_n=S(t_n)x_0+\Delta t\sum_{j=0}^{n-1}S(t_n-t_j)F(X(t_j))+\sum_{j=0}^{n-1}A^{-1}\bigl(I-S(\Delta t)\bigr)S(t_n-t_{j+1})G(X(t_j))+Z_n^H.
\end{equation}
Note that for all $n\in\{0,\ldots,N-1\}$, one has
\[
\tilde{X}_{n+1}=S(\Delta t)\tilde{X}_n+\Delta tS(\Delta t)F(X(t_n))+A^{-1}\bigl(I-S(\Delta t)\bigr)G(X(t_n))+S(\Delta t)\Delta B_n^H.
\]

First, one obtains moment bounds in $\EE$ for $\tilde{X}_n$.
\begin{propo}\label{propo:boundXtilde}
For all $p\in\N$, $\alpha\in(0,2H-1/2)$, $T\in(0,\infty)$ and $x_0\in\EE$, there exists a positive real number $C_{p,H,\alpha}(T,\|x_0\|_{\EE})\in(0,\infty)$ such that, for all $\Delta t=T/N$ with $N\in\N$, one has
\begin{equation}\label{eq:boundXtilde}
\underset{n=0,\ldots,N}\sup~\bigl(\E[\|\tilde{X}_n\|_{\EE}^p]\bigr)^{\frac{1}{p}}\le C_{p,H}(T,\|x_0\|_\EE).
\end{equation}
\end{propo}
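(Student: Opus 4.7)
The plan is to apply the triangle inequality in $\EE$ to the four terms in the definition~\eqref{eq:tildeXn} of $\tilde{X}_n$ and control each one using Proposition~\ref{propo:semigroup}, Proposition~\ref{propo:exact}, and Proposition~\ref{propo:boundZnH}. Since the scheme is explicit in $X(t_j)$ (and not in $\tilde{X}_j$), no Gronwall argument is needed: the bound on $\tilde{X}_n$ follows directly from bounds on the already-controlled exact solution $X$ and fractional Ornstein--Uhlenbeck approximation $Z_n^H$.

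First, since $\|S(t)\|_{\mathcal{L}(\EE)}\le 1$, the linear term is trivially bounded by $\|x_0\|_\EE$. For the $F$-term, I would use the contraction property $\|S(t_n-t_j)F(X(t_j))\|_\EE\le \|F(X(t_j))\|_\EE$, together with the polynomial growth of $f$ (which is a consequence of Assumption~\ref{ass:f} by taking $z_1=0$), giving $\|F(X(t_j))\|_\EE\le C(1+\|X(t_j)\|_\EE^{2q+1})$. Taking $L^p(\Omega)$-norms and applying the moment bound~\eqref{eq:momentsexact} from Proposition~\ref{propo:exact} with parameter $p(2q+1)$ then shows that $\Delta t\sum_{j=0}^{n-1}\bigl(\E[\|S(t_n-t_j)F(X(t_j))\|_\EE^p]\bigr)^{1/p}$ is bounded by $T$ times a constant depending on $T$ and $\|x_0\|_\EE$. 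For the $G$-term, I would combine the contraction property of $S(t_n-t_{j+1})$ on $\EE$ with the bound $\|A^{-1}(I-S(\Delta t))\|_{\mathcal{L}(\EE)}\le \Delta t$ already established in~\eqref{eq:boundAS}, and the linear growth of $g$ that follows from Assumption~\ref{ass:g}, to reduce the sum to $\Delta t\sum_{j=0}^{n-1}C_g(1+\|X(t_j)\|_\EE)$, which is again controlled by Proposition~\ref{propo:exact}. Finally, the term $Z_n^H$ is bounded uniformly in $n$ and $\Delta t$ by Proposition~\ref{propo:boundZnH}.

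Summing the four contributions via Minkowski's inequality in $L^p(\Omega;\EE)$ gives the desired uniform bound~\eqref{eq:boundXtilde}, with a constant of the form $C_{p,H}(T,\|x_0\|_\EE)$ that is polynomial in $\|x_0\|_\EE$ of degree $2q+1$. I do not expect any real obstacle here: the only mild subtlety is to make sure the polynomial growth exponent is handled correctly when applying Proposition~\ref{propo:exact} (one needs moments of order $p(2q+1)$ of $\|X(t_j)\|_\EE$, which is fine since that result holds for all $p\in\N$). Everything else is routine summation of Riemann-type sums bounded by $T$ and use of semigroup contractivity on $\EE$.
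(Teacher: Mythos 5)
Your proposal is correct and follows essentially the same route as the paper's proof: decompose $\tilde{X}_n$ via the definition~\eqref{eq:tildeXn}, apply Minkowski's inequality, bound the four terms using semigroup contractivity on $\EE$, the polynomial growth of $f$ and linear growth of $g$, the bound~\eqref{eq:boundAS}, and the moment estimates~\eqref{eq:momentsexact} and~\eqref{eq:boundZnH} from Propositions~\ref{propo:exact} and~\ref{propo:boundZnH}. Your observation that no Gronwall argument is needed (since $\tilde{X}_n$ depends on the exact solution $X(t_j)$ rather than on $\tilde{X}_j$) is exactly the point that makes this proof immediate, and your growth exponent $2q+1$ is in fact slightly more careful than the exponent $2q$ written in the paper's display.
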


\begin{proof}[Proof of Proposition~\ref{propo:boundXtilde}]
The mapping $f$ has at most polynomial growth owing to Assumption~\ref{ass:f}, and the mapping $g$ is globally Lipschitz continuous and thus has at most linear growth. For all $p\in\N$, applying the Minkowski inequality and using the upper bound~\eqref{eq:boundAS}, for all $n\in\{0,\ldots,N\}$, one has
\[
\bigl(\E[\|\tilde{X}_n\|_{\EE}^p]\bigr)^{\frac{1}{p}}\le \|x_0\|_{\EE}+C\Delta t\sum_{j=0}^{n-1}\Bigl(1+\bigl(\E[\|X(t_j)\|_{\EE}^{2qp}]\bigr)^{\frac{1}{p}}\Bigr)+C\Delta t\sum_{j=0}^{n-1}\Bigl(1+\bigl(\E[\|X(t_j)\|_{\EE}^{p}]\bigr)^{\frac{1}{p}}\Bigr)+\bigl(\E[\|Z_n^{H}\|_{\EE}^{p}]\bigr)^{\frac{1}{p}}.
\]
Applying the moment bounds~\eqref{eq:momentsexact} in $\EE$ for the exact solution $X(t)$ stated in Proposition~\ref{propo:exact} and for the Gaussian numerical solution $Z_n^H$ stated in Proposition~\ref{propo:boundZnH}, one obtains the inequality~\eqref{eq:boundXtilde} and the proof of Proposition~\ref{propo:boundXtilde} is completed.
\end{proof}

In order to prove Theorem~\ref{theo:main}, it then remains to state and prove Propositions~\ref{propo:errorXtildeXexact} and~\ref{propo:errorXtildeXnum}, which provide error estimates on $\tilde{X}_n-X(t_n)$ and $X_n-\tilde{X}_n$ respectively.

\begin{propo}\label{propo:errorXtildeXexact}
For all $p\in\N$, $\alpha\in(0,2H-1/2)$, $T\in(0,\infty)$ and $x_0\in\HH^\alpha \cap \EE$, there exists a positive real number $C_{p,H,\alpha}(T,\|x_0\|_{\EE},\|x_0\|_{\HH^\alpha})\in(0,\infty)$ such that, for all $\Delta t=T/N$ with $N\in\N$, one has
\begin{equation}\label{eq:errorXtildeXexact}
\underset{n=0,\ldots,N}\sup~\bigl(\E[\|\tilde{X}_n-X(t_n)\|_{\HH}^{p}]\bigr)^{\frac{1}{p}}\le C_{p,H,\alpha}(T,\|x_0\|_{\EE},\|x_0\|_{\HH^\alpha})\Delta t^{\frac{\alpha}{2}}.
\end{equation}
\end{propo}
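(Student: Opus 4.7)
The plan is to write $\tilde X_n-X(t_n)$ as the sum of three independent quadrature errors, one for each ingredient of the mild formulation~\eqref{eq:mild}, and to bound each separately in $L^p(\Omega;\HH)$. The key preliminary step is the semigroup identity
\[
A^{-1}\bigl(I-S(\Delta t)\bigr)S(t_n-t_{j+1})=\int_{t_j}^{t_{j+1}}S(t_n-s)\,\dd s,
\]
which rewrites the $G$-part of~\eqref{eq:tildeXn} as $\sum_{j}\int_{t_j}^{t_{j+1}}S(t_n-s)G(X(t_j))\,\dd s$. Subtracting~\eqref{eq:mild} at $t_n$ then yields the decomposition $\tilde X_n-X(t_n)=\mathcal E_F^n+\mathcal E_G^n+\bigl(Z_n^H-Z^H(t_n)\bigr)$, where
\[
\mathcal E_F^n=\sum_{j=0}^{n-1}\int_{t_j}^{t_{j+1}}\bigl[S(t_n-t_j)F(X(t_j))-S(t_n-s)F(X(s))\bigr]\dd s,
\]
\[
\mathcal E_G^n=\sum_{j=0}^{n-1}\int_{t_j}^{t_{j+1}}S(t_n-s)\bigl[G(X(t_j))-G(X(s))\bigr]\dd s.
\]
The noise error $Z_n^H-Z^H(t_n)$ is immediately dispatched by Proposition~\ref{propo:errorZ}.

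For $\mathcal E_G^n$, I would combine Assumption~\ref{ass:g}, the contractivity $\|S(t_n-s)\|_{\mathcal L(\HH)}\le 1$ from Proposition~\ref{propo:semigroup}, the Hölder bound~\eqref{eq:incrementsexact}, and Minkowski's inequality in $L^p(\Omega)$ to control it by $L_g T\,\Delta t^{\alpha/2}$. For $\mathcal E_F^n$ I would further split the bracket into a ``time-freezing'' piece $\bigl[S(t_n-t_j)-S(t_n-s)\bigr]F(X(t_j))$ and a ``drift-freezing'' piece $S(t_n-s)\bigl[F(X(t_j))-F(X(s))\bigr]$. The drift-freezing piece is handled via the polynomial local-Lipschitz estimate of Assumption~\ref{ass:f}, a Hölder split $\|(1+\|X\|_\EE^{2q})\|X(s)-X(t_j)\|\|_{L^p}\le \|1+\|X\|_\EE^{2q}\|_{L^{2p}}\|X(s)-X(t_j)\|_{L^{2p}}$ in $\Omega$, and the combination of~\eqref{eq:momentsexact}--\eqref{eq:incrementsexact} from Proposition~\ref{propo:exact}. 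On the time-freezing piece I would use~\eqref{eq.11} with $\rho=\alpha/2$, giving $\|S(t_n-t_j)-S(t_n-s)\|_{\mathcal L(\HH)}\le C(s-t_j)^{\alpha/2}(t_n-s)^{-\alpha/2}$ for $j\le n-2$, together with the uniform moment bound on $F(X(t_j))$ that follows from polynomial growth and~\eqref{eq:momentsexact}. The resulting discrete integral is bounded by
\[
\Delta t^{\alpha/2}\int_0^{t_n}(t_n-s)^{-\alpha/2}\dd s\le C T^{1-\alpha/2}\Delta t^{\alpha/2},
\]
plus a boundary contribution from $j=n-1$ which, after the change of variable $s=t_{n-1}+u\Delta t$, reduces to a finite beta integral multiplied by $\Delta t$.

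The main obstacle lies in the time-freezing piece of $\mathcal E_F^n$, where the \emph{non}-globally Lipschitz, polynomially growing nonlinearity $F$ must be combined with the \emph{singular} semigroup Hölder bound $(t_n-s)^{-\alpha/2}$ coming from~\eqref{eq.11}. The polynomial growth is neutralised by the $L^p$/$L^{2p}$ Hölder split described above, which makes essential use of the uniform $\EE$-moments of $X$ from Proposition~\ref{propo:exact}; the time singularity is integrable because the constraint $\alpha\in(0,2H-1/2)$ forces $\alpha/2<1$, so the kernel $(t_n-s)^{-\alpha/2}$ is Lebesgue-integrable on $[0,t_n]$. Once these two difficulties are overcome, collecting the three contributions $\mathcal E_F^n$, $\mathcal E_G^n$ and $Z_n^H-Z^H(t_n)$ yields the announced global rate $\Delta t^{\alpha/2}$, with the dependence on $x_0$ tracked through the $\HH^\alpha\cap\EE$ bounds of Proposition~\ref{propo:exact}.
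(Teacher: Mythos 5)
Your proposal is correct and follows essentially the same route as the paper's proof: the same semigroup identity $A^{-1}(I-S(\Delta t))S(t_n-t_{j+1})=\int_{t_j}^{t_{j+1}}S(t_n-s)\,\dd s$, the same error decomposition (your $\mathcal{E}_F^n$ is just the paper's time-freezing and drift-freezing terms grouped before splitting), and the same ingredients — Proposition~\ref{propo:errorZ} for the noise term, the bound~\eqref{eq.11} with the integrable singularity $(t_n-s)^{-\alpha/2}$ plus polynomial growth and the $\EE$-moment bounds for the time-freezing piece, and the Cauchy--Schwarz/moment-bound combination from Proposition~\ref{propo:exact} for the drift-freezing and $G$ pieces. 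The only cosmetic difference is your separate treatment of the $j=n-1$ boundary contribution, which the paper absorbs into the single convergent integral $\int_0^{t_n}(t_n-s)^{-\alpha/2}\dd s$.
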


\begin{proof}[Proof of Proposition~\ref{propo:errorXtildeXexact}]
Observe that for all $j\in\{0,\ldots,N-1\}$, one has
\[
A^{-1}\bigl(I-S(\Delta t)\bigr)S(t_n-t_{j+1})=\int_{t_j}^{t_{j+1}}S(t_{j+1}-s)S(t_n-t_{j+1})\dd s=\int_{t_j}^{t_{j+1}}S(t_n-s)\dd s.
\]
As a result, for all $n\in\{0,\ldots,N\}$, one has the following expression for $\tilde{X}_n$:
\[
\tilde{X}_n=S(t_n)x_0+\sum_{j=0}^{n-1}\int_{t_j}^{t_{j+1}}S(t_n-t_j)F(X(t_j))\dd s+\sum_{j=0}^{n-1}\int_{t_j}^{t_{j+1}}S(t_n-s)G(X(t_j))\dd s+Z_n^H.
\]
Moreover, for all $n\in\{0,\ldots,N\}$, one has the following expression for $X(t_n)$:
\[
X(t_n)=S(t_n)x_0+\sum_{j=0}^{n-1}\int_{t_j}^{t_{j+1}}S(t_n-s)F(X(s))\dd s+\sum_{j=0}^{n-1}\int_{t_j}^{t_{j+1}}S(t_n-s)G(X(s))\dd s+Z^H(t_n).
\]
Therefore, the error $\tilde{X}_n-X(t_n)$ can be decomposed as follows: one has
\begin{equation}
\tilde{X}_n-X(t_n)=e_n^{1}+e_n^{2}+e_n^{3}+e_n^{4},
\end{equation}
where the error terms $e_n^{1}$, $e_n^{2}$, $e_n^{3}$ and $e_n^{4}$ are defined as
\begin{align*}
e_n^{1}&=\sum_{j=0}^{n-1}\int_{t_j}^{t_{j+1}}\bigl[S(t_n-t_j)-S(t_n-s)\bigr]F(X(t_j))\dd s\\
e_n^{2}&=\sum_{j=0}^{n-1}\int_{t_j}^{t_{j+1}}S(t_n-s)\bigl[F(X(t_j))-F(X(s))\bigr]\dd s\\
e_n^{3}&=\sum_{j=0}^{n-1}\int_{t_j}^{t_{j+1}}S(t_n-s)\bigl[G(X(t_j))-G(X(s))\bigr]\dd s\\
e_n^{4}&=Z_n^H-Z^H(t_n).
\end{align*}
To deal with the error term $e_n^1$, recall that owing to Assumption~\ref{ass:f} the mapping $f$ has at most polynomial growth. Let $\alpha\in(0,2H-1/2)$. Owing to Proposition~\ref{propo:semigroup}, note that for all $j\in\{0,\ldots,n-1\}$ and $s\in[t_j,t_{j+1})$, one has
\begin{align*}
\|\bigl[S(t_n-t_j)-S(t_n-s)\bigr]F(X(t_j))\|_{\HH}&\le C_\alpha\frac{(s-t_j)^{\frac{\alpha}{2}}}{(t_n-s)^{\frac{\alpha}{2}}}\|F(X(t_j))\|_{\HH}\\
&\le C_\alpha\frac{\Delta t^{\frac{\alpha}{2}}}{(t_n-s)^{\frac{\alpha}{2}}}\Bigl(1+\|X(t_j)\|_{\EE}^{2q}\Bigr).
\end{align*}
For all $p\in\N$, applying the Minkowski inequality and the moment bounds~\eqref{eq:momentsexact} in $\EE$ for the exact solution from Proposition~\ref{propo:exact}, one obtains for all $n\in\{0,\ldots,N\}$
\begin{align*}
\bigl(\E[\|e_n^1\|_{\HH}^{p}]\bigr)^{\frac{1}{p}}&\le C_{\alpha}\Delta t^{\frac{\alpha}{2}}\underset{0\le j\le n}\sup~\Bigl(1+\bigl(\E[\|X(t_j)\|_{\EE}^{2qp}]\bigr)^{\frac{1}{p}}\Bigr)\sum_{j=0}^{n-1}\int_{t_j}^{t_{j+1}}\frac{1}{(t_n-s)^{\frac{\alpha}{2}}}\dd s\\
&\le C_{p,\alpha}(T,\|x_0\|_{\EE})\Delta t^{\frac{\alpha}{2}}.
\end{align*}
To deal with the error term $e_n^2$, one employs the local Lipschitz continuity property of $f$ from Assumption~\ref{ass:f}: for all $j\in\{0,\ldots,N-1\}$ and all $s\in[t_j,t_{j+1}]$ one has
\[
\|S(t_n-s)\bigl[F(X(t_j))-F(X(s))\bigr]\|_{\HH}\le C\bigl(1+\|X(t_j)\|_{\EE}^{2q}+\|X(s)\|_{\EE}^{2q}\bigr)\|X(s)-X(t_j)\|_{\HH}.
\]
Let $p\in\N$. Applying the Minkowski and the Cauchy--Schwarz inequalities, one has
\begin{align*}
\bigl(\E[\|e_n^2\|_{\HH}^{p}]\bigr)^{\frac{1}{p}}&\le C\sum_{j=0}^{n-1}\int_{t_j}^{t_{j+1}}\bigl(\E\bigl[\bigl(1+\|X(t_j)\|_{\EE}^{2qp}+\|X(s)\|_{\EE}^{2qp}\bigr)\|X(s)-X(t_j)\|_{\HH}^{p}\bigr]\bigr)^{\frac{1}{p}}\dd s\\
&\le C\sum_{j=0}^{n-1}\int_{t_j}^{t_{j+1}}\bigl(\E\bigl[\bigl(1+\|X(t_j)\|_{\EE}^{4qp}+\|X(s)\|_{\EE}^{4qp}\bigr)\bigr]\bigr)^{\frac{1}{2p}}\bigl(\E[\|X(s)-X(t_j)\|_{\HH}^{2p}\bigr]\bigr)^{\frac{1}{2p}}\dd s.
\end{align*}
Applying the moment bounds~\eqref{eq:momentsexact} in $\EE$ on the exact solution and the inequality~\eqref{eq:incrementsexact} from Proposition~\ref{propo:exact}, one obtains the upper bound
\[
\bigl(\E[\|e_n^2\|_{\HH}^{p}]\bigr)^{\frac{1}{p}}\le C_{p,H,\alpha}(T,\|x_0\|_{\EE},\|x_0\|_{\HH^\alpha})\Delta t^{\frac{\alpha}{2}}.
\]
To deal with the error term $e_n^3$, recall that $G$ is globally Lipschitz continuous owing to Assumption~\ref{ass:g}. Let $p\in\N$. Applying the Minkowski inequality and the inequality~\eqref{eq:incrementsexact} from Proposition~\ref{propo:exact}, one obtains the upper bounds
\begin{align*}
\bigl(\E[\|e_n^3\|_{\HH}^{p}]\bigr)^{\frac{1}{p}}&\le C\sum_{j=0}^{n-1}\int_{t_j}^{t_{j+1}}\bigl(\E[\|X(s)-X(t_j)\|_{\HH}^{p}\bigr]\bigr)^{\frac{1}{p}}\dd s\\
&\le C_{p,H,\alpha}(T,\|x_0\|_{\EE},\|x_0\|_{\HH^\alpha})\Delta t^{\frac{\alpha}{2}}
\end{align*}
Finally, for the error term $e_n^4$, it suffices to apply the error estimate~\eqref{eq:errorZ} from Proposition~\ref{propo:errorZ}: one obtains
\[
\bigl(\E[\|e_n^4\|_{\HH}^{p}]\bigr)^{\frac{1}{p}}\le C_\alpha\Delta t^{\frac{\alpha}{2}}.
\]
Gathering the estimates on the error terms $e_n^{1}$, $e_n^{2}$, $e_n^{3}$ and $e_n^{4}$ then provides the error estimate~\eqref{eq:errorXtildeXexact} and the proof of Proposition~\ref{propo:errorXtildeXexact} is completed.
\end{proof}

\begin{propo}\label{propo:errorXtildeXnum}
For all $p\in\N$, $\alpha\in(0,2H-1/2)$, $T\in(0,\infty)$ and $x_0\in\HH^\alpha \cap \EE$, there exists a positive real number $C_{p,H,\alpha}(T,\|x_0\|_{\EE},\|x_0\|_{\HH^\alpha})\in(0,\infty)$ such that, for all $\Delta t=T/N$ with $N\in\N$, one has
\begin{equation}\label{eq:errorXtildeXnum}
\underset{n=0,\ldots,N}\sup~\bigl(\E[\|X_n-\tilde{X}_n\|_{\HH}^{p}]\bigr)^{\frac{1}{p}}\le C_{p,H,\alpha}(T,\|x_0\|_{\EE},\|x_0\|_{\HH^\alpha})\Delta t^{\frac{\alpha}{2}}.
\end{equation}
\end{propo}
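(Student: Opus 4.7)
The plan is to derive a recursion for $e_n=X_n-\tilde{X}_n$, to bound each of its constituent terms in $\HH$, and to conclude via a discrete Gronwall argument in $L^p(\Omega)$. The crucial observation is that, although $f$ is only one-sided Lipschitz continuous, the first inequality of Lemma~\ref{lem:flow} shows that the exact flow $\Phi_{\Delta t}$ is globally Lipschitz continuous with constant $e^{C\Delta t}$; one should therefore arrange the decomposition of $e_{n+1}$ so that the error $e_n$ appears as the argument of $\Phi_{\Delta t}$ rather than of the merely locally Lipschitz mapping $\Psi_{\Delta t}$.

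First I would subtract the recursion defining $\tilde{X}_{n+1}$ from that defining $X_{n+1}$. The stochastic increments $S(\Delta t)\Delta B_n^H$ cancel, and using the identity $\Phi_{\Delta t}(\tilde{X}_n)=\tilde{X}_n+\Delta t\,\Psi_{\Delta t}(\tilde{X}_n)$ together with $F=\Psi_0$, one obtains
\begin{align*}
e_{n+1}&=S(\Delta t)\bigl[\Phi_{\Delta t}(X_n)-\Phi_{\Delta t}(\tilde{X}_n)\bigr]+\Delta t\,S(\Delta t)\bigl[\Psi_{\Delta t}(\tilde{X}_n)-\Psi_{\Delta t}(X(t_n))\bigr]\\
&\quad+\Delta t\,S(\Delta t)\bigl[\Psi_{\Delta t}(X(t_n))-\Psi_0(X(t_n))\bigr]+A^{-1}\bigl(I-S(\Delta t)\bigr)\bigl[G(X_n)-G(X(t_n))\bigr].
\end{align*}
Second, each of these four contributions is controlled pathwise in the $\HH$-norm. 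Combining $\|S(\Delta t)\|_{\mathcal{L}(\HH)}\le 1$ from Proposition~\ref{propo:semigroup} with Lemma~\ref{lem:flow} bounds the first one by $e^{C\Delta t}\|e_n\|_{\HH}$. The local Lipschitz inequality from Lemma~\ref{lem:flow}, applied pointwise and integrated over $(0,1)$, bounds the second one by $C\Delta t\bigl(1+\|\tilde{X}_n\|_{\EE}^{2q'}+\|X(t_n)\|_{\EE}^{2q'}\bigr)\|\tilde{X}_n-X(t_n)\|_{\HH}$. The flow consistency inequality in Lemma~\ref{lem:flow} bounds the third by $C\Delta t^2\bigl(1+\|X(t_n)\|_{\EE}^{2q'+1}\bigr)$. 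Finally, Assumption~\ref{ass:g} together with $\|A^{-1}(I-S(\Delta t))\|_{\mathcal{L}(\HH)}\le C\Delta t$ (Proposition~\ref{propo:semigroup} with $\rho=1$) bounds the fourth by $C\Delta t\bigl(\|e_n\|_{\HH}+\|\tilde{X}_n-X(t_n)\|_{\HH}\bigr)$.

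Third, I would take $L^p(\Omega)$ norms using Minkowski's inequality. The polynomial $\EE$-prefactors are absorbed by Cauchy--Schwarz combined with the moment bounds of Propositions~\ref{propo:exact} and~\ref{propo:boundXtilde}, while the factors $\|\tilde{X}_n-X(t_n)\|_{\HH}$ are controlled by Proposition~\ref{propo:errorXtildeXexact} and contribute $\Delta t^{\alpha/2}$. The second and fourth terms thus yield contributions of order $\Delta t^{1+\alpha/2}$; the third term contributes $\Delta t^2$, which is dominated by $\Delta t^{1+\alpha/2}$ since $\alpha<2H-\tfrac12<\tfrac32$. One arrives at a recursion of the form
\[
\bigl(\E[\|e_{n+1}\|_{\HH}^{p}]\bigr)^{\frac{1}{p}}\le (1+C\Delta t)\bigl(\E[\|e_{n}\|_{\HH}^{p}]\bigr)^{\frac{1}{p}}+C_{p,H,\alpha}(T,\|x_0\|_{\EE},\|x_0\|_{\HH^\alpha})\,\Delta t^{1+\frac{\alpha}{2}}.
\]
Since $e_0=0$ and $N\Delta t=T$, the discrete Gronwall lemma produces the estimate~\eqref{eq:errorXtildeXnum}.

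The main obstacle is the initial structural observation: because the splitting scheme treats $F$ through the exact flow $\Phi_{\Delta t}$ (which inherits global Lipschitz continuity from the one-sided Lipschitz assumption on $f$), the error analysis can be carried out directly at the level of simple $L^p$ Gronwall estimates, avoiding the squared-norm manipulations that would otherwise be required to handle the polynomially growing Lipschitz constant of $\Psi_{\Delta t}$ when pairing $e_n$ with $\Psi_{\Delta t}(X_n)-\Psi_{\Delta t}(\tilde{X}_n)$. Once this structure is exploited in the decomposition, every remaining piece follows from results already established earlier in the section, namely the $\EE$-moment bounds on $X(t_n)$ and $\tilde{X}_n$, the strong error estimate on $\tilde{X}_n-X(t_n)$, and the standard semigroup estimates.
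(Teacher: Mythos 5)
Your proof is correct and takes essentially the same approach as the paper: the identical decomposition of $X_{n+1}-\tilde{X}_{n+1}$ exploiting $\Phi_{\Delta t}=I+\Delta t\Psi_{\Delta t}$ so that the error enters through the globally Lipschitz flow $\Phi_{\Delta t}$, the same pathwise bounds from Lemma~\ref{lem:flow}, Proposition~\ref{propo:semigroup} and Assumption~\ref{ass:g}, then Minkowski/Cauchy--Schwarz with the moment bounds of Propositions~\ref{propo:exact} and~\ref{propo:boundXtilde}, the error estimate on $\tilde{X}_n-X(t_n)$, and a discrete Gronwall argument from $e_0=0$. Your explicit citation of Proposition~\ref{propo:errorXtildeXexact} for the $\|\tilde{X}_n-X(t_n)\|_{\HH}$ factors is in fact the correct reference (the paper's text points to~\eqref{eq:incrementsexact}, which appears to be a slip), so your write-up is, if anything, slightly more precise on this point.
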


\begin{proof}[Proof of Proposition~\ref{propo:errorXtildeXnum}]
Recall that $F=\Psi_0$. For all $n\in\{0,\ldots,N-1\}$, one has
\[
X_{n+1}-\tilde{X}_{n+1}=S(\Delta t)\bigl(X_n-\tilde{X}_n\bigr)+\Delta tS(\Delta t)\bigl(\Psi_{\Delta t}(X_n)-\Psi_{0}(X_n)\bigr)+A^{-1}\bigl(I-S(\Delta t)\bigr)\bigl(G(X_n)-G(X(t_n)\bigr).
\]
Observe that one has
\begin{align*}
\Psi_{\Delta t}(X_n)-\Psi_{0}(X_n)&=\Psi_{\Delta t}(X_n)-\Psi_{\Delta t}(\tilde{X}_n)+\Psi_{\Delta t}(\tilde{X}_n)-\Psi_{\Delta t}(X(t_n))+\Psi_{\Delta t}(X(t_n))-\Psi_{0}(X(t_n))\\
G(X_n)-G(X(t_n)&=G(X_n)-G(\tilde{X}_n)+G(\tilde{X}_n)-G(X(t_n)).
\end{align*}
Recalling that $I+\Delta t\Psi_{\Delta t}=\Phi_{\Delta t}$, one thus obtains
\begin{align*}
X_{n+1}-\tilde{X}_{n+1}&=S(\Delta t)\bigl(\Phi_{\Delta t}(X_n)-\Phi_{\Delta t}(\tilde{X}_n)\bigr)\\
&+\Delta tS(\Delta t)\bigl(\Psi_{\Delta t}(\tilde{X}_n)-\Psi_{\Delta t}(X(t_n))\bigr)\\
&+\Delta tS(\Delta t)\bigl(\Psi_{\Delta t}(X(t_n))-\Psi_{0}(X(t_n))\bigr)\\
&+A^{-1}\bigl(I-S(\Delta t)\bigr)\bigl(G(X_n)-G(\tilde{X}_n)\bigr)\\
&+A^{-1}\bigl(I-S(\Delta t)\bigr)\bigl(G(\tilde{X}_n)-G(X(t_n))\bigr).
\end{align*}
Note that $\|A^{-1}\bigl(I-S(\Delta t)\bigr)\|_{\mathcal{L}(\HH)}\le \Delta t$. Owing to Lemma~\ref{lem:flow}, the mapping $\Phi_{\Delta t}$ is globally Lipschitz continuous, uniformly with respect to $\Delta t$. Recall also that $G$ is globally Lipschitz continuous. Moreover, applying the properties stated in Lemma~\ref{lem:flow} on the mapping $\Psi_{\Delta t}$, one obtains the following: there exists $C\in(0,\infty)$ such that one has
\begin{align*}
\|X_{n+1}-\tilde{X}_{n+1}\|_{\HH}&\le e^{C\Delta t}\|X_n-\tilde{X}_n\|_{\HH}\\
&+C\Delta t\bigl(1+\|\tilde{X}_n\|_{\EE}^{2q}+\|X(t_n)\|_{\EE}^{2q}\bigr)\|\tilde{X}_n-X(t_n)\|_{\HH}\\
&+C\Delta t^2\bigl(1+\|X(t_n)\|_{\EE}^{2q+1}\bigr)\\
&+C\Delta t\|X_n-\tilde{X}_n\|_{\HH}\\
&+C\Delta t\|\tilde{X}_n-X(t_n)\|_{\HH}
\end{align*}
Note that $e^{C\Delta t}+C\Delta t\le e^{2C\Delta t}$. Let $p\in\N$. Applying the Minkowski and the Cauchy--Schwarz inequalities, for all $n\in\{0,\ldots,N-1\}$ one has
\begin{align*}
\bigl(\E[\|X_{n+1}-\tilde{X}_{n+1}\|_{\HH}^{p}]\bigr)^{\frac{1}{p}}&\le e^{2C\Delta t}\bigl(\E[\|X_{n}-\tilde{X}_{n}\|_{\HH}^{p}]\bigr)^{\frac{1}{p}}\\
&+C\Delta t\Bigl(1+\bigl(\E[\|\tilde{X}_n\|_{\EE}^{4qp}]\bigr)^{\frac{1}{p}}+\bigl(\E[\|X(t_n)\|_{\EE}^{4qp}]\bigr)^{\frac{1}{2p}}\Bigr)\bigl(\E[\|\tilde{X}_n-X(t_n)\|_{\HH}^{2p}]\bigr)^{\frac{1}{2p}}\\
&+C\Delta t^2\Bigl(1+\bigl(\E[\|X(t_n)\|_{\EE}^{(2q+1)p}]\bigr) ^{\frac{1}{p}}\Bigr).
\end{align*}
Using the moment bounds~\eqref{eq:boundXtilde} for $\tilde{X}_n$ from Proposition~\ref{propo:boundXtilde} and~\eqref{eq:momentsexact} for $X(t_n)$ from Proposition~\ref{propo:exact}, and the inequality~\eqref{eq:incrementsexact} from Proposition~\ref{propo:exact}, one obtains the following upper bound: for all $n\in\{0,\ldots,N-1\}$
\[
\bigl(\E[\|X_{n+1}-\tilde{X}_{n+1}\|_{\HH}^{p}]\bigr)^{\frac{1}{p}}\le e^{2C\Delta t}\bigl(\E[\|X_{n}-\tilde{X}_{n}\|_{\HH}^{p}]\bigr)^{\frac{1}{p}}+C_{p,H,\alpha}(T,\|x_0\|_{\EE},\|x_0\|_{\HH^\alpha})\Delta t^{1+\frac{\alpha}{2}}.
\]
It is then straightforward to obtain the error estimate~\eqref{eq:errorXtildeXnum} and the proof of Proposition~\ref{propo:errorXtildeXnum} is completed.
\end{proof}

\begin{proof}[Proof of Theorem~\ref{theo:main}]
For all $n\in\{0,\ldots,N\}$, one has
\[
X(t_n)-X_n=X(t_n)-\tilde{X}_n+\tilde{X}_n-X_n.
\]
It thus suffices to combine the error estimates~\eqref{eq:errorXtildeXexact} and~\eqref{eq:errorXtildeXnum} from Propositions~\ref{propo:errorXtildeXexact} and~\ref{propo:errorXtildeXnum} respectively, in order to establish the error estimate~\eqref{eq:errorscheme}. The proof of Theorem~\ref{theo:main} is then completed.
\end{proof}

\section{Numerical experiments}\label{S.5}

In this section, we present several numerical experiments which illustrate the convergence result obtained in Theorem~\ref{theo:main}.
 
We consider the parabolic stochastic partial differential equation driven by fractional noise described as
\begin{equation*}
\left \{
\begin{aligned}
&\frac{\partial X(t,\xi)}{\partial t}=\varepsilon\frac{\partial^2X(t,\xi)}{\partial \xi^2}+f(X(t,\xi))+g(X(t,\xi))+\dot{B}^H(t,\xi), ~~t\in (0,1],~~\xi\in (0,1), ~~\varepsilon>0,\\
&X(0,\xi)=\sin(\pi \xi), ~~\xi\in (0,1),\\
&X(t,0)=X(t,1)=0, ~~t\in (0,1].
\end{aligned}\right.
\end{equation*}

In all the numerical experiments, the spectral Galerkin method is used for the spatial discretization. The spatial dimension is set equal to $N=2^k$, where $k\in\{8,9,10\}$. The expectations are estimated by the Monte Carlo by averaging over $200$ independent realizations. The final time is $T=1$.

Figures~\ref{F.1},~\ref{F.2} and~\ref{F.3} display in logarithmic scales how the error depends on the time-step size $\Delta t=T/L$, where $L=2^i$, with $i\in\{3,4,5,6,7,8,9\}$, for different values of the Hurst parameter $H\in \{0.3,0.5,0.7,0.9\}$ and of $N$. The objective is to illustrate the convergence with rate $H-1/4$ given by Theorem~\ref{theo:main}, and a second reference line with slope $H$ is also displayed. The reference solution is computed using the fully discrete scheme with time-step size $\Delta t_{\rm ref}=T/L_{\rm ref}$ with $L_{\rm ref}=2^{13}$.

First, for Figures~\ref{F.1} and~\ref{F.2}, we choose $f(X)+g(X)=-X^3+X$. One has $\varepsilon=1$ for Figure~\ref{F.1} and $\varepsilon=0.01$ for Figure~\ref{F.2}. We propose two numerical schemes: a semi-phase flow (SPF) method, with $f(X)=-X^3$ and $g(X)=X$, and a full-phase flow (FPF) method, with $f(X)=-X^3+X$ and $g(X)=0$. For all the values of the Hurst parameter $H$, we observe the convergence with order $H-1/4$, as predicted by Theorem~\ref{theo:main}, for both methods.

Second, for Figure~\ref{F.3}, we choose $f(X)=-X^3$ and $g(X)=X+\sin(X)+1$. As above, one observes the convergence with order $H-1/4$ as predicted by Theorem~\ref{theo:main}.

Note that when small values of $H$, when $\Delta t$ is small one seems to observe a higher order of convergence $H$, which is due to using a finite dimensional approximation.


\begin{figure}[!thb]
	\centering
	{\includegraphics[height=2.5in, width=2.5in]{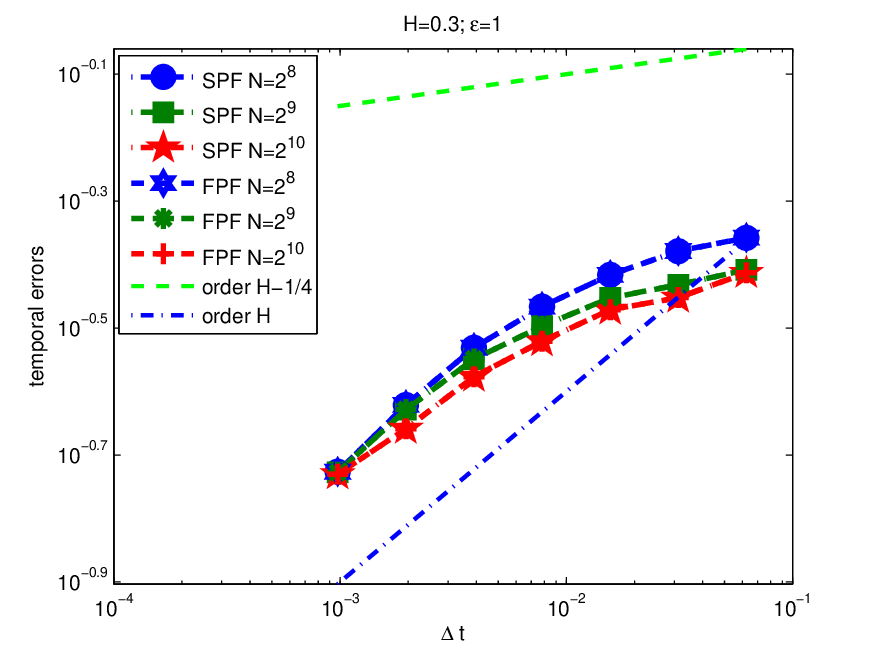}}
	{\includegraphics[height=2.5in, width=2.5in]{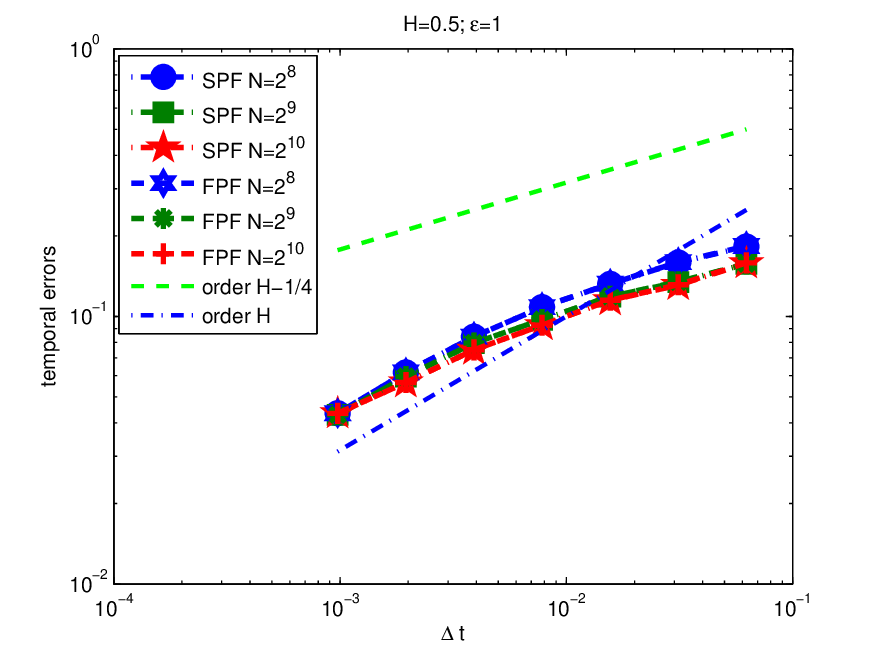}}\\
	{\includegraphics[height=2.5in, width=2.5in]{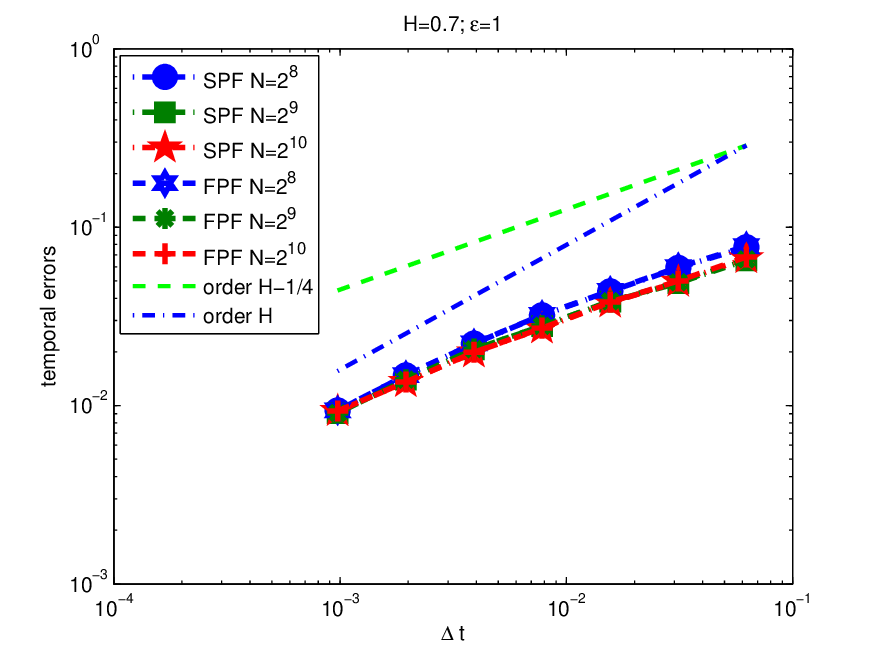}}
	{\includegraphics[height=2.5in, width=2.5in]{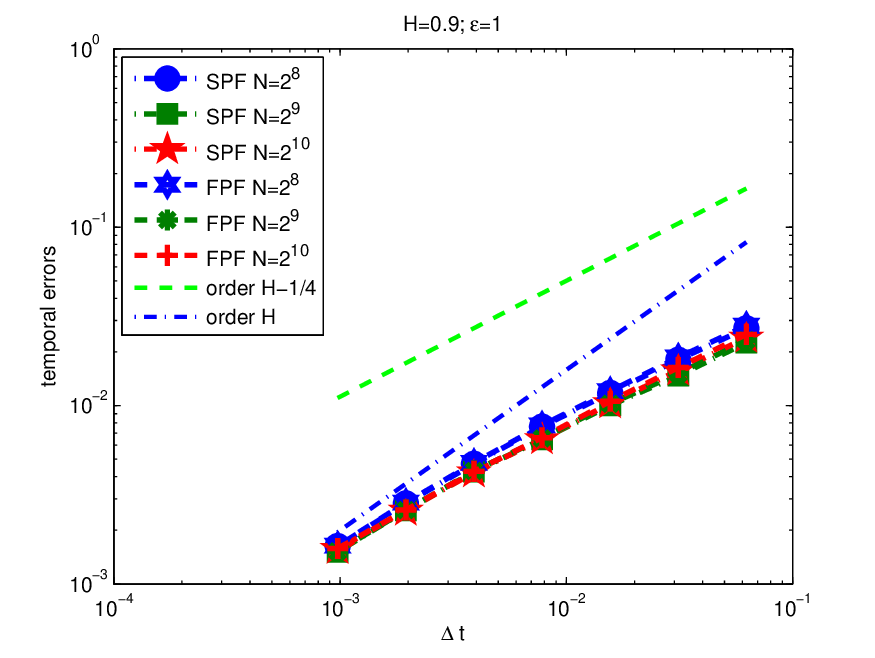}}
	\caption{Temporal convergence rate of numerical approximations with $f(t,X)=-X^3+X$ and $\varepsilon=1$ for different Hurst parameters $H\in \{0.3,0.5,0.7,0.9\}$.}
	\label{F.1}
\end{figure}

\begin{figure}[!thb]
	\centering
	{\includegraphics[height=2.5in, width=2.5in]{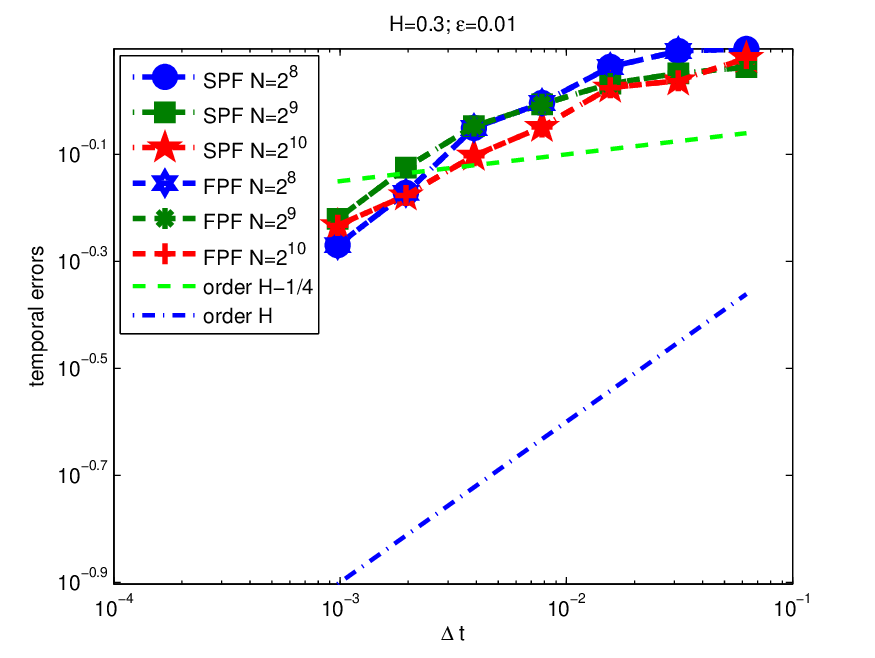}}
	{\includegraphics[height=2.5in, width=2.5in]{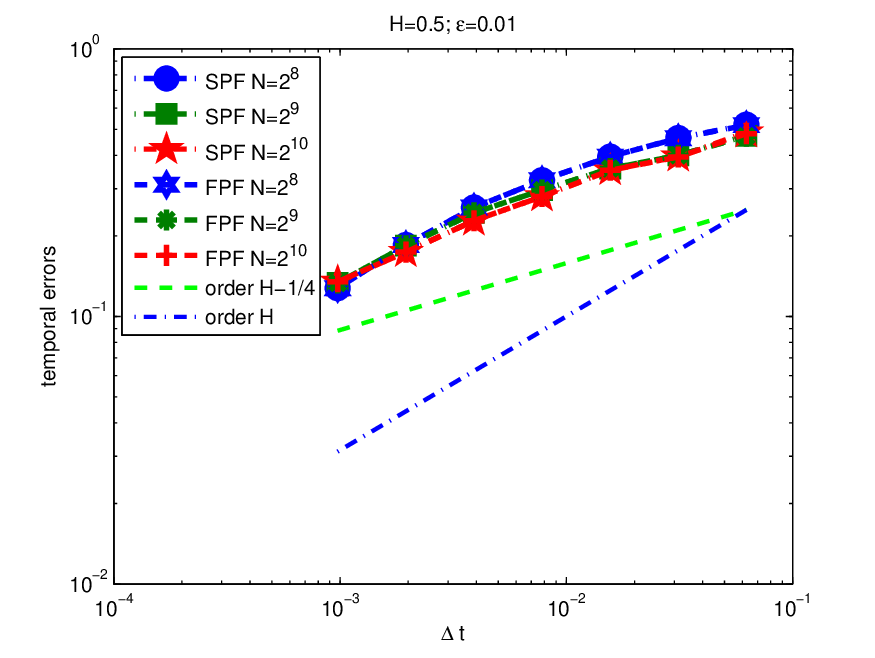}}\\
	{\includegraphics[height=2.5in, width=2.5in]{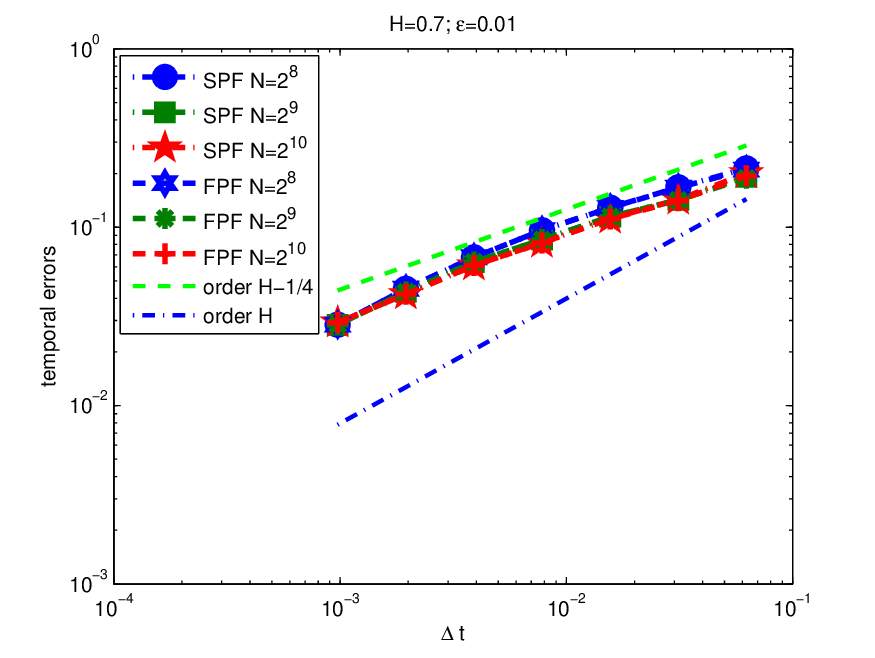}}
	{\includegraphics[height=2.5in, width=2.5in]{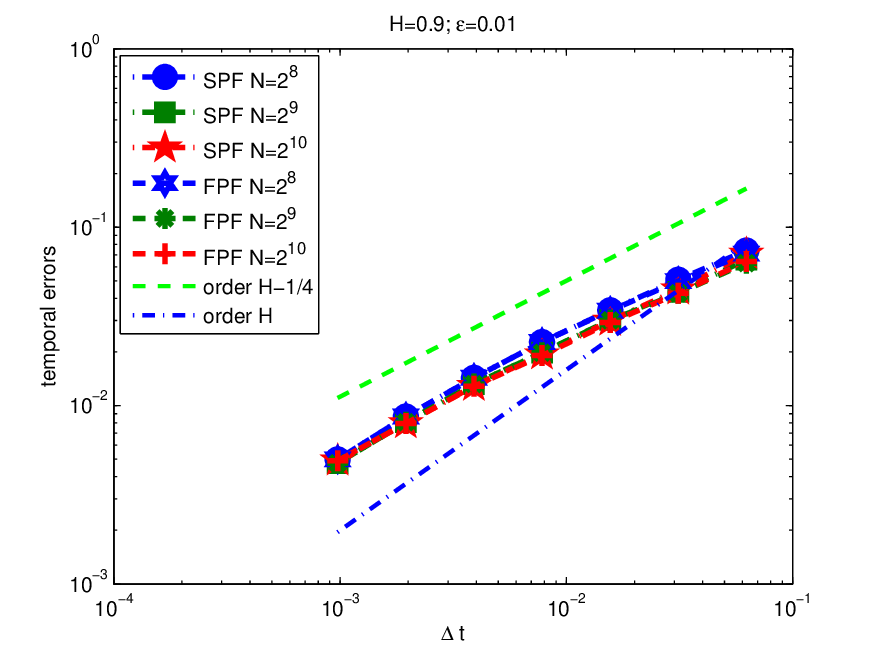}}
	\caption{Temporal convergence rate of numerical approximations with $f(t,X)=-X^3+X$ and $\varepsilon=0.01$ for different Hurst parameters $H\in \{0.3,0.5,0.7,0.9\}$.}
	\label{F.2}
\end{figure}

\begin{figure}[!thb]
	\centering
	{\includegraphics[height=2.5in, width=2.5in]{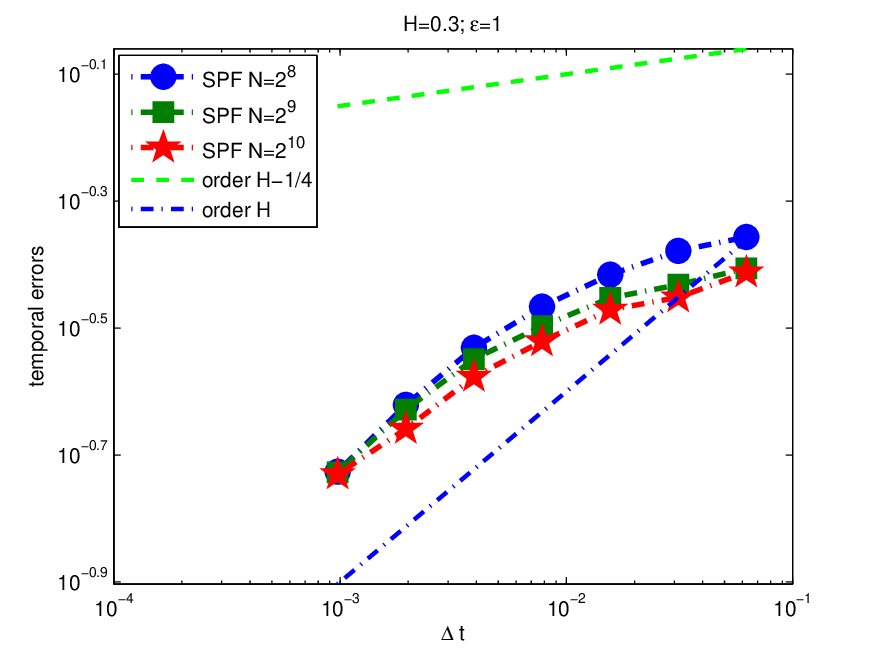}}
	{\includegraphics[height=2.5in, width=2.5in]{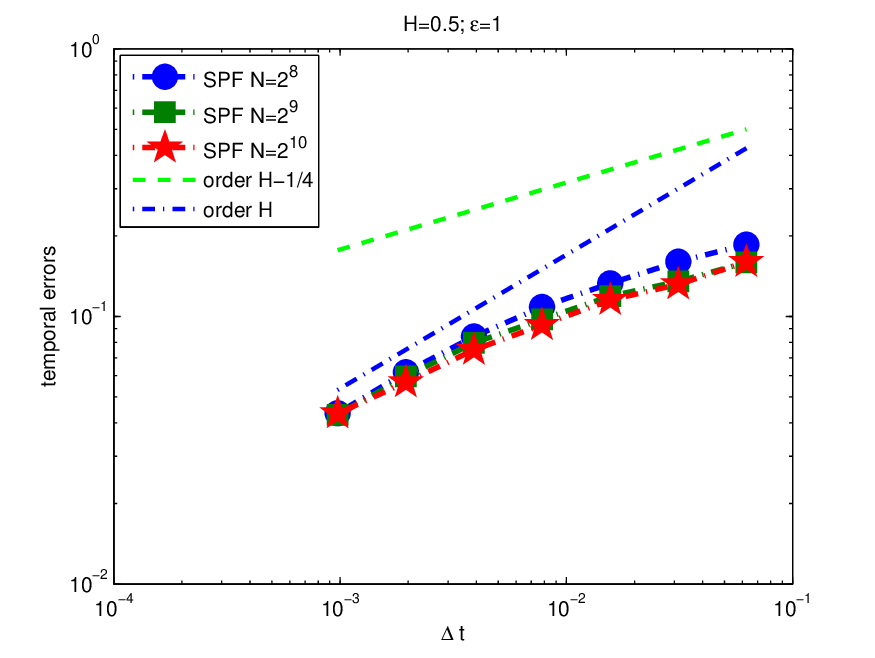}}\\
	{\includegraphics[height=2.5in, width=2.5in]{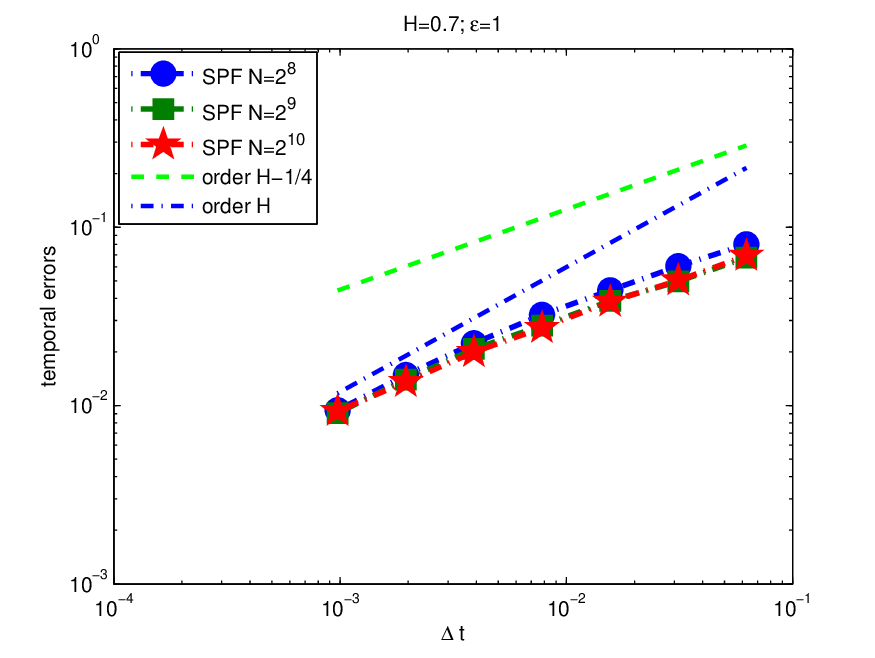}}
	{\includegraphics[height=2.5in, width=2.5in]{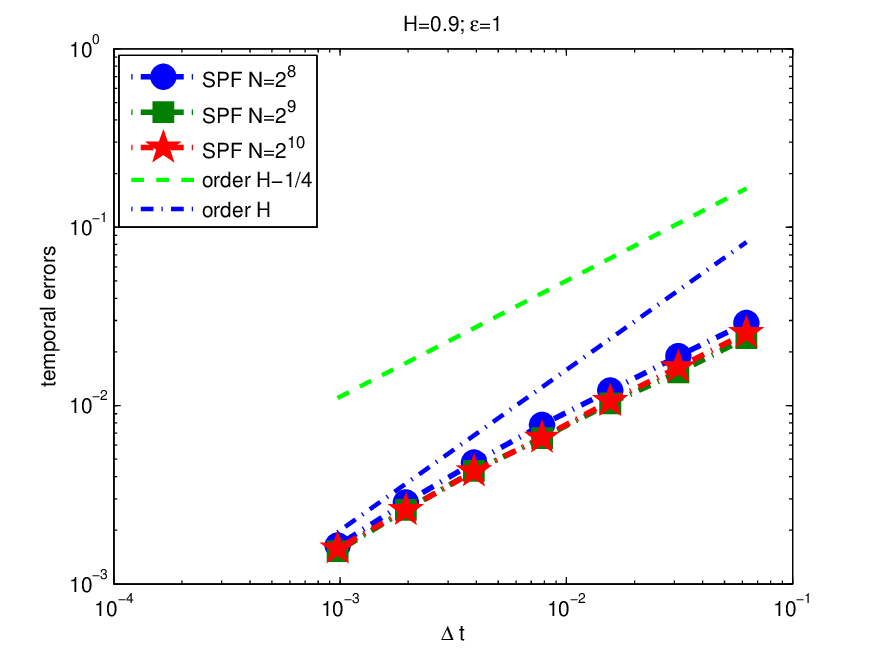}}
	\caption{Temporal convergence rate of numerical approximations with $f(t,X)=-X^3+X+\sin(X)+1$ and $\varepsilon=1$ for different Hurst parameters $H\in \{0.3,0.5,0.7,0.9\}$.}
	\label{F.3}
\end{figure}

%
%

\section{Conclusion}\label{S.6}

In this work, we have developed and analyzed a new class of splitting schemes based on a partial exact solution of nonlinear term to solve stochastic partial differential equations with non-Lipschitz coefficients driven by fractional noise when $1/4<H<1$. The proposed method is computationally tractable and effective. We have proved that the scheme converges with order $H-1/4$. This is illustrated by numerical experiments.

However, there are remaining open questions for future work. For example, one may study the weak order of convergence of the proposed numerical approximation. One may also investigate the construction of higher order methods, especially when $1/4<H<1/2$. Finally, in this manuscript we have only studied parabolic semilinear stochastic partial differential equations driven by a cylindrical fractional Brownian motion on the one-dimensional domain $(0,1)$. In future works, one may investigate the case of equations on multidimensional domains $(0,1)^d$, driven by other classes of noises.

\begin{appendix}

\section{Appendix: notation}\label{App.A}
Let $\bigl(\beta^H(t)\bigr)_{t\ge 0}$ be a real-valued fractional Brownian motion with Hurst index $H\in(0,1)$, given by
\[
\beta^H(t)=\int_{0}^{t}\mathcal{K}_H(t,s)\dd \beta(s),\quad \forall~t\ge 0,
\]
where $\bigl(\beta(t)\bigr)_{t\ge 0}$ is a standard real-valued Brownian motion.

For all $\lambda\in(0,\infty)$, let us consider the real-valued fractional Ornstein--Uhlenbeck process $\bigl(\Z^{H,\lambda}(t)\bigr)_{t\ge 0}$ defined by
\begin{equation}\label{eq:ZHlambda_def}
\Z^{H,\lambda}(t)=\int_{0}^{t}e^{-\lambda(t-s)}\dd \beta^H(s),\quad \forall~t\ge 0.
\end{equation}

For all $\lambda\in(0,\infty)$ and all $t\ge 0$, introduce the auxiliary function $\phi_{\lambda,t}$ defined by
\[
\phi_{\lambda,t}(s)=e^{-\lambda(t-s)}\mathds{1}_{s\in[0,t]}.
\]
Then for all $t\ge 0$ one has
\begin{equation}\label{eq:ZHlambda}
\Z^{H,\lambda}(t)=\int_{0}^{t}\phi_{\lambda,t}(s)\dd \beta^H(s).
\end{equation}
Moreover, for all $\lambda\in(0,\infty)$ and all $t_2\ge t_1\ge 0$, introduce the auxiliary function $\psi_{\lambda,t_1,t_2}$ defined by 
\[
\psi_{\lambda,t_1,t_2}(s)=e^{-\lambda(t_2-s)}\mathds{1}_{s\in[t_1,t_2]}.
\]
Then for all $t\ge 0$, one has
\begin{align}
\Z^{H,\lambda}(t_2)-\Z^{H,\lambda}(t_1)&=\bigl(e^{-\lambda(t_2-t_1)}-1\bigr)\Z^{H,\lambda}(t_1)+\int_{t_1}^{t_2}e^{-\lambda(t_2-s)}\dd\beta^H(s)\\
&=\bigl(e^{-\lambda(t_2-t_1)}-1\bigr)\Z^{H,\lambda}(t_1)+\int_{0}^{t_2}\psi_{\lambda,t_1,t_2}(s)\dd \beta^H(s).\label{eq:incrementsZHlambda}
\end{align}

Since the case $H=1/2$ is well-known, we assume that $H\neq 1/2$. The cases $H\in(1/2,1)$ and $H\in(0,1/2)$ are treated separately, since different techniques are employed in the proofs.

Given the time-step size $\Delta t\in(0,1)$, recall that $t_n=n\Delta t$ for all $n\in\N_0$. Let the increments of the real-valued fractional Brownian motion $\bigl(\beta^H(t)\bigr)_{t\ge 0}$ on the interval $[t_n,t_{n+1}]$  be denoted by $\Delta \beta_n^H=\beta^{H}(t_{n+1})-\beta^{H}(t_n)$, for all $n\ge 0$. Moreover, introduce the auxiliary mapping $\ell:[0,\infty)\to \N_0$, defined by $\ell(t)=\lfloor t/\Delta t\rfloor$, where $\lfloor\cdot\rfloor$ denotes the integer part, i.e. one has $t_{\ell(s)}=t_n$ for all $s\in[t_n,t_{n+1})$.

For all $\lambda\in(0,\infty)$, let us consider the discrete-time approximation $\bigl(\Z_{n}^{H,\lambda}\bigr)_{n\ge 0}$ of the real-valued fractional Ornstein--Uhlenbeck process defined by~\eqref{eq:ZHlambda_def}: set $\Z_0^{H,\lambda}=0$, and for all $n\ge 0$
\begin{equation}\label{eq:ZnHlambda_def}
\Z_{n+1}^{H,\lambda}=e^{-\lambda\Delta t}\bigl(\Z_n^{H,\lambda}+\Delta \beta_n^H\bigr).
\end{equation}
One obtains the following expression: for all $n\ge 0$ one has
\begin{equation}\label{eq:ZnHlambda}
\Z_n^{H,\lambda}=\sum_{j=0}^{n-1}\int_{t_j}^{t_{j+1}}e^{-\lambda(t_n-t_j)}\dd \beta^H(s)=\int_{0}^{t_n}e^{-\lambda(t_n-t_{\ell(s)})}\dd \beta^H(s).
\end{equation}
For all $\lambda\in(0,\infty)$, $\Delta t>0$ and $n\ge 0$, introduce the auxiliary function $\varepsilon_{\lambda,t_n}^{\Delta t}$ defined by
\[
\varepsilon_{\lambda,t_n}^{\Delta t}(s)=\Bigl(e^{-\lambda(t_n-s)}-e^{-\lambda(t_n-t_{\ell(s)})}\Bigr)\mathds{1}_{[0,t_n]}(s).
\]
Combining~\eqref{eq:ZHlambda_def} and~\eqref{eq:ZnHlambda}, the error $\Z^{H,\lambda}(t_n)-\Z_n^{H,\lambda}$ is expressed as
\begin{equation}\label{eq:errorZnHlambda}
\Z^{H,\lambda}(t_n)-\Z_n^{H,\lambda}=\int_{0}^{t_n}\varepsilon_{\lambda,t_n}^{\Delta t}(s)\dd \beta^H(s).
\end{equation}

\section{Appendix: properties of the one-dimensional fractional Ornstein--Uhlenbeck process when $H\in(1/2,1)$}\label{app:+}

\subsection{Moment bounds}

\begin{lemma}\label{lem:moment1DH+}
For all $H\in(1/2,1)$, there exists $C_H\in(0,\infty)$ such that for all $\lambda\in(0,\infty)$
\begin{equation}\label{eq:moment1DH+}
\underset{t\ge 0}\sup~\E[|\Z^{H,\lambda}(t)|^2]\le C_H\lambda^{-2H}.
\end{equation}
\end{lemma}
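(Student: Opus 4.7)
The plan is to use the isometry formula~\eqref{eq:Ito+} that is available for $H\in(1/2,1)$, which expresses the second moment of the fractional stochastic integral as an explicit double integral. Since $\Z^{H,\lambda}(t)$ is given by~\eqref{eq:ZHlambda} as the integral of the deterministic integrand $\phi_{\lambda,t}$ against the fractional Brownian motion, applying the isometry yields
\[
\E[|\Z^{H,\lambda}(t)|^2]=H(2H-1)\int_{0}^{t}\int_{0}^{t}e^{-\lambda(t-s_1)}e^{-\lambda(t-s_2)}|s_2-s_1|^{2H-2}\dd s_1\dd s_2.
\]

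The next step is a scaling change of variables: set $u=\lambda(t-s_1)$ and $v=\lambda(t-s_2)$, so $\dd s_i=\lambda^{-1}\dd u_i$ and $|s_2-s_1|^{2H-2}=\lambda^{2-2H}|u-v|^{2H-2}$. The two factors of $\lambda^{-1}$ from the Jacobian together with $\lambda^{2-2H}$ combine to give a prefactor $\lambda^{-2H}$, and the domain of integration becomes $[0,\lambda t]^2$. Enlarging the domain to $[0,\infty)^2$ for an upper bound gives
\[
\E[|\Z^{H,\lambda}(t)|^2]\le H(2H-1)\lambda^{-2H}\int_{0}^{\infty}\int_{0}^{\infty}e^{-u}e^{-v}|u-v|^{2H-2}\dd u\dd v.
\]

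The final step is to verify that the remaining double integral is finite, and this is the only point that needs care. Since $H\in(1/2,1)$, the exponent $2H-2\in(-1,0)$, so the singularity along the diagonal $u=v$ is integrable (integrating in $v$ against $|u-v|^{2H-2}$ produces an integrable function of $u$), and the exponential factors ensure integrability at infinity. Thus the constant
\[
C_H=H(2H-1)\int_{0}^{\infty}\int_{0}^{\infty}e^{-u}e^{-v}|u-v|^{2H-2}\dd u\dd v
\]
is finite and depends only on $H$, which establishes~\eqref{eq:moment1DH+} uniformly in $t\ge 0$ and $\lambda>0$. The main subtlety, though a mild one, is ensuring the correct tracking of the $\lambda$ exponent in the change of variables, since one factor of $\lambda^{-1}$ comes from each Jacobian while the singular kernel contributes $\lambda^{2-2H}$, and one must confirm these combine to produce precisely $\lambda^{-2H}$ rather than $\lambda^{-(2H-2)\pm\cdots}$.
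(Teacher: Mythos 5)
Your proof is correct and follows essentially the same route as the paper: apply the isometry formula~\eqref{eq:Ito+}, perform the scaling change of variables to extract the factor $\lambda^{-2H}$ (your bookkeeping $\lambda^{-2}\cdot\lambda^{2-2H}=\lambda^{-2H}$ is right), and bound the remaining integral by a finite constant depending only on $H$. The only cosmetic difference is that the paper keeps the domain $[0,\lambda t]^2$ and evaluates the bound via Fubini and symmetry as $2\int_0^{\lambda t}e^{-2r_1}\dd r_1\int_0^{\infty}e^{-r}r^{2H-2}\dd r\le\Gamma(2H-1)$, whereas you enlarge the domain to $[0,\infty)^2$ first and then argue finiteness of the diagonal singularity; these are the same estimate.
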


\begin{proof}[Proof of Lemma~\ref{lem:moment1DH+}]
Using the identity~\eqref{eq:ZHlambda} and applying the It\^o isometry formula~\eqref{eq:Ito} and its formulation~\eqref{eq:Ito+} when $H\in(1/2,1)$, one has
\begin{align*}
\E[|\Z^{H,\lambda}(t)|^2]&=\int_{0}^{t}|\mathcal{K}_{H,t}^\star\phi_{\lambda,t}(s)|^2 \dd s\\
&=H(2H-1)\int_{0}^{t}\int_{0}^{t}\phi_{\lambda,t}(s_1)\phi_{\lambda,t}(s_2)|s_2-s_1|^{2H-2}\dd s_1 \dd s_2\\
&=H(2H-1)\int_{0}^{t}\int_{0}^{t}e^{-\lambda(t-s_1)}e^{-\lambda(t-s_2)}|s_2-s_1|^{2H-2}\dd s_1 \dd s_2.
\end{align*}
The integral appearing in the right-hand side above can be written as
\begin{align*}
\int_{0}^{t}\int_{0}^{t}e^{-\lambda(t-s_1)}e^{-\lambda(t-s_2)}|s_2-s_1|^{2H-2}\dd s_1 \dd s_2&=\int_{0}^{t}\int_{0}^{t}e^{-\lambda s_1}e^{-\lambda s_2}|s_2-s_1|^{2H-2}\dd s_1 \dd s_2\\
&=\lambda^{-2H}\int_{0}^{\lambda t}\int_{0}^{\lambda t}e^{-r_1}e^{-r_2}|r_2-r_1|^{2H-2}\dd r_1 \dd r_2,
\end{align*}
using changes of variables $s_1=\lambda r_1$ and $s_2=\lambda r_2$.

Applying the Fubini theorem, one obtains
\begin{align*}
\int_{0}^{\lambda t}\int_{0}^{\lambda t}e^{-r_1}e^{-r_2}|r_2-r_1|^{2H-2}\dd r_1 \dd r_2&=2\int_{0}^{\lambda t}\int_{r_1}^{\lambda t}e^{-2r_1}e^{-(r_2-r_1)}(r_2-r_1)^{2H-2}\dd r_2 \dd r_1\\
&=2 \int_{0}^{\lambda t}e^{-2r_1} \int_{0}^{\lambda (t-r_1)} e^{-r}r^{2H-2} \dd r \dd r_1 \\
&\le \int_{0}^{+\infty}e^{-r}r^{2H-2}\dd r<\infty.
\end{align*}
Due to the condition $H>1/2$ the integral in the right-hand side above is finite. Therefore there exists $C_H\in(0,\infty)$ such that the inequality~\eqref{eq:moment1DH+} holds, and the proof of Lemma~\ref{lem:moment1DH+} is completed.
\end{proof}

\subsection{Temporal regularity}

\begin{lemma}\label{lem:temp1DH+}
For all $H\in(1/2,1)$ and all $\alpha\in[0,2H]$, there exists $C_{\alpha,H}\in(0,\infty)$ such that for all $\lambda\in(0,\infty)$, and all $t_1,t_2\ge 0$, one has
\begin{equation}\label{eq:temp1DH+}
\E[|\Z^{H,\lambda}(t_2)-\Z^{H,\lambda}(t_1)|^2]\le C_H\lambda^{\alpha-2H}|t_2-t_1|^\alpha.
\end{equation}
\end{lemma}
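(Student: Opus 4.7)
The plan is to estimate $\E[|\Z^{H,\lambda}(t_2)-\Z^{H,\lambda}(t_1)|^2]$ by applying the decomposition~\eqref{eq:incrementsZHlambda}, which writes the increment as a semigroup contraction of $\Z^{H,\lambda}(t_1)$ plus a fresh fractional stochastic integral over $[t_1,t_2]$. Using the $L^2$ triangle inequality, it suffices to estimate each of the two contributions separately and show that both satisfy the claimed bound $C_{\alpha,H}\lambda^{\alpha-2H}|t_2-t_1|^\alpha$ for all $\alpha\in[0,2H]$. Note that since $H\in(1/2,1)$, the range $[0,2H]$ is contained in $[0,2]$, which will be useful for the first term.

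For the first term, I would use the elementary inequality $(1-e^{-x})^2\le x^{\alpha}$, valid for $x\ge 0$ and $\alpha\in[0,2]$ (which follows from $(1-e^{-x})^2\le \min(x^2,1)\le x^\alpha$ by separating the cases $x\le 1$ and $x>1$). Combined with the moment bound~\eqref{eq:moment1DH+} from Lemma~\ref{lem:moment1DH+}, this yields
\[
\bigl(e^{-\lambda(t_2-t_1)}-1\bigr)^2\E[|\Z^{H,\lambda}(t_1)|^2]\le \bigl(\lambda(t_2-t_1)\bigr)^\alpha\cdot C_H\lambda^{-2H}=C_H\lambda^{\alpha-2H}(t_2-t_1)^\alpha,
\]
which is the desired bound.

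For the second term, I would apply the It\^o isometry~\eqref{eq:Ito} together with the formula~\eqref{eq:Ito+} available for $H\in(1/2,1)$ to the integrand $\psi_{\lambda,t_1,t_2}$, which is supported on $[t_1,t_2]$. This gives
\[
\E\bigl[\big|\int_0^{t_2}\psi_{\lambda,t_1,t_2}(s)\dd\beta^H(s)\big|^2\bigr]=H(2H-1)\int_{t_1}^{t_2}\int_{t_1}^{t_2}e^{-\lambda(t_2-s_1)}e^{-\lambda(t_2-s_2)}|s_2-s_1|^{2H-2}\dd s_1\dd s_2.
\]
Performing the change of variables $u_i=\lambda(t_2-s_i)$, exactly as in the proof of Lemma~\ref{lem:moment1DH+}, converts this into $\lambda^{-2H}J\bigl(\lambda(t_2-t_1)\bigr)$, where
\[
J(R)=H(2H-1)\int_0^R\int_0^R e^{-u_1}e^{-u_2}|u_2-u_1|^{2H-2}\dd u_1\dd u_2.
\]

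The key step is the interpolation bound on $J(R)$. On the one hand, bounding the exponentials by $1$ and integrating the singular kernel as in Lemma~\ref{lem:moment1DH+} gives $J(R)\le C_H R^{2H}$. On the other hand, extending the integration to $[0,\infty)^2$ and using the finiteness of $\int_0^\infty e^{-r}r^{2H-2}\dd r$ (valid since $H>1/2$, exactly as in the proof of Lemma~\ref{lem:moment1DH+}) gives $J(R)\le J(\infty)<\infty$. Combining these two bounds yields $J(R)\le C_H\min(R^{2H},1)\le C_H R^\alpha$ for any $\alpha\in[0,2H]$, where the last inequality again follows by separating $R\le 1$ and $R>1$. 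Hence the second term is bounded by $C_H\lambda^{-2H}\bigl(\lambda(t_2-t_1)\bigr)^\alpha=C_H\lambda^{\alpha-2H}(t_2-t_1)^\alpha$, and combining with the first term's bound completes the proof.

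The main obstacle, or rather the main insight, is the interpolation step: recognizing that the double exponential integral naturally carries both the "short-time" scale $R^{2H}$ and the "long-time" bound $J(\infty)<\infty$, and that this dichotomy translates, via the substitution $R=\lambda(t_2-t_1)$, into exactly the interpolation $\lambda^{\alpha-2H}(t_2-t_1)^\alpha$ for $\alpha\in[0,2H]$ claimed in the lemma.
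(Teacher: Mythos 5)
Your proposal is correct and follows essentially the same route as the paper's proof: the decomposition~\eqref{eq:incrementsZHlambda}, the elementary bound $|1-e^{-z}|\le z^\gamma$ together with Lemma~\ref{lem:moment1DH+} for the first term, and for the second term the isometry~\eqref{eq:Ito+}, the scaling change of variables, and the dichotomy between $\lambda(t_2-t_1)\le 1$ and $\lambda(t_2-t_1)>1$ followed by interpolation. Your packaging of that dichotomy as the bound $J(R)\le C_H\min(R^{2H},1)\le C_H R^\alpha$ is just a cleaner phrasing of the paper's two-case argument, not a different method.
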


\begin{proof}[Proof of Lemma~\ref{lem:temp1DH+}]
Recall the decomposition~\eqref{eq:incrementsZHlambda} of $\Z^{H,\lambda}(t_2)-\Z^{H,\lambda}(t_1)$.

For the first term in~\eqref{eq:incrementsZHlambda}, using the inequality
\[
|1-e^{-z}|\le z^{\gamma},\qquad \forall~z\ge 0,\quad \forall~\gamma\in[0,1],
\]
and applying the moment bounds~\eqref{eq:moment1DH+} from Lemma~\ref{lem:moment1DH+}, one has
\begin{equation}\label{eq:aux1tempH+}
\E[\big|\bigl(e^{-\lambda(t_2-t_1)}-1\bigr)\Z^{H,\lambda}(t_1)\big|^2]\le C_H\lambda^{\alpha-2H}(t_2-t_1)^{\alpha}.
\end{equation}
For the second term in~\eqref{eq:incrementsZHlambda}, applying the It\^o isometry formula~\eqref{eq:Ito} and its formulation~\eqref{eq:Ito+} when $H\in(1/2,1)$, one has
\begin{align*}
\E[\big|\int_{t_1}^{t_2}e^{-\lambda(t_2-s)}\dd\beta^H(s)\big|^2]&=\E[\big|\int_{0}^{t_2}\psi_{\lambda,t_1,t_2}(s)\dd\beta^H(s)\big|^2]\\
&=\int_{0}^{t_2}|\mathcal{K}_{H,t_2}^\star\psi_{\lambda,t_1,t_2}(s)|^2\dd s\\
&=H(2H-1)\int_{0}^{t_2}\int_{0}^{t_2}\psi_{\lambda,t_1,t_2}(s_1)\psi_{\lambda,t_1,t_2}(s_2)|s_2-s_1|^{2H-2}\dd s_1 \dd s_2\\
&=H(2H-1)\int_{t_1}^{t_2}\int_{t_1}^{t_2}e^{-\lambda(t_2-s_1)}e^{-\lambda(t_2-s_2)}|s_2-s_1|^{2H-2}\dd s_1 \dd s_2\\
&=H(2H-1)\int_{0}^{t_2-t_1}\int_{0}^{t_2-t_1}e^{-\lambda s_1}e^{-\lambda s_2}|s_2-s_1|^{2H-2}\dd s_1 \dd s_2.
\end{align*}
Using changes of variables $r_1=\lambda s_1$ and $r_2=\lambda s_2$, one obtains
\begin{align*}
\int_{0}^{t_2-t_1}\int_{0}^{t_2-t_1}&e^{-\lambda s_1}e^{-\lambda s_2}|s_2-s_1|^{2H-2}\dd s_1 \dd s_2\\
&=\lambda^{-2H}\int_{0}^{\lambda(t_2-t_1)}\int_{0}^{\lambda(t_2-t_1)}e^{-r_1}e^{- r_2}|r_2-r_1|^{2H-2}\dd r_1 \dd r_2\\
&=2\lambda^{-2H}\int_{0}^{\lambda(t_2-t_1)}\int_{r_1}^{\lambda(t_2-t_1)}e^{-2r_1}e^{-(r_2-r_1)}|r_2-r_1|^{2H-2}\dd r_2 \dd r_1\\
&=2\lambda^{-2H}\int_{0}^{\lambda(t_2-t_1)}e^{-2r_1}\dd r_1\int_{0}^{\lambda(t_2-t_1)}e^{-r}r^{2H-2}\dd r.
\end{align*}
To proceed, one needs to treat separately the cases $\lambda(t_2-t_1)\le 1$ and $\lambda(t_2-t_1)>1$.

On the one hand, when $\lambda(t_2-t_1)\le 1$, one has
\begin{align*}
\int_{0}^{\lambda(t_2-t_1)}e^{-2r_1}\dd r_1\int_{0}^{\lambda(t_2-t_1)}e^{-r}r^{2H-2}\dd r&\le \int_{0}^{\lambda(t_2-t_1)}\dd r_1\int_{0}^{\lambda(t_2-t_1)}r^{2H-2}\dd r\\
&\le(\lambda(t_2-t_1)\bigr) \frac{\bigl(\lambda(t_2-t_1)\bigr)^{2H-1}}{2H-1}=\frac{\bigl(\lambda(t_2-t_1)\bigr)^{2H}}{2H-1}.
\end{align*}
On the other hand, when $\lambda(t_2-t_1)>1$, one has
\[
\int_{0}^{\lambda(t_2-t_1)}e^{-2r_1}\dd r_1\int_{0}^{\lambda(t_2-t_1)}e^{-r}r^{2H-2}\dd r\le \int_{0}^{\infty}e^{-2r_1}\dd r_1\int_{0}^{\infty}e^{-r}r^{2H-2}\dd r<\infty.
\]
Combining the two cases, there exist $C_{0,H},C_{1,H}\in(0,\infty)$ such that one has the two inequalities
\begin{align*}
\int_{0}^{\lambda(t_2-t_1)}e^{-2r_1}\dd r_1\int_{0}^{\lambda(t_2-t_1)}e^{-r}r^{2H-2}\dd r&\le C_{0,H}\\
&\le C_{1,H}\bigl(\lambda(t_2-t_1)\bigr)^{2H}.
\end{align*}
By an interpolation interpolation, for all $\alpha\in[0,2H]$, there exists $C_{\alpha,H}\in(0,\infty)$ such that one has
\begin{equation}\label{eq:aux2tempH+}
\int_{0}^{\lambda(t_2-t_1)}e^{-2r_1}\dd r_1\int_{0}^{\lambda(t_2-t_1)}e^{-r}r^{2H-2}\dd r\le C_{\alpha,H}\bigl(\lambda(t_2-t_1)\bigr)^{\alpha}.
\end{equation}
Gathering the estimates~\eqref{eq:aux1tempH+} and~\eqref{eq:aux2tempH+}, and recalling the decomposition~\eqref{eq:incrementsZHlambda}, one obtains~\eqref{eq:temp1DH+} and the proof of Lemma~\ref{lem:temp1DH+} is completed.
\end{proof}

\section{Appendix: properties of the one-dimensional fractional Ornstein--Uhlenbeck process when $H\in(0,1/2)$}\label{app:-}

\subsection{Moment bounds}

\begin{lemma}\label{lem:moment1DH-}
For all $H\in(0,\frac12)$, there exists $C_H\in(0,\infty)$ such that for all $\lambda\in(0,\infty)$
\begin{equation}\label{eq:moment1DH-}
\underset{t\ge 0}\sup~\E[|\Z^{H,\lambda}(t)|^2]\le C_H\lambda^{-2H}.
\end{equation}
\end{lemma}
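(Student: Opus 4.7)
The plan is to exploit the self-similarity of the fractional Brownian motion to factor out $\lambda^{-2H}$, and then to establish a uniform-in-time bound for the unit-parameter process via an explicit covariance computation.

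First, using the scaling property $\beta^H(c \cdot) \stackrel{d}{=} c^H \beta^H(\cdot)$, the change of variable $s = u/\lambda$ in the definition~\eqref{eq:ZHlambda_def} yields the distributional identity
\[
\Z^{H,\lambda}(t) \stackrel{d}{=} \lambda^{-H}\, \Z^{H,1}(\lambda t),
\]
and hence $\E[|\Z^{H,\lambda}(t)|^2] = \lambda^{-2H}\, \E[|\Z^{H,1}(\lambda t)|^2]$. This reduces the claim to proving the uniform bound $\sup_{\tau \ge 0} \E[|\Z^{H,1}(\tau)|^2] < \infty$. A more direct but technically heavier alternative would be to work from the isometry~\eqref{eq:Ito} together with the formula~\eqref{eq:KH-} for $\mathcal{K}_{H,t}^\star$, performing the joint change of variables $u = \lambda r$, $v = \lambda s$ on both pieces of $\mathcal{K}_{H,t}^\star \phi_{\lambda,t}$; the scaling argument sidesteps this.

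To prove the uniform bound at $\lambda = 1$, I would exploit that the integrand $s \mapsto e^{-(\tau-s)}$ is of class $\mathcal{C}^1$ on $[0,\tau]$, so the Wiener integral definition recalled in the excerpt gives
\[
\Z^{H,1}(\tau) = \beta^H(\tau) - \int_0^\tau e^{-(\tau-s)} \beta^H(s)\, ds.
\]
Squaring, taking expectations, using the covariance $\E[\beta^H(r)\beta^H(s)] = \tfrac12(r^{2H} + s^{2H} - |r-s|^{2H})$, and then substituting $u_i = \tau - s_i$ in the resulting single and double integrals, the $\tau^{2H}$ contributions arising from the three terms cancel and one is left with four explicit pieces: a term $\tau^{2H} e^{-\tau}$; a convergent integral $\int_0^\tau e^{-u} u^{2H} du$; an integral $e^{-\tau}\int_0^\tau e^{-u}(\tau-u)^{2H} du$ which after a further substitution $v = \tau - u$ is dominated by $e^{-\tau} \tau^{2H+1}/(2H+1)$; and a double integral $\int_0^\tau\!\int_0^\tau e^{-u_1-u_2}|u_1-u_2|^{2H} du_1 du_2$ that increases to a finite limit as $\tau \to \infty$. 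Each is uniformly bounded in $\tau$.

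The main obstacle is tracking the cancellation of the individually $\tau$-growing $\tau^{2H}$ contributions when collecting the three terms in $\E[|\Z^{H,1}(\tau)|^2]$: the algebra is elementary but easy to mishandle. Once the decomposition into the four bounded pieces above is obtained, the remaining bounds are routine and give~\eqref{eq:moment1DH-}.
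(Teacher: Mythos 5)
Your proof is correct, but it takes a genuinely different route from the paper's. The paper works directly at arbitrary $\lambda$: it applies the isometry~\eqref{eq:Ito} together with the representation~\eqref{eq:KH-} of $\mathcal{K}_{H,t}^\star$ valid for $H\in(0,\tfrac12)$, splits the result into the term~\eqref{eq:auxmoments0} and the two singular integrals $\mathcal{I}_{H,\lambda,t}^1$, $\mathcal{I}_{H,\lambda,t}^2$, proves the auxiliary inequality~\eqref{eq:claim}, and extracts the factor $\lambda^{-2H}$ by a change of variables inside each term (with Cauchy--Schwarz and Fubini for $\mathcal{I}_{H,\lambda,t}^2$). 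You instead scale $\lambda$ out once and for all via the self-similarity of fractional Brownian motion (immediate from the covariance function stated in the paper), reducing the lemma to the uniform bound $\sup_{\tau\ge 0}\E[|\Z^{H,1}(\tau)|^2]<\infty$, which you then verify using only the paper's integration-by-parts definition of the Wiener integral for $\mathcal{C}^1$ integrands and the explicit fBm covariance. Your bookkeeping is right: the pure $\tau^{2H}$ terms reduce to $\tau^{2H}e^{-\tau}$, the two integrals of $e^{-u}(\tau-u)^{2H}$ combine into $-e^{-\tau}\int_0^\tau e^{-u}(\tau-u)^{2H}\dd u$, and each of the four remaining pieces is bounded uniformly in $\tau$, exactly as you list. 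Your argument is more elementary --- it never touches the kernel $\mathcal{K}_{H,t}^\star$ or any singular integral --- and it is uniform in $H\in(0,1)$, so it proves Lemma~\ref{lem:moment1DH+} at the same time. What the paper's heavier route buys is reusability: the intermediate bounds~\eqref{eq:auxmoments1} and~\eqref{eq:auxmoments2} and the inequality~\eqref{eq:claim} are invoked again in the proofs of Lemma~\ref{lem:temp1DH-} and Lemma~\ref{lem:error1DH-}, where the integrands $\psi_{\lambda,t_1,t_2}$ and $\varepsilon_{\lambda,t_n}^{\Delta t}$ have jump discontinuities, so the $\mathcal{C}^1$ integration-by-parts shortcut is not directly available and the kernel machinery cannot be avoided; your scaling trick alone would not deliver those later estimates, which mix the two scales $\lambda(t_2-t_1)$ or $\lambda\Delta t$ with $\lambda t$.
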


\begin{proof}[Proof of Lemma~\ref{lem:moment1DH-}]
Using the identity~\eqref{eq:ZHlambda}, applying the It\^o isometry formula~\eqref{eq:Ito}, and using the expression~\eqref{eq:KH-} for $\mathcal{K}_{H,T}^\star\phi (t)$ when $H\in(0,\frac12)$, one has
\begin{align*}
\E[|\Z^{H,\lambda}(t)|^2]&=\int_{0}^{t}|\mathcal{K}_{H,t}^\star\phi_{\lambda,t}(s)|^2 \dd s\\
&\le C\int_{0}^{t}\frac{|\phi_{\lambda,t}(s)|^2}{(t-s)^{1-2H}}\dd s+C\int_{0}^{t}\Big|s^{\frac12-H}\int_{s}^{t}\frac{s^{H-\frac12}\phi_{\lambda,t}(s)-\tau^{H-\frac12}\phi_{\lambda,t}(\tau)}{(\tau-s)^{\frac32-H}}\dd \tau\Big|^2 \dd s.
\end{align*}
For the first term, for all $t\ge 0$ one has
\begin{align*}
\int_{0}^{t}\frac{|\phi_{\lambda,t}(s)|^2}{(t-s)^{1-2H}}\dd s&=\int_{0}^{t}\frac{e^{-2\lambda(t-s)}}{(t-s)^{1-2H}}\dd s=\int_{0}^{t}\frac{e^{-2\lambda s}}{s^{1-2H}}\dd s\\
&=\lambda^{-2H}\int_{0}^{\lambda t}\frac{e^{-2r}}{r^{1-2H}}\dd r\le \lambda^{-2H}\int_{0}^{\infty}\frac{e^{-2r}}{r^{1-2H}}\dd r,
\end{align*}
with a change of variable $r=\lambda s$. Owing to the condition $H>0$ one obtains the following result: there exists $C_H\in(0,\infty)$ such that for all $\lambda\in(0,\infty)$ and $t\ge 0$ one has
\begin{equation}\label{eq:auxmoments0}
\int_{0}^{t}\frac{|\phi_{\lambda,t}(s)|^2}{(t-s)^{1-2H}}\dd s\le C_H\lambda^{-2H}.
\end{equation}

For the second term, note that one has the decomposition
\[
s^{H-\frac12}\phi_{\lambda,t}(s)-\tau^{H-\frac12}\phi_{\lambda,t}(\tau)=\bigl(s^{H-\frac12}-\tau^{H-\frac12}\bigr)\phi_{\lambda,t}(s)+\tau^{H-\frac12}\bigl(\phi_{\lambda,t}(s)-\phi_{\lambda,t}(\tau)\bigr),
\]
therefore one obtains the upper bound
\[
\int_{0}^{t}\Big|s^{\frac12-H}\int_{s}^{t}\frac{s^{H-\frac12}\phi_{\lambda,t}(s)-\tau^{H-\frac12}\phi_{\lambda,t}(\tau)}{(\tau-s)^{\frac32-H}}\dd \tau\Big|^2 \dd s\le 2\bigl(\mathcal{I}_{H,\lambda,t}^1+\mathcal{I}_{H,\lambda,t}^2\bigr),
\]
where $\mathcal{I}_{H,\lambda,t}^1$ and $\mathcal{I}_{H,\lambda,t}^2$ are defined as
\begin{align*}
\mathcal{I}_{H,\lambda,t}^1&=\int_{0}^{t}\Big|s^{\frac12-H}\phi_{\lambda,t}(s)\int_{s}^{t}\frac{s^{H-\frac12}-\tau^{H-\frac12}}{(\tau-s)^{\frac32-H}}\dd \tau\Big|^2 \dd s\\
&=\int_{0}^{t}\Big|s^{\frac12-H}e^{-\lambda(t-s)}\int_{s}^{t}\frac{s^{H-\frac12}-\tau^{H-\frac12}}{(\tau-s)^{\frac32-H}}\dd \tau\Big|^2 \dd s\\
\mathcal{I}_{H,\lambda,t}^2&=\int_{0}^{t}\Big|s^{\frac12-H}\int_{s}^{t}\frac{\tau^{H-\frac12}\bigl(\phi_{\lambda,t}(s)-\phi_{\lambda,t}(\tau)\bigr)}{(\tau-s)^{\frac32-H}}\dd \tau\Big|^2 \dd s\\
&=\int_{0}^{t}\Big|s^{\frac12-H}\int_{s}^{t}\frac{\tau^{H-\frac12}\bigl(e^{-\lambda(t-s)}-e^{-\lambda(t-\tau)}\bigr)}{(\tau-s)^{\frac32-H}}\dd \tau\Big|^2 \dd s.
\end{align*}

To deal with the term $\mathcal{I}_{H,\lambda,t}^1$, we prove the following auxiliary inequality: there exists $C_H\in(0,\infty)$ such that one has
\begin{equation}\label{eq:claim}
\int_{s}^{t}\frac{s^{H-\frac12}-\tau^{H-\frac12}}{(\tau-s)^{\frac32-H}}\dd \tau\le C_Hs^{2H-1},\quad \forall~t\ge s\ge 0.
\end{equation}
The inequality~\eqref{eq:claim} is proved as follows: using a change of variable $\tau=s(1+r)$, one has
\[
\int_{s}^{t}\frac{s^{H-\frac12}-\tau^{H-\frac12}}{(\tau-s)^{\frac32-H}}\dd \tau=\int_{0}^{\frac{t-s}{s}}s^{H-\frac12}\frac{1-(1+r)^{H-\frac12}}{s^{\frac32-H}r^{\frac32-H}} s\dd r\le s^{2H-1}\int_{0}^{\infty}\frac{1-(1+r)^{H-\frac12}}{r^{\frac32-H}} \dd r.
\]
Owing to the condition $H\in(0,\frac12)$ one has
\begin{align*}
\int_{0}^{\infty}\frac{1-(1+r)^{H-\frac12}}{r^{\frac32-H}} \dd r&\le \int_{0}^{1}\frac{1-(1+r)^{H-\frac12}}{r^{\frac32-H}} \dd r+\int_{1}^{\infty}\frac{1}{r^{\frac32-H}} \dd r\\
&\le(\frac12-H)\int_{0}^{1}\frac{1}{r^{\frac12-H}} \dd r+\int_{1}^{\infty}\frac{1}{r^{\frac32-H}} \dd r<\infty,
\end{align*}
and the proof of the inequality~\eqref{eq:claim} is completed.

Applying the inequality~\eqref{eq:claim}, for the term $\mathcal{I}_{H,\lambda,t}^1$ one then obtains the upper bound
\begin{align*}
\int_{0}^{t}\Big|s^{\frac12-H}e^{-\lambda(t-s)}\int_{s}^{t}\frac{s^{H-\frac12}-\tau^{H-\frac12}}{(\tau-s)^{\frac32-H}}\dd \tau\Big|^2 \dd s&\le C_H\int_{0}^{t}s^{2H-1}e^{-2\lambda(t-s)}\dd s.
\end{align*}
Moreover, by a change of variable $\lambda s=r$, one has
\[
\int_{0}^{t}s^{2H-1}e^{-2\lambda(t-s)}\dd s=\lambda^{-2H}\int_{0}^{\lambda t}r^{2H-1}e^{-2(\lambda t-r)}\dd r.
\]
Observe that the integral in the right-hand side above satisfies for all $t\ge 0$
\begin{align*}
\int_{0}^{\lambda t}r^{2H-1}e^{-2(\lambda t-r)}\dd r&\le \int_{0}^{1}r^{2H-1}\dd r+\mathds{1}_{\lambda t\ge 1}\int_{1}^{\lambda t}e^{-2(\lambda t-r)}\dd r\\
&\le \int_{0}^{1}r^{2H-1}\dd r+\int_{0}^{\infty}e^{-2r'}\dd r'<\infty.
\end{align*}
As a result, there exists $C_H\in(0,\infty)$ such that for all $\lambda\in(0,\infty)$ and $t\ge 0$ one has
\begin{equation}\label{eq:auxmoments1}
\mathcal{I}_{H,\lambda,t}^{1}=\int_{0}^{t}\Big|s^{\frac12-H}e^{-\lambda(t-s)}\int_{s}^{t}\frac{s^{H-\frac12}-\tau^{H-\frac12}}{(\tau-s)^{\frac32-H}}\dd \tau\Big|^2 \dd s\le C_H\lambda^{-2H}.
\end{equation}
It remains to deal with the term $\mathcal{I}_{H,\lambda,t}^2$. First, one has the upper bound
\[
\mathcal{I}_{H,\lambda,t}^2=\int_{0}^{t}\Big|s^{\frac12-H}\int_{s}^{t}\frac{\tau^{H-\frac12}\bigl(e^{-\lambda(t-s)}-e^{-\lambda(t-\tau)}\bigr)}{(\tau-s)^{\frac32-H}}\dd \tau\Big|^2 \dd s\le \int_{0}^{t}\Big|\int_{s}^{t}\frac{\bigl(e^{-\lambda(t-\tau)}-e^{-\lambda(t-s)}\bigr)}{(\tau-s)^{\frac32-H}}\dd \tau\Big|^2 \dd s.
\]
By changes of variables $u=t-s$ and $v=t-\tau$, one has
\begin{align*}
\int_{0}^{t}\Big|\int_{s}^{t}\frac{\bigl(e^{-\lambda(t-\tau)}-e^{-\lambda(t-s)}\bigr)}{(\tau-s)^{\frac32-H}}\dd \tau\Big|^2 \dd s&=\int_{0}^{t}\Big|\int_{0}^{u}\frac{e^{-\lambda v}-e^{-\lambda u}}{(u-v)^{\frac32-H}}\dd v\Big|^2\dd u\\
&=\int_{0}^{t}\Big|\int_{0}^{u}e^{-\lambda (u-w)}\frac{1-e^{-\lambda w}}{w^{\frac32-H}}\dd w\Big|^2\dd u,
\end{align*}
where the last identity is obtained by the change of variable $w=v-u$ and writing
\[
e^{-\lambda v}-e^{-\lambda u}=e^{-\lambda (u-w)}-e^{-\lambda u}=e^{-\lambda(u-w)}(1-e^{-\lambda w}).
\]

Applying the Cauchy--Schwarz inequality and the Fubini theorem, one has
\begin{align*}
\int_{0}^{t}\Big|\int_{0}^{u}e^{-\lambda (u-w)}\frac{1-e^{-\lambda w}}{w^{\frac32-H}}\dd w\Big|^2\dd u&\le \int_{0}^{t}\Bigl(\int_{0}^{u}e^{-\lambda (u-w)}\dd w\Bigr)\Bigl(\int_{0}^{u}e^{-\lambda (u-w)}\frac{(1-e^{-\lambda w})^2}{w^{3-2H}}\dd w\Bigr)\dd u\\
&\le \lambda^{-1}\int_{0}^{t}\int_{w}^{t}e^{-\lambda(u-w)}\dd u\frac{(1-e^{-\lambda w})^2}{w^{3-2H}}\dd w\\
&\le \lambda^{-2}\int_{0}^{t}\frac{(1-e^{-\lambda w})^2}{w^{3-2H}}\dd w.
\end{align*}
Finally, using a change of variables $r=\lambda w$, one has
\[
\int_{0}^{t}\frac{(1-e^{-\lambda w})^2}{w^{3-2H}}\dd w=\lambda^{2-2H}\int_{0}^{\lambda t}\frac{(1-e^{-r})^2}{r^{3-2H}}\dd r\le\lambda^{2-2H}\int_{0}^{\infty}\frac{(1-e^{-r})^2}{r^{3-2H}}\dd r,
\]
where using the condition $H\in(0,1/2)$ one has
\[
\int_{0}^{\infty}\frac{(1-e^{-r})^2}{r^{3-2H}}\dd r\le \int_{0}^{1}r^{2H-1}\dd r+\int_{0}^{\infty}\frac{1}{r^{3-2H}}\dd r<\infty.
\]
As a result, there exists $C_H\in(0,\infty)$ such that for all $\lambda\in(0,\infty)$ and $t\ge 0$ one has
\begin{equation}\label{eq:auxmoments2}
\mathcal{I}_{H,\lambda,t}^{2}=\int_{0}^{t}\Big|s^{\frac12-H}\int_{s}^{t}\frac{\tau^{H-\frac12}\bigl(e^{-\lambda(t-s)}-e^{-\lambda(t-\tau)}\bigr)}{(\tau-s)^{\frac32-H}}\dd \tau\Big|^2 \dd s\le C_H\lambda^{-2H}.
\end{equation}
Combining the auxiliary bounds~\eqref{eq:auxmoments0},~\eqref{eq:auxmoments1} and~\eqref{eq:auxmoments2}, one obtains~\eqref{eq:moment1DH-} and the proof of Lemma~\ref{lem:moment1DH-} is completed.
\end{proof}

\subsection{Temporal regularity}

\begin{lemma}\label{lem:temp1DH-}
For all $H\in(0,\frac12)$ and for all $\alpha\in(0,2H)$, there exists $C_{\alpha,H}\in(0,\infty)$ such that for all $\lambda\in(0,\infty)$, and all $t_1,t_2\ge 0$, one has
\begin{equation}\label{eq:temp1DH-}
\E[|\Z^{H,\lambda}(t_2)-\Z^{H,\lambda}(t_1)|^2]\le C_{\alpha,H}\lambda^{\alpha-2H}|t_2-t_1|^\alpha.
\end{equation}
\end{lemma}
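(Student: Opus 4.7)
The plan is to follow the template of the proof of Lemma~\ref{lem:temp1DH+}: start from the decomposition~\eqref{eq:incrementsZHlambda}, estimate the two contributions to $\Z^{H,\lambda}(t_2)-\Z^{H,\lambda}(t_1)$ separately, but use the techniques of Lemma~\ref{lem:moment1DH-} in place of those of Lemma~\ref{lem:moment1DH+}, since $H<1/2$ forces the It\^o isometry~\eqref{eq:Ito} to be handled via the more delicate representation~\eqref{eq:KH-}.

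For the deterministic-factor contribution $\bigl(e^{-\lambda(t_2-t_1)}-1\bigr)\Z^{H,\lambda}(t_1)$, I would combine the elementary bound $|1-e^{-z}|\le z^{\gamma}$ for $\gamma\in[0,1]$ and $z\ge 0$ (applied with $\gamma=\alpha$, which is admissible since $\alpha<2H<1$) with the uniform moment estimate~\eqref{eq:moment1DH-} of Lemma~\ref{lem:moment1DH-}. This directly yields the analogue of~\eqref{eq:aux1tempH+}, namely
\[
\E\bigl[\big|\bigl(e^{-\lambda(t_2-t_1)}-1\bigr)\Z^{H,\lambda}(t_1)\big|^2\bigr]\le C_H\,\lambda^{\alpha-2H}(t_2-t_1)^{\alpha}.
\]

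The main work lies in bounding the stochastic contribution $\int_0^{t_2}\psi_{\lambda,t_1,t_2}(s)\dd\beta^H(s)$. My preferred route exploits two structural properties of fBm in order to bypass redoing the computations of Lemma~\ref{lem:moment1DH-} in full. Stationarity of the increments of fBm gives the equality in law $\int_{t_1}^{t_2}e^{-\lambda(t_2-s)}\dd\beta^H(s)\stackrel{\mathrm{law}}{=}\Z^{H,\lambda}(t_2-t_1)$, and the self-similarity relation $\beta^H(r/\lambda)\stackrel{\mathrm{law}}{=}\lambda^{-H}\beta^H(r)$ then rescales this to
\[
\E\bigl[\big|\Z^{H,\lambda}(t_2-t_1)\big|^2\bigr]=\lambda^{-2H}\E\bigl[\big|\Z^{H,1}(\lambda(t_2-t_1))\big|^2\bigr].
\]
The problem thus reduces to the one-parameter estimate $\E[|\Z^{H,1}(S)|^2]\le C_{H}\min(S^{2H},1)$ for all $S\ge 0$. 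The uniform bound by a constant is Lemma~\ref{lem:moment1DH-} applied with $\lambda=1$; the small-$S$ bound follows from the integration-by-parts identity $\Z^{H,1}(S)=\beta^H(S)-\int_0^S e^{-(S-s)}\beta^H(s)\dd s$, whose first term has $L^2$-norm equal to $S^H$ and whose second is controlled by Minkowski's inequality together with $\E[|\beta^H(s)|^2]=s^{2H}$ to yield a contribution of order $S^{H+1}$. Interpolating $\min(S^{2H},1)\le S^{\alpha}$ for any $\alpha\in[0,2H]$ and substituting $S=\lambda(t_2-t_1)$ then gives the bound $C_{\alpha,H}\lambda^{\alpha-2H}(t_2-t_1)^{\alpha}$ on the stochastic term, and combining it with the estimate on the first term finishes the proof.

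The main potential obstacle, and what motivates the scaling reduction above, is that a direct computation mirroring the $H\in(1/2,1)$ case of Lemma~\ref{lem:temp1DH+} would require applying~\eqref{eq:KH-} to the discontinuous integrand $\psi_{\lambda,t_1,t_2}$ and dissecting the result into the three pieces treated in Lemma~\ref{lem:moment1DH-}, all while carrying the additional parameter $t_2-t_1$ through the successive changes of variable $r=\lambda s$. This is a routine but lengthy bookkeeping exercise that introduces no new analytic ingredient beyond those already developed; the scaling argument above sidesteps it entirely.
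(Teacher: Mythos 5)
Your proposal is correct, but it follows a genuinely different route from the paper's proof. The paper attacks the stochastic term head-on: it applies the isometry~\eqref{eq:Ito} together with the representation~\eqref{eq:KH-} to the discontinuous integrand $\psi_{\lambda,t_1,t_2}$, and then estimates the resulting pieces $\mathcal{J}_{H,\lambda,t_1,t_2}^{1}$, $\mathcal{J}_{H,\lambda,t_1,t_2}^{2,1}$, $\mathcal{J}_{H,\lambda,t_1,t_2}^{2,2}$, $\mathcal{J}_{H,\lambda,t_1,t_2}^{2,3}$ one by one --- precisely the bookkeeping you chose to bypass. Your reduction via stationarity of increments and self-similarity is legitimate: by the paper's integration-by-parts definition, the integral of the $\mathcal{C}^1$ integrand $s\mapsto e^{-\lambda(t_2-s)}$ over $[t_1,t_2]$ is a measurable functional of the increment process $\beta^H(t_1+\cdot)-\beta^H(t_1)$ alone, so stationarity of increments identifies its law with that of $\Z^{H,\lambda}(t_2-t_1)$, and self-similarity (both properties hold at the level of processes and follow from the covariance function recalled in Section~\ref{S.2}) collapses the two parameters $\lambda$ and $t_2-t_1$ into the single quantity $S=\lambda(t_2-t_1)$. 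Your dimensionless bound $\E[|\Z^{H,1}(S)|^2]\le C_H\min\bigl(S^{2H},1\bigr)$ is assembled correctly: the uniform part is Lemma~\ref{lem:moment1DH-} with $\lambda=1$, and the small-$S$ part follows from the identity $\Z^{H,1}(S)=\beta^H(S)-\int_0^S e^{-(S-s)}\beta^H(s)\dd s$ together with Minkowski's inequality. Compared with the paper, your argument is far shorter, treats $H\in(0,1/2)$ and $H\in(1/2,1)$ in a unified manner, and even reaches the endpoint $\alpha=2H$, which the remark following Lemma~\ref{lem:temp1DH-} concedes the paper's method misses when $H<1/2$; the price is reliance on two structural properties of fBm that the paper's self-contained kernel computation never needs to invoke. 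Two small corrections: in the first-term estimate you must take $\gamma=\alpha/2$ rather than $\gamma=\alpha$ (the elementary bound gets squared, so with $\gamma=\alpha$ you would obtain $C_H\lambda^{2\alpha-2H}(t_2-t_1)^{2\alpha}$, not the display you state); this choice is admissible since $\alpha/2<H<1$. Also, you should say explicitly that the equalities in law you use hold at the level of processes, not merely one-dimensional marginals, since $\Z^{H,\lambda}$ is a functional of the whole path of $\beta^H$.
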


\begin{rem}
Note that the inequality~\eqref{eq:temp1DH-} from Lemma~\ref{lem:temp1DH-} is proved only for $\alpha<2H$ (when $H\in(0,1/2)$), whereas the inequality~\eqref{eq:temp1DH+} from Lemma~\ref{lem:temp1DH+} is proved for $\alpha\le 2H$ (when $H\in(1/2,1)$).
\end{rem}

\begin{proof}[Proof of Lemma~\ref{lem:temp1DH-}]
Recall the decomposition~\eqref{eq:incrementsZHlambda} of $\Z^{H,\lambda}(t_2)-\Z^{H,\lambda}(t_1)$.

For the first term in~\eqref{eq:incrementsZHlambda}, using the inequality
\[
|1-e^{-z}|\le z^{\gamma},\qquad \forall~z\ge 0,\quad \forall~\gamma\in[0,1],
\]
and applying the moment bounds~\eqref{eq:moment1DH-} from Lemma~\ref{lem:moment1DH-}, one has
\begin{equation}\label{eq:aux1tempH-}
\E[\big|\bigl(e^{-\lambda(t_2-t_1)}-1\bigr)\Z^{H,\lambda}(t_1)\big|^2]\le C_H\lambda^{\alpha-2H}(t_2-t_1)^{\alpha}.
\end{equation}

For the second term in~\eqref{eq:incrementsZHlambda}, applying the It\^o isometry formula~\eqref{eq:Ito} and using the expression~\eqref{eq:KH-} for $\mathcal{K}_{H,T}^\star\phi (t)$ when $H\in(0,\frac12)$, one has
\begin{align*}
\E[|\int_{t_1}^{t_2}e^{-\lambda(t_2-s)}\dd\beta^H(s)|^2]&=\int_{0}^{t_2}|\mathcal{K}_{H,t_2}^\star\psi_{\lambda,t_1t_2}(s)|^2 \dd s\\
&\le C_H\int_{0}^{t_2}\frac{|\psi_{\lambda,t_1,t_2}(s)|^2}{(t_2-s)^{1-2H}}\dd s\\
&+C_H\int_{0}^{t_2}\Big|s^{\frac12-H}\int_{s}^{t_2}\frac{s^{H-\frac12}\psi_{\lambda,t_1,t_2}(s)-\tau^{H-\frac12}\psi_{\lambda,t_1,t_2}(\tau)}{(\tau-s)^{\frac32-H}}\dd \tau\Big|^2 \dd s.
\end{align*}
For the first term, for all $t_2\ge t_1\ge 0$, one has
\begin{align*}
\int_{0}^{t_2}\frac{|\psi_{\lambda,t_1,t_2}(s)|^2}{(t-s)^{1-2H}}\dd s&=\int_{t_1}^{t_2}\frac{e^{-2\lambda(t_2-s)}}{(t_2-s)^{1-2H}}\dd s=\int_{0}^{t_2-t_1}\frac{e^{-2\lambda s}}{s^{1-2H}}\dd s\\
&=\lambda^{-2H}\int_{0}^{\lambda(t_2-t_1)}\frac{e^{-2r}}{r^{1-2H}}\dd r,
\end{align*}
using a change of $r=\lambda s$. There exists $C_H\in(0,\infty)$ such that the following holds: for all $\alpha\in[0,1]$ and all $\theta\ge 0$, one has
\[
\int_{0}^{\theta}\frac{e^{-2r}}{r^{1-2H}}\dd r\le C_H\theta^{2H}\mathds{1}_{\theta\le 1}+\mathds{1}_{\theta\ge 1}\int_{0}^{\infty}\frac{e^{-2r}}{r^{1-2H}}\dd r\le C_H\min(\theta,1)\le C_H\theta^{\alpha}.
\]
Applying the inequality above with $\theta=\lambda(t_2-t_1)$, one obtains the following inequality: there exists $C_H\in(0,\infty)$ such that for all $\alpha\in[0,2H]$, $\lambda\in(0,\infty)$ and $t_2\ge t_1\ge 0$, one has
\begin{equation}\label{eq:auxtemp0}
\int_{0}^{t_2}\frac{|\psi_{\lambda,t_1,t_2}(s)|^2}{(t-s)^{1-2H}}\dd s\le C_H\lambda^{\alpha-2H}(t_2-t_1)^{\alpha}.
\end{equation}

For the second term, note that one has the decomposition
\[
s^{H-\frac12}\psi_{\lambda,t_1,t_2}(s)-\tau^{H-\frac12}\psi_{\lambda,t_1,t_2}(\tau)=\bigl(s^{H-\frac12}-\tau^{H-\frac12}\bigr)\psi_{\lambda,t_1,t_2}(s)+\tau^{H-\frac12}\bigl(\psi_{\lambda,t_1,t_2}(s)-\psi_{\lambda,t_1,t_2}(\tau)\bigr),
\]
therefore one obtains the upper bound
\[
\int_{0}^{t_2}\Big|s^{\frac12-H}\int_{s}^{t_2}\frac{s^{H-\frac12}\psi_{\lambda,t_1,t_2}(s)-\tau^{H-\frac12}\psi_{\lambda,t_1,t_2}(\tau)}{(\tau-s)^{\frac32-H}}\dd \tau\Big|^2 \dd s\le 2\bigl(\mathcal{J}_{H,\lambda,t_1,t_2}^1+\mathcal{J}_{H,\lambda,t_1,t_2}^2\bigr),
\]
where $\mathcal{J}_{H,\lambda,t_1,t_2}^1$ and $\mathcal{J}_{H,\lambda,t_1,t_2}^2$ are defined by
\begin{align*}
\mathcal{J}_{H,\lambda,t_1,t_2}^1&=\int_{0}^{t_2}\Big|s^{\frac12-H}\psi_{\lambda,t_1,t_2}(s)\int_{s}^{t_2}\frac{s^{H-\frac12}-\tau^{H-\frac12}}{(\tau-s)^{\frac32-H}}\dd \tau\Big|^2 \dd s\\
\mathcal{J}_{H,\lambda,t_1,t_2}^2&=\int_{0}^{t_2}\Big|s^{\frac12-H}\int_{s}^{t_2}\frac{\tau^{H-\frac12}\bigl(\psi_{\lambda,t_1,t_2}(s)-\psi_{\lambda,t_1,t_2}(\tau)\bigr)}{(\tau-s)^{\frac32-H}}\dd \tau\Big|^2 \dd s.
\end{align*}
To deal with the term $\mathcal{J}_{H,\lambda,t_1,t_2}^1$, applying the inequality~\eqref{eq:claim} (see the proof of Lemma~\ref{lem:moment1DH-}), one obtains
\begin{align*}
\int_{0}^{t_2}\Big|s^{\frac12-H}\psi_{\lambda,t_1,t_2}(s)\int_{s}^{t_2}\frac{s^{H-\frac12}-\tau^{H-\frac12}}{(\tau-s)^{\frac32-H}}\dd \tau\Big|^2 \dd s
&\le C_H\int_{0}^{t_2}s^{2H-1}\psi_{\lambda,t_1,t_2}(s)^2 \dd s\\
&\le C_H\int_{t_1}^{t_2}s^{2H-1}e^{-2\lambda(t_2-s)}\dd s\\
&\le C_H\lambda^{-2H}\int_{\lambda t_1}^{\lambda t_2}r^{2H-1}e^{-2(\lambda t_2-r)}\dd r,
\end{align*}
using a change of variables $r=\lambda s$.

We prove another auxiliary inequality: for all $\alpha\in[0,2H]$, there exists $C_{\alpha,H}\in(0,\infty)$ such that for all one has
\begin{equation}\label{eq:claim2}
\int_{\lambda t_1}^{\lambda t_2}r^{2H-1}e^{-2(\lambda t_2-r)}\dd r\le C_{\alpha,H}\lambda^\alpha (t_2-t_1)^{\alpha},\qquad \forall~t_2\ge t_1\ge 0.
\end{equation}
The inequality~\eqref{eq:claim2} is proved as follows. First, for all $t_2\ge t_1\ge 0$, one has
\begin{align*}
\int_{\lambda t_1}^{\lambda t_2}r^{2H-1}e^{-2(\lambda t_2-r)}\dd r&\le \int_{0}^{\lambda t_2}r^{2H-1}e^{-2(\lambda t_2-r)}\dd r\\
&\le \int_{0}^{1}r^{2H-1}e^{-2(\lambda t_2-r)}\dd r+\mathds{1}_{\lambda t_2\ge 1}\int_{1}^{\lambda t_2}r^{2H-1}e^{-2(\lambda t_2-r)}\dd r\\
&\le \int_{0}^{1}r^{2H-1}\dd r+\int_{0}^{\lambda t_2}e^{-2(\lambda t_2-r)}\dd r\\
&\le \frac{1}{2H}+\frac{1}{2}.
\end{align*}
Second, assuming that $\lambda(t_2-t_1)\le 1$, one has
\[
\int_{\lambda t_1}^{\lambda t_2}r^{2H-1}e^{-2(\lambda t_2-r)}\dd r\le \int_{\lambda t_1}^{\lambda t_2}r^{2H-1}\dd r=\frac{(\lambda t_2)^{2H}-(\lambda t_1)^{2H}}{2H}\le C_H\bigl(\lambda(t_2-t_1)\bigr)^{2H}.
\]
Combining the two upper bounds above, one obtains the following inequality: there exists $C_H\in(0,\infty)$ such that, for all $\lambda\in(0,\infty)$, $\alpha\in[0,2H]$ and $t_2\ge t_1\ge 0$, one has
\[
\int_{\lambda t_1}^{\lambda t_2}r^{2H-1}e^{-2(\lambda t_2-r)}\dd r\le C_H\min\bigl(1,\lambda(t_2-t_1)\bigr)^{2H}\le C_H\min\bigl(1,\lambda(t_2-t_1)\bigr)^{\alpha}\le C_H\lambda^\alpha (t_2-t_1)^\alpha,
\]
and the proof of the inequality~\eqref{eq:claim2} is completed.

Applying the inequality~\eqref{eq:claim2}, for the term $\mathcal{J}_{H,\lambda,t_1,t_2}^1$, one then obtains the following upper bound: there exists $C_H\in(0,\infty)$ such that for all $\alpha\in[0,2H]$, $\lambda\in(0,\infty)$ and $t_2\ge t_1\ge 0$ one has
\begin{equation}\label{eq:auxtemp1}
\mathcal{J}_{H,\lambda,t_1,t_2}^1=\int_{0}^{t_2}\Big|s^{\frac12-H}\psi_{\lambda,t_1,t_2}(s)\int_{s}^{t_2}\frac{s^{H-\frac12}-\tau^{H-\frac12}}{(\tau-s)^{\frac32-H}}\dd \tau\Big|^2 \dd s\le C_{\alpha,H}\lambda^{\alpha-2H} (t_2-t_1)^{\alpha}.
\end{equation}

It remains to deal with the error term $\mathcal{J}_{H,\lambda,t_1,t_2}^2$. First, one has the upper bound
\begin{align*}
\int_{0}^{t_2}\Big|s^{\frac12-H}\int_{s}^{t_2}\frac{\tau^{H-\frac12}\bigl(\psi_{\lambda,t_1,t_2}(s)-\psi_{\lambda,t_1,t_2}(\tau)\bigr)}{(\tau-s)^{\frac32-H}}\dd \tau\Big|^2 \dd s\le \int_{0}^{t_2}\Big|\int_{s}^{t_2}\frac{\big|\psi_{\lambda,t_1,t_2}(s)-\psi_{\lambda,t_1,t_2}(\tau)\big|}{(\tau-s)^{\frac32-H}}\dd \tau\Big|^2 \dd s.
\end{align*}

For all $\tau\in[s,t_2]$, one has
\begin{align*}
\psi_{\lambda,t_1,t_2}(s)-\psi_{\lambda,t_1,t_2}(\tau)&=e^{-\lambda(t_2-s)}\mathds{1}_{s\ge t_1}-e^{-\lambda(t_2-\tau)}\mathds{1}_{\tau\ge t_1}\\
&=e^{-\lambda(t_2-s)}\bigl(\mathds{1}_{s\ge t_1}-\mathds{1}_{\tau\ge t_1}\bigr)+\bigl(e^{-\lambda(t_2-s)}-e^{-\lambda(t_2-\tau)}\bigr)\mathds{1}_{\tau\ge t_1},
\end{align*}
and note that the following identity is satisfied:
\[
\mathds{1}_{\tau\ge s}\bigl(\mathds{1}_{s\ge t_1}-\mathds{1}_{\tau\ge t_1}\bigr)=-\mathds{1}_{\tau\ge t_1>s}.
\]
As a result, the error term $\mathcal{J}_{H,\lambda,t_1,t_2}^{2}$ can be treated as follows: one has
\begin{align*}
\mathcal{J}_{H,\lambda,t_1,t_2}^{2}&=\int_{0}^{t_2}\Big|\int_{s}^{t_2}\frac{\big|\psi_{\lambda,t_1,t_2}(s)-\psi_{\lambda,t_1,t_2}(\tau)\big|}{(\tau-s)^{\frac32-H}}\dd \tau\Big|^2 \dd s\\
&\le 2\int_{0}^{t_2}\Big|\int_{s}^{t_2}\frac{e^{-\lambda(t_2-s)}\mathds{1}_{\tau\ge t_1>s}}{(\tau-s)^{\frac32-H}}\dd \tau\Big|^2 \dd s+2\int_{0}^{t_2}\Big|\int_{s}^{t_2}\frac{\big|e^{-\lambda(t_2-s)}-e^{-\lambda(t_2-\tau)}\big|\mathds{1}_{\tau\ge t_1}}{(\tau-s)^{\frac32-H}}\dd \tau\Big|^2 \dd s\\
&\le 2\bigl(\mathcal{J}_{H,\lambda,t_1,t_2}^{2,1}+\mathcal{J}_{H,\lambda,t_1,t_2}^{2,2}+\mathcal{J}_{H,\lambda,t_1,t_2}^{2,3}\bigr),
\end{align*}
where $\mathcal{J}_{H,\lambda,t_1,t_2}^{2,1}$, $\mathcal{J}_{H,\lambda,t_1,t_2}^{2,2}$ and $\mathcal{J}_{H,\lambda,t_1,t_2}^{2,3}$ are defined by
\begin{align*}
\mathcal{J}_{H,\lambda,t_1,t_2}^{2,1}&=2\int_{0}^{t_1}\Big|\int_{t_1}^{t_2}\frac{e^{-\lambda(t_2-s)}}{(\tau-s)^{\frac32-H}}\dd \tau\Big|^2 \dd s\\
\mathcal{J}_{H,\lambda,t_1,t_2}^{2,2}&=2\int_{0}^{t_1}\Big|\int_{t_1}^{t_2}\frac{\big|e^{-\lambda(t_2-s)}-e^{-\lambda(t_2-\tau)}\big|}{(\tau-s)^{\frac32-H}}\dd \tau\Big|^2 \dd s\\
\mathcal{J}_{H,\lambda,t_1,t_2}^{2,3}&=2\int_{t_1}^{t_2}\Big|\int_{s}^{t_2}\frac{\big|e^{-\lambda(t_2-s)}-e^{-\lambda(t_2-\tau)}\big|}{(\tau-s)^{\frac32-H}}\dd \tau\Big|^2 \dd s.
\end{align*}
For the term $\mathcal{J}_{H,\lambda,t_1,t_2}^{2,1}$, one has
\begin{align*}
\int_{0}^{t_1}\Big|\int_{t_1}^{t_2}\frac{e^{-\lambda(t_2-s)}}{(\tau-s)^{\frac32-H}}\dd \tau\Big|^2 \dd s&=\int_{0}^{t_1}e^{-2\lambda(t_2-s)}\Big|\int_{t_1}^{t_2}\frac{1}{(\tau-s)^{\frac32-H}}\dd \tau\Big|^2 \dd s\\
&=\int_{0}^{t_1}e^{-2\lambda(t_2-s)}\Big|\frac{(t_2-s)^{H-\frac12}-(t_1-s)^{H-\frac12}}{H-\frac12}\Big|^2\dd s\\
&\le C_He^{-2\lambda(t_2-t_1)}\int_{0}^{t_1}e^{-2\lambda(t_1-s)}\bigl[(t_2-t_1+t_1-s)^{H-\frac12}-(t_1-s)^{H-\frac12}\bigr]^2 \dd s\\
&\le C_H\int_{0}^{t_1}e^{-2\lambda s}\bigl[(t_2-t_1+s)^{H-\frac12}-s^{H-\frac12}\bigr]^2 \dd s.
\end{align*}
Using a change of variable $r=\lambda s$, one has
\begin{align*}
\int_{0}^{t_1}e^{-2\lambda s}\bigl[(t_2-t_1+s)^{H-\frac12}-s^{H-\frac12}\bigr]^2 \dd s&=\lambda^{-2H}\int_{0}^{\lambda t_1}e^{-2r}\bigl[(\lambda(t_2-t_1)+r)^{H-\frac12}-r^{H-\frac12}\bigr]^2\dd r.
\end{align*}
Combining the inequalities
\[
\big|(\theta+r)^{H-\frac12}-r^{H-\frac12}\big|\le 2r^{H-\frac12},\quad \forall~\theta,r>0,
\]
and
\[
\big|(\theta+r)^{H-\frac12}-r^{H-\frac12}\big|\le C_H\theta r^{H-\frac32},\quad \forall~\theta,r>0,
\]
for all $\alpha\in[0,1]$ one obtains
\[
\big|(\theta+r)^{H-\frac12}-r^{H-\frac12}\big|\le C_{\alpha,H}\frac{\theta^{\alpha}}{r^{\alpha+\frac12-H}}.
\]
As a result, if the condition $\alpha<2H$ is satisfied, one obtains
\begin{align*}
\int_{0}^{\lambda t_1}e^{-2r}\bigl[(\lambda(t_2-t_1)+r)^{H-\frac12}-r^{H-\frac12}\bigr]^2\dd r&\le C_{\alpha,H}\bigl(\lambda(t_2-t_1)\bigr)^{\alpha}\int_{0}^{\lambda t_1}\frac{1}{r^{\alpha+1-2H}}\dd r\\
&\le C_{\alpha,H}\int_{0}^{\infty}\frac{1}{r^{\alpha+1-2H}}\dd r \lambda^\alpha (t_2-t_1)^\alpha.
\end{align*}
Therefore, for all $\alpha\in[0,2H)$, there exists $C_{\alpha,H}\in(0,\infty)$ such that for all $\lambda\in(0,\infty)$ and $t_2\ge t_1\ge 0$, one has
\begin{equation}\label{eq:auxtemp21}
\mathcal{J}_{H,\lambda,t_1,t_2}^{2,1}\le C_{\alpha,H}\lambda^{\alpha-2H} (t_2-t_1)^\alpha.
\end{equation}


For the term $\mathcal{J}_{H,\lambda,t_1,t_2}^{2,2}$, applying the Cauchy--Schwarz inequality, one has
\begin{align*}
\mathcal{J}_{H,\lambda,t_1,t_2}^{2,2}&=\int_{0}^{t_1}\Big|\int_{t_1}^{t_2}\frac{\big|e^{-\lambda(t_2-s)}-e^{-\lambda(t_2-\tau)}\big|}{(\tau-s)^{\frac32-H}}\dd \tau\Big|^2 \dd s\\
&=\int_{0}^{t_1}\Big|\int_{t_1}^{t_2}\frac{e^{-\lambda(t_2-\tau)}\bigl(1-e^{-\lambda(\tau-s)}\bigr)}{(\tau-s)^{\frac32-H}}\dd \tau\Big|^2 \dd s\\
&\le \int_{0}^{t_1}\Bigl(\int_{t_1}^{t_2}e^{-\lambda(t_2-\tau)}\dd \tau\Bigr)\Bigl(\int_{t_1}^{t_2}e^{-\lambda(t_2-\tau)}\frac{\bigl(1-e^{-\lambda(\tau-s)}\bigr)^2}{(\tau-s)^{3-2H}}\dd \tau\Bigr)\dd s\\
&\le \frac{1-e^{-\lambda(t_2-t_1)}}{\lambda}\int_{0}^{t_1}\int_{t_1}^{t_2}e^{-\lambda(t_2-\tau)}\frac{\bigl(1-e^{-\lambda(\tau-s)}\bigr)^2}{(\tau-s)^{3-2H}}\dd \tau \dd s.
\end{align*}
Note that for all $\alpha\in[0,1]$, one has
\[
1-e^{-\lambda(t_2-t_1)}\le \bigl(\lambda(t_2-t_1)\bigr)^\alpha.
\]
Using changes of variables $u=\lambda \tau$ and $v=\lambda s$, then $u'=\lambda t_2-u$ and $v'=\lambda t_2-v$, one obtains
\begin{align*}
\int_{0}^{t_1}\int_{t_1}^{t_2}e^{-\lambda(t_2-\tau)}\frac{\bigl(1-e^{-\lambda(\tau-s)}\bigr)^2}{(\tau-s)^{3-2H}}\dd \tau \dd s&=\lambda^{1-2H}\int_{0}^{\lambda t_1}\int_{\lambda t_1}^{\lambda t_2}e^{-(\lambda t_2-u)}\frac{\bigl(1-e^{-(u-v)}\bigr)^2}{(u-v)^{3-2H}}\dd u \dd v\\
&=\lambda^{1-2H}\int_{\lambda(t_2-t_1)}^{\lambda t_2}\int_{0}^{\lambda (t_2-t_1)}e^{-u'}\frac{\bigl(1-e^{-(v'-u')}\bigr)^2}{(v'-u')^{3-2H}}\dd u' \dd v'\\
&\le\lambda^{1-2H}\int_{0}^{\lambda (t_2-t_1)}e^{-u'}\int_{\lambda(t_2-t_1)}^{\lambda t_2}\frac{\bigl(1-e^{-(v'-u')}\bigr)^2}{(v'-u')^{3-2H}}\dd v' \dd u'\\
&\le \lambda^{1-2H}\int_{0}^{\infty}e^{-u}\dd u \int_{0}^{\infty}\frac{\bigl(1-e^{-w}\bigr)^2}{w^{3-2H}}\dd w\\
&\le C_H\lambda^{1-2H}.
\end{align*}
Therefore, for all $\alpha\in[0,2H)$, there exists $C_{\alpha,H}\in(0,\infty)$ such that for all $\lambda\in(0,\infty)$ and $t_2\ge t_1\ge 0$, one has
\begin{equation}\label{eq:auxtemp22}
\mathcal{J}_{H,\lambda,t_1,t_2}^{2,2}\le C_{\alpha,H}\lambda^{\alpha-2H}(t_2-t_1)^{\alpha}.
\end{equation}
For the term $\mathcal{J}_{H,\lambda,t_1,t_2}^{2,3}$, one has
\begin{align*}
\int_{t_1}^{t_2}\Big|\int_{s}^{t_2}\frac{\big|e^{-\lambda(t_2-s)}-e^{-\lambda(t_2-\tau)}\big|}{(\tau-s)^{\frac32-H}}\dd \tau\Big|^2 \dd s&=\int_{t_1}^{t_2}\Big|\int_{s}^{t_2}\frac{e^{-\lambda(t_2-\tau)}\bigl(1-e^{-\lambda(\tau-s)}\bigr)}{(\tau-s)^{\frac32-H}}\dd \tau\Big|^2 \dd s\\
&\le \int_{t_1}^{t_2}\Bigl(\int_{s}^{t_2}e^{-\lambda(t_2-\tau)}\dd \tau\Bigr)\Bigl(\int_{s}^{t_2}e^{-\lambda(t_2-\tau)}\frac{\bigl(1-e^{-\lambda(\tau-s)}\bigr)^2}{(\tau-s)^{3-2H}}\dd \tau\Bigr)\dd s.
\end{align*}
Note that for all $t_2\ge s\ge 0$ and $\lambda\in(0,\infty)$, one has
\[
\int_{s}^{t_2}e^{-\lambda(t_2-\tau)}\dd \tau=\lambda^{-1}\bigl(1-e^{-\lambda(t_2-s)}\bigr)\le \lambda^{-1}.
\]
Thus, using a change of variables $u=\lambda\tau$ and $v=\lambda s$, one obtains
\begin{align*}
\int_{t_1}^{t_2}\Big|\int_{s}^{t_2}\frac{\big|e^{-\lambda(t_2-s)}-e^{-\lambda(t_2-\tau)}\big|}{(\tau-s)^{\frac32-H}}\dd \tau\Big|^2 \dd s
&\le \lambda^{-1}\int_{t_1}^{t_2}\int_{s}^{t_2}e^{-\lambda(t_2-\tau)}\frac{\bigl(1-e^{-\lambda(\tau-s)}\bigr)^2}{(\tau-s)^{3-2H}}\dd \tau \dd s\\
&\le \lambda^{-2H}\int_{\lambda t_1}^{\lambda t_2}\int_{v}^{\lambda t_2}e^{-(\lambda t_2-u)}\frac{\bigl(1-e^{-(u-v)}\bigr)^2}{(u-v)^{3-2H}}\dd u \dd v\\
&\le \lambda^{-2H}\int_{\lambda t_1}^{\lambda t_2}\int_{\lambda t_1}^{u}e^{-(\lambda t_2-u)}\frac{\bigl(1-e^{-\lambda(u-v)}\bigr)^2}{(u-v)^{3-2H}}\dd v \dd u\\
&\le \lambda^{-2H}\int_{\lambda t_1}^{\lambda t_2}e^{-(\lambda t_2-u)}\dd u \int_{0}^{\infty}\frac{\bigl(1-e^{-w}\bigr)^2}{w^{3-2H}}\dd w.
\end{align*}
Note that for all $\alpha\in[0,2H]$ one has
\[
\int_{\lambda t_1}^{\lambda t_2}e^{-(\lambda t_2-u)}\dd u\le \bigl(\lambda(t_2-t_1)\bigr)^{\alpha}.
\]

Therefore, for all $\alpha\in[0,2H)$, there exists $C_{\alpha,H}\in(0,\infty)$ such that for all $\lambda\in(0,\infty)$ and $t_2\ge t_1\ge 0$, one has
\begin{equation}\label{eq:auxtemp23}
\mathcal{J}_{H,\lambda,t_1,t_2}^{2,3}\le C_{\alpha,H}\lambda^{\alpha-2H}(t_2-t_1)^{\alpha}.
\end{equation}

Recalling that
\[
\mathcal{J}_{H,\lambda,t_1,t_2}^{2}\le 2\bigl(\mathcal{J}_{H,\lambda,t_1,t_2}^{2,1}+\mathcal{J}_{H,\lambda,t_1,t_2}^{2,2}+\mathcal{J}_{H,\lambda,t_1,t_2}^{2,3}\bigr)
\]
and combining the inequalities~\eqref{eq:auxtemp21},~\eqref{eq:auxtemp22} and~\eqref{eq:auxtemp23} yields the following inequality: for all $\alpha\in[0,2H)$, there exists $C_{\alpha,H}\in(0,\infty)$ such that for all $\lambda\in(0,\infty)$ and $t_2\ge t_1\ge 0$, one has
\begin{equation}\label{eq:auxtemp2}
\mathcal{J}_{H,\lambda,t_1,t_2}^{2}=\int_{0}^{t_2}\Big|\int_{s}^{t_2}\frac{\big|\psi_{\lambda,t_1,t_2}(s)-\psi_{\lambda,t_1,t_2}(\tau)\big|}{(\tau-s)^{\frac32-H}}\dd \tau\Big|^2 \dd s\le C_{\alpha,H}\lambda^{\alpha-2H} (t_2-t_1)^{\alpha}.
\end{equation}

Finally, combining the inequalities~\eqref{eq:auxtemp0},~\eqref{eq:auxtemp1} and~\eqref{eq:auxtemp2}, one obtains the following inequality: for all $\alpha\in[0,2H)$, there exists $C_{\alpha,H}\in(0,\infty)$ such that for all $\lambda\in(0,\infty)$ and $t_2\ge t_1\ge 0$, one has
\begin{equation}\label{eq:aux2tempH-}
\E[|\Z^{H,\lambda}(t_2)-\Z^{H,\lambda}(t_1)|^2]\le C_{\alpha,H}\lambda^{\alpha-2H} (t_2-t_1)^{\alpha}
\end{equation}

Gathering the estimates~\eqref{eq:aux1tempH-} and~\eqref{eq:aux2tempH-} and recalling the decomposition~\eqref{eq:incrementsZHlambda}, one obtains~\eqref{eq:temp1DH-} and the proof of Lemma~\ref{lem:temp1DH-} is completed.
\end{proof}

\section{Appendix: error estimates for the numerical approximation of the fractional Ornstein--Uhlenbeck process}\label{app:D}

\subsection{Case $H\in(1/2,1)$}

\begin{lemma}\label{lem:error1DH+}
For all $H\in(1/2,1)$, there exists $C_H\in(0,\infty)$ such that for  all $\lambda\in(0,\infty)$ and all $\Delta t\in(0,1)$ one has
\begin{equation}\label{eq:error1DH+}
\underset{n\in\N}\sup~\E[|\Z^{H,\lambda}(t_n)-\Z_n^{H,\lambda}|^2]\le C_H\lambda^{-2H}\min\bigl(1,\lambda \Delta t\bigr)^2.
\end{equation}
\end{lemma}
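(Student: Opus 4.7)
The plan is to reduce the error estimate to the double-integral expression already exploited in the proofs of Lemma~\ref{lem:moment1DH+} and Lemma~\ref{lem:temp1DH+}, and then simply extract a factor $\min(1,\lambda\Delta t)^2$ from a pointwise bound on the error kernel $\varepsilon_{\lambda,t_n}^{\Delta t}$.

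Starting from the identity~\eqref{eq:errorZnHlambda}, I would apply the It\^o isometry~\eqref{eq:Ito} together with the alternative expression~\eqref{eq:Ito+} valid for $H\in(1/2,1)$, to get
\[
\E[|\Z^{H,\lambda}(t_n)-\Z_n^{H,\lambda}|^2]=H(2H-1)\int_{0}^{t_n}\!\!\int_{0}^{t_n}\varepsilon_{\lambda,t_n}^{\Delta t}(s_1)\,\varepsilon_{\lambda,t_n}^{\Delta t}(s_2)\,|s_2-s_1|^{2H-2}\dd s_1\dd s_2.
\]
The key observation is that for all $s\in[t_j,t_{j+1})\subset[0,t_n]$ one has
\[
\varepsilon_{\lambda,t_n}^{\Delta t}(s)=e^{-\lambda(t_n-s)}-e^{-\lambda(t_n-t_j)}=e^{-\lambda(t_n-s)}\bigl(1-e^{-\lambda(s-t_j)}\bigr)\ge 0,
\]
and since $0\le s-t_j\le \Delta t$, the elementary inequality $1-e^{-z}\le \min(z,1)$ yields
\[
0\le \varepsilon_{\lambda,t_n}^{\Delta t}(s)\le \min\bigl(1,\lambda\Delta t\bigr)\,e^{-\lambda(t_n-s)},\quad \forall~s\in[0,t_n].
\]
Both factors of the integrand being nonnegative, I can substitute this bound to obtain
\[
\E[|\Z^{H,\lambda}(t_n)-\Z_n^{H,\lambda}|^2]\le H(2H-1)\min(1,\lambda\Delta t)^2\int_{0}^{t_n}\!\!\int_{0}^{t_n}e^{-\lambda(t_n-s_1)}e^{-\lambda(t_n-s_2)}|s_2-s_1|^{2H-2}\dd s_1\dd s_2.
\]

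At this point the remaining double integral is exactly the one bounded in the proof of Lemma~\ref{lem:moment1DH+}: using the translation $s_i\to t_n-s_i$, the rescaling $s_i=\lambda r_i$ and the Fubini argument given there, it is dominated by $C_H\lambda^{-2H}$ uniformly in $t_n\ge 0$. Combining the two estimates yields the claimed inequality~\eqref{eq:error1DH+}, uniformly in $n\in\N$ and $\Delta t\in(0,1)$. I do not anticipate any real obstacle here: the work is essentially the pointwise domination of $\varepsilon_{\lambda,t_n}^{\Delta t}$, and everything else is a direct quotation of the computation already carried out for the moment bound. The only mild subtlety is to check the sign of $\varepsilon_{\lambda,t_n}^{\Delta t}$, which ensures that the positivity of the kernel $|s_2-s_1|^{2H-2}$ can be exploited without cancellations.
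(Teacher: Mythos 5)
Your proof is correct and follows essentially the same route as the paper's: the It\^o isometry in the form~\eqref{eq:Ito+}, the pointwise bound $0\le\varepsilon_{\lambda,t_n}^{\Delta t}(s)\le\min\bigl(1,\lambda\Delta t\bigr)e^{-\lambda(t_n-s)}$, and re-use of the double-integral estimate from the proof of Lemma~\ref{lem:moment1DH+}. The only difference is that you make explicit the nonnegativity of $\varepsilon_{\lambda,t_n}^{\Delta t}$ justifying the substitution under the positive kernel, a point the paper leaves implicit.
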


\begin{proof}[Proof of Lemma~\ref{lem:error1DH+}]
Recall the expression~\eqref{eq:errorZnHlambda} of the error $\Z^{H,\lambda}(t_n)-\Z_n^{H,\lambda}$. Applying the It\^o isometry formula~\eqref{eq:Ito} and its formulation~\eqref{eq:Ito+} when $H\in(1/2,1)$, for all $n\ge 0$ one has
\begin{align*}
\E[|\Z^{H,\lambda}(t_n)-\Z_n^{H,\lambda}|^2]&=\int_{0}^{t_n}|\mathcal{K}_{H,t_n}^\star\varepsilon_{\lambda,t_n}^{\Delta t}(t)|^2 \dd t\\
&=H(2H-1)\int_{0}^{t_n}\int_{0}^{t_n}\varepsilon_{\lambda,t_n}^{\Delta t}(s_1)\varepsilon_{\lambda,t_n}^{\Delta t}(s_2)|s_2-s_1|^{2H-2}\dd s_1\dd s_2.
\end{align*}
Note that for all $s\in[0,t_n]$, one has
\[
\varepsilon_{\lambda,t_n}^{\Delta t}(s)=e^{-\lambda(t_n-s)}\bigl(1-e^{-\lambda(s-t_{\ell}(s))}\bigr),
\]
and that one has
\[
\bigl(1-e^{-\lambda(s-t_{\ell}(s))}\bigr)\le \min\bigl(1,\lambda\Delta t\bigr).
\]
As a result, one obtains the inequality
\[
\E[|X^{H,\lambda}(t_n)-x_n^{H,\lambda}|^2]\le \min\bigl(1,\lambda\Delta t\bigr)^2H(2H-1)\int_{0}^{t}\int_{0}^{t}e^{-\lambda(t-s_1)}e^{-\lambda(t-s_2)}|s_2-s_1|^{2H-2}\dd s_1 \dd s_2.
\]
Applying the upper bounds obtained in the proof of Lemma~\ref{lem:moment1DH+}, the inequality~\eqref{eq:error1DH+} holds, and the proof of Lemma~\ref{lem:error1DH+} is completed.
\end{proof}

Note that combining Lemma~\ref{lem:moment1DH+} and~\ref{lem:error1DH+}, one obtains the following moment bounds for the discrete-time approximation $\bigl(\Z_n^{H,\lambda}\bigr)_{n\ge 0}$ of $\bigl(\Z^{H,\lambda}(t_n)\bigr)_{n\ge 0}$.
\begin{lemma}\label{lem:momentnum1DH+}
For all $H\in(1/2,1)$, there exists $C_H\in(0,\infty)$ such that for  all $\lambda\in(0,\infty)$ one has
\begin{equation}\label{eq:momentnum1DH+}
\underset{\Delta t\in(0,1)}\sup~\underset{n\ge 0}\sup~\E[|\Z_n^{H,\lambda}|^2]\le C_H\lambda^{-2H}.
\end{equation}
\end{lemma}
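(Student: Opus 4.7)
The statement to prove is that $\underset{\Delta t\in(0,1)}\sup~\underset{n\ge 0}\sup~\E[|\Z_n^{H,\lambda}|^2]\le C_H\lambda^{-2H}$, and the excerpt explicitly signals that the result follows by combining the two preceding lemmas. The plan is therefore to exploit the decomposition
\[
\Z_n^{H,\lambda} = \Z^{H,\lambda}(t_n) - \bigl(\Z^{H,\lambda}(t_n) - \Z_n^{H,\lambda}\bigr)
\]
and control both terms uniformly in $n$ and $\Delta t$ via Lemma~\ref{lem:moment1DH+} and Lemma~\ref{lem:error1DH+} respectively.

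Concretely, I would first apply the elementary inequality $(a+b)^2\le 2a^2+2b^2$, or equivalently the Minkowski inequality in $L^2(\Omega)$, to get
\[
\E[|\Z_n^{H,\lambda}|^2]\le 2\E[|\Z^{H,\lambda}(t_n)|^2]+2\E[|\Z^{H,\lambda}(t_n)-\Z_n^{H,\lambda}|^2].
\]
Then the first term is bounded by $2C_H\lambda^{-2H}$, uniformly in $t\ge 0$, directly by the moment bound~\eqref{eq:moment1DH+} of Lemma~\ref{lem:moment1DH+}. For the second term, the error estimate~\eqref{eq:error1DH+} of Lemma~\ref{lem:error1DH+} yields
\[
\E[|\Z^{H,\lambda}(t_n)-\Z_n^{H,\lambda}|^2]\le C_H\lambda^{-2H}\min\bigl(1,\lambda\Delta t\bigr)^2\le C_H\lambda^{-2H},
\]
uniformly in $n\ge 0$ and $\Delta t\in(0,1)$. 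Adding the two contributions and taking the suprema yields~\eqref{eq:momentnum1DH+} with a constant depending only on $H$.

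Because both auxiliary lemmas are already in hand, there is essentially no obstacle in this proof: the only subtlety is noting that the bound on the error in Lemma~\ref{lem:error1DH+} is already of the same order $\lambda^{-2H}$ as the bound on the exact process itself (thanks to the factor $\min(1,\lambda\Delta t)^2\le 1$), so neither term dominates in a way that would spoil the uniform estimate. Consequently the statement reduces to a one-line application of the triangle inequality, and no further computation is required.
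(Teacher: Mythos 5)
Your proof is correct and follows exactly the route the paper intends: the paper gives no separate proof of Lemma~\ref{lem:momentnum1DH+} but states that it follows by ``combining Lemma~\ref{lem:moment1DH+} and~\ref{lem:error1DH+}'', which is precisely your decomposition $\Z_n^{H,\lambda}=\Z^{H,\lambda}(t_n)-\bigl(\Z^{H,\lambda}(t_n)-\Z_n^{H,\lambda}\bigr)$ together with the bound $\min\bigl(1,\lambda\Delta t\bigr)^2\le 1$. Nothing is missing.
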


\subsection{Case $H\in(0,1/2)$}

\begin{lemma}\label{lem:error1DH-}
For all $H\in(0,\frac12)$ and for all $\alpha\in[0,2H)$, there exists $C_{\alpha,H}\in(0,\infty)$ such that for all $\lambda\in(0,\infty)$ and all $\Delta t\in(0,1)$ one has
\begin{equation}\label{eq:error1DH-}
\underset{n\in\N}\sup~\E[|\Z^{H,\lambda}(t_n)-\Z_n^{H,\lambda}|^2]\le C_H\lambda^{-2H}(\lambda \Delta t)^{2\alpha}.
\end{equation}
\end{lemma}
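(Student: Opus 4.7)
The plan is to mimic the approach of Lemmas~\ref{lem:moment1DH-} and~\ref{lem:temp1DH-}, exploiting the representation~\eqref{eq:errorZnHlambda} of the error $\Z^{H,\lambda}(t_n)-\Z_n^{H,\lambda}$ as a stochastic integral of the deterministic integrand $\varepsilon_{\lambda,t_n}^{\Delta t}$ against $\dd\beta^H$. Applying the It\^o isometry~\eqref{eq:Ito} together with the expression~\eqref{eq:KH-} of $\mathcal{K}_{H,t_n}^\star$ valid for $H\in(0,1/2)$, I would decompose $\E[|\Z^{H,\lambda}(t_n)-\Z_n^{H,\lambda}|^2]$ as a sum of three non-negative quantities, structurally identical to those arising in the proof of Lemma~\ref{lem:moment1DH-} (the single-integral piece and the two sub-terms $\mathcal{I}_{H,\lambda,t}^1$ and $\mathcal{I}_{H,\lambda,t}^2$), the only difference being that the integrand $\phi_{\lambda,t}$ is replaced everywhere by $\varepsilon_{\lambda,t_n}^{\Delta t}$.

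The key pointwise estimate is
\[
|\varepsilon_{\lambda,t_n}^{\Delta t}(s)|\le e^{-\lambda(t_n-s)}\min(1,\lambda\Delta t),\qquad s\in[0,t_n],
\]
which follows from the identity $\varepsilon_{\lambda,t_n}^{\Delta t}(s)=e^{-\lambda(t_n-s)}(1-e^{-\lambda(s-t_{\ell(s)})})$ and the elementary inequality $1-e^{-x}\le\min(1,x)$ for $x\ge 0$. Since $\alpha<2H<1$, one has $\min(1,\lambda\Delta t)^2\le(\lambda\Delta t)^{2\alpha}$. The single-integral piece and the analogue of $\mathcal{I}_{H,\lambda,t}^1$ can then be estimated verbatim as in~\eqref{eq:auxmoments0} and~\eqref{eq:auxmoments1}, multiplied by this extra factor $(\lambda\Delta t)^{2\alpha}$, which already yields the desired bound $C_{\alpha,H}\lambda^{-2H}(\lambda\Delta t)^{2\alpha}$ for these two contributions.

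The main obstacle will be the analogue of $\mathcal{I}_{H,\lambda,t}^2$, which involves the increment $\varepsilon_{\lambda,t_n}^{\Delta t}(s)-\varepsilon_{\lambda,t_n}^{\Delta t}(\tau)$: the function $\varepsilon_{\lambda,t_n}^{\Delta t}$ is only piecewise smooth, being discontinuous at the grid points $t_j$, so the increment no longer reduces to a product of exponentials as in the proof of Lemma~\ref{lem:moment1DH-}. To handle it, I would decompose
\[
\varepsilon_{\lambda,t_n}^{\Delta t}(s)-\varepsilon_{\lambda,t_n}^{\Delta t}(\tau)=\bigl(e^{-\lambda(t_n-s)}-e^{-\lambda(t_n-\tau)}\bigr)\bigl(1-e^{-\lambda(s-t_{\ell(s)})}\bigr)+e^{-\lambda(t_n-\tau)}\bigl(e^{-\lambda(\tau-t_{\ell(\tau)})}-e^{-\lambda(s-t_{\ell(s)})}\bigr).
\]
For the first piece, the scalar $(1-e^{-\lambda(s-t_{\ell(s)})})\le\min(1,\lambda\Delta t)$ can be pulled out of the inner integral and the remaining expression is controlled exactly as $\mathcal{I}_{H,\lambda,t}^2$ was in the proof of Lemma~\ref{lem:moment1DH-}, producing again the extra $(\lambda\Delta t)^{2\alpha}$ factor. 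For the second piece I would distinguish whether $s$ and $\tau$ lie in the same grid interval ($\ell(s)=\ell(\tau)$, in which case the Lipschitz-type bound $|e^{-\lambda(\tau-t_{\ell(\tau)})}-e^{-\lambda(s-t_{\ell(s)})}|\le\min(1,\lambda|\tau-s|)$ is available) or in different ones ($\ell(s)<\ell(\tau)$, for which a crude uniform bound of $2\min(1,\lambda\Delta t)$ combined with a splitting of the $\tau$-integral over grid intervals, in the spirit of the treatment of $\mathcal{J}_{H,\lambda,t_1,t_2}^{2,1}$ in the proof of Lemma~\ref{lem:temp1DH-}, yields the required estimate). A final interpolation between these bounds, as in Lemma~\ref{lem:temp1DH-}, is precisely where the strict inequality $\alpha<2H$ enters, through the integrability of $r^{2H-\alpha-1}$ near $r=0$, completing the proof of~\eqref{eq:error1DH-}.
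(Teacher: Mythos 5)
Your proposal is correct and follows essentially the same route as the paper's own proof: the same isometry-plus-kernel representation of the error via $\varepsilon_{\lambda,t_n}^{\Delta t}$, the same pointwise bound $|\varepsilon_{\lambda,t_n}^{\Delta t}(s)|\le e^{-\lambda(t_n-s)}\min(1,\lambda\Delta t)$ handling the single-integral term and the $\mathcal{I}^1$-type term, the same (up to a symmetric rearrangement of factors) decomposition of the increment $\varepsilon_{\lambda,t_n}^{\Delta t}(s)-\varepsilon_{\lambda,t_n}^{\Delta t}(\tau)$, and the same mechanism for the delicate jump term — a case analysis on whether $s$ and $\tau$ share a grid cell, an interpolation between the Lipschitz-type bound $\min\bigl(1,\lambda(\tau-s)\bigr)$ and the crude bound $\min\bigl(1,\lambda\Delta t\bigr)$, and a grid-cell splitting with geometric summation, with the strict inequality $\alpha<2H$ entering exactly where you say, through the convergence of $\int_0 r^{2H-\alpha-1}\dd r$ near $r=0$. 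The paper merely organizes the case analysis slightly differently (it isolates only the adjacent cell $\tau\in[t_{\ell(s)+1},t_{\ell(s)+2})$ for the crude bound, observing that $\tau\ge t_{\ell(s)+2}$ forces $\min\bigl(1,\lambda\Delta t\bigr)\le\min\bigl(1,\lambda(\tau-s)\bigr)$), but the resulting estimates coincide with yours.
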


\begin{proof}[Proof of Lemma~\ref{lem:error1DH-}]
Recall the expression~\eqref{eq:errorZnHlambda} of the error $\Z^{H,\lambda}(t_n)-\Z_n^{H,\lambda}$. Applying the It\^o isometry formula~\eqref{eq:Ito} and using the expression~\eqref{eq:KH-} for $\mathcal{K}_{H,T}^\star\phi (t)$ when $H\in(0,\frac12)$, one has
\begin{align*}
\E[|\Z^{H,\lambda}(t_n)-\Z_n^{H,\lambda}|^2]&=\int_{0}^{t_n}|\mathcal{K}_{H,t_n}^\star\varepsilon_{\lambda,t_n}^{\Delta t}(t)|^2 \dd t\\
&\le 2c_H^2\int_{0}^{t_n}\frac{|\varepsilon_{\lambda,t_n}^{\Delta t}(s)|^2}{(t_n-s)^{1-2H}}\dd s+2c_H^2\int_{0}^{t_n}\Big|s^{\frac12-H}\int_{s}^{t_n}\frac{s^{H-\frac12}\varepsilon_{\lambda,t_n}^{\Delta t}(s)-\tau^{H-\frac12}\varepsilon_{\lambda,t_n}^{\Delta t}(\tau)}{(\tau-s)^{\frac32-H}}\dd \tau\Big|^2 \dd s.
\end{align*}

For the first term, note that one has
\begin{equation}\label{eq:auxepsilon}
|\varepsilon_{\lambda,t_n}^{\Delta t}(s)|\le \min\bigl(1,\lambda\Delta t\bigr)e^{-\lambda(t_n-s)},\quad \forall~s\in[0,t_n],
\end{equation}
and as a result one obtains
\[
\int_{0}^{t_n}\frac{|\varepsilon_{\lambda,t_n}^{\Delta t}(s)|^2}{(t_n-s)^{1-2H}}\dd s\le \min\bigl(1,\lambda\Delta t\bigr)^{2}\int_{0}^{t_n}\frac{e^{-2\lambda(t_n-s)}}{(t_n-s)^{1-2H}}\dd s.
\]
Proceeding as in the proof of Lemma~\ref{lem:moment1DH-}, one obtains the following inequality: there exists $C_H\in(0,\infty)$ such that for all $\lambda\in(0,\infty)$, all $\Delta t\in(0,1)$ and all $n\ge 0$ one has
\begin{equation}\label{eq:auxerror0}
\int_{0}^{t_n}\frac{|\varepsilon_{\lambda,t_n}^{\Delta t}(s)|^2}{(t_n-s)^{1-2H}}\dd s\le C_H\lambda^{-2H} \min\bigl(1,\lambda\Delta t\bigr)^{2}.
\end{equation}

For the second term, note that one has the decomposition
\[
s^{H-\frac12}\varepsilon_{\lambda,t_n}^{\Delta t}(s)-\tau^{H-\frac12}\varepsilon_{\lambda,t_n}^{\Delta t}(\tau)=\bigl(s^{H-\frac12}-\tau^{H-\frac12}\bigr)\varepsilon_{\lambda,t_n}^{\Delta t}(s)+\tau^{H-\frac12}\bigl(\varepsilon_{\lambda,t_n}^{\Delta t}(s)-\varepsilon_{\lambda,t_n}^{\Delta t}(\tau)\bigr),
\]
therefore one obtains the upper bound
\[
\int_{0}^{t_n}\Big|s^{\frac12-H}\int_{s}^{t_n}\frac{s^{H-\frac12}\varepsilon_{\lambda,t_n}^{\Delta t}(s)-\tau^{H-\frac12}\varepsilon_{\lambda,t_n}^{\Delta t}(\tau)}{(\tau-s)^{\frac32-H}}\dd \tau\Big|^2 \dd s\le 2\bigl(\mathcal{K}_{H,\lambda,\Delta t,n}^1+\mathcal{K}_{H,\lambda,\Delta t,n}^2\bigr),
\]
where $\mathcal{K}_{H,\lambda,\Delta t,n}^1$ and $\mathcal{K}_{H,\lambda,\Delta t,n}^2$ are defined as
\begin{align*}
\mathcal{K}_{H,\lambda,\Delta t,n}^1&=\int_{0}^{t_n}\Big|s^{\frac12-H}\varepsilon_{\lambda,t_n}^{\Delta t}(s)\int_{s}^{t_n}\frac{s^{H-\frac12}-\tau^{H-\frac12}}{(\tau-s)^{\frac32-H}}\dd \tau\Big|^2 \dd s\\
\mathcal{K}_{H,\lambda,\Delta t,n}^2&=\int_{0}^{t_n}\Big|s^{\frac12-H}\int_{s}^{t_n}\frac{\tau^{H-\frac12}\bigl(\varepsilon_{\lambda,t_n}^{\Delta t}(s)-\varepsilon_{\lambda,t_n}^{\Delta t}(\tau)\bigr)}{(\tau-s)^{\frac32-H}}\dd \tau\Big|^2 \dd s.
\end{align*}

To deal with the term $\mathcal{K}_{H,\lambda,\Delta t,n}^1$, owing to the inequality~\eqref{eq:auxepsilon}, one has
\begin{align*}
\int_{0}^{t_n}\Big|s^{\frac12-H}\varepsilon_{\lambda,t_n}^{\Delta t}(s)\int_{s}^{t_n}\frac{s^{H-\frac12}-\tau^{H-\frac12}}{(\tau-s)^{\frac32-H}}\dd \tau\Big|^2 \dd s
&\le \min\bigl(1,\lambda\Delta t\bigr)^2\int_{0}^{t_n}\Big|s^{\frac12-H}e^{-2\lambda(t_n-s)}\int_{s}^{t_n}\frac{s^{H-\frac12}-\tau^{H-\frac12}}{(\tau-s)^{\frac32-H}}\dd \tau\Big|^2 \dd s\\
&\le \min\bigl(1,\lambda\Delta t\bigr)^2\mathcal{I}_{H,\lambda,t_n}^{1},
\end{align*}
where $\mathcal{I}_{H,\lambda,t_n}^{1}$ is defined in the proof of Lemma~\ref{lem:moment1DH-}. Applying the inequality~\eqref{eq:auxmoments1} then yields the following inequality: 
there exists $C_H\in(0,\infty)$ such that for all $\lambda\in(0,\infty)$, all $\Delta t\in(0,1)$ and all $n\ge 0$ one has
\begin{equation}\label{eq:auxerror1}
\mathcal{K}_{H,\lambda,\Delta t,n}^1\le C_H \lambda^{-2H} \min\bigl(1,\lambda\Delta t\bigr)^{2}.
\end{equation}
It remains to deal with the term $\mathcal{K}_{H,\lambda,\Delta t,n}^2$. Owing to the decomposition
\begin{align*}
\varepsilon_{\lambda,t_n}^{\Delta t}(s)-\varepsilon_{\lambda,t_n}^{\Delta t}(\tau)&=e^{-\lambda(t_n-s)}\bigl(1-e^{-\lambda(s-t_{\ell(s)})}\bigr)-e^{-\lambda(t_n-\tau)}\bigl(1-e^{-\lambda(\tau-t_{\ell(\tau)})}\bigr)\\
&=\bigl(e^{-\lambda(t_n-s)}-e^{-\lambda(t_n-\tau)}\bigr)\bigl(1-e^{-\lambda(\tau-t_{\ell(\tau)})}\bigr)\\
&\quad+e^{-\lambda(t_n-s)}\bigl(e^{-\lambda(\tau-t_{\ell(\tau)})}-e^{-\lambda(s-t_{\ell(s)})}\bigr),
\end{align*}
one has
\[
\mathcal{K}_{H,\lambda,\Delta t,n}^2\le 2\bigl( \mathcal{K}_{H,\lambda,\Delta t,n}^{2,1}+\mathcal{K}_{H,\lambda,\Delta t,n}^{2,2}\bigr),
\]
where $\mathcal{K}_{H,\lambda,\Delta t,n}^{2,1}$ and $\mathcal{K}_{H,\lambda,\Delta t,n}^{2,2}$ are defined as
\begin{align*}
\mathcal{K}_{H,\lambda,\Delta t,n}^{2,1}&=\int_{0}^{t_n}\Big|\int_{s}^{t_n}\frac{\bigl(e^{-\lambda(t_n-s)}-e^{-\lambda(t_n-\tau)}\bigr)\bigl(1-e^{-\lambda(\tau-t_{\ell(\tau)})}\bigr)}{(\tau-s)^{\frac32-H}}\dd \tau\Big|^2 \dd s\\
\mathcal{K}_{H,\lambda,\Delta t,n}^{2,2}&=\int_{0}^{t_n}\Big|\int_{s}^{t_n}\frac{e^{-\lambda(t_n-s)}\bigl(e^{-\lambda(\tau-t_{\ell(\tau)})}-e^{-\lambda(s-t_{\ell(s)})}\bigr)}{(\tau-s)^{\frac32-H}}\dd \tau\Big|^2 \dd s.
\end{align*}
For the term $\mathcal{K}_{H,\lambda,\Delta t,n}^{2,1}$, using the upper bound
\[
|1-e^{-\lambda(\tau-t_{\ell(\tau)})}|\le \min\bigl(1,\lambda\Delta t\bigr),
\]
one obtains the upper bound
\[
\mathcal{K}_{H,\lambda,\Delta t,n}^{2,1}\le \min\bigl(1,\lambda\Delta t\bigr)^2 \int_{0}^{t_n}\Big|\int_{s}^{t_n}\frac{\bigl(e^{-\lambda(t_n-s)}-e^{-\lambda(t_n-\tau)}\bigr)}{(\tau-s)^{\frac32-H}}\dd \tau\Big|^2 \dd s.
\]
Applying the inequality~\eqref{eq:auxmoments2} on the term $\mathcal{I}_{H,\lambda,t_n}^{2}$ from the proof of Lemma~\ref{lem:moment1DH-} then yields the following inequality: 
there exists $C_H\in(0,\infty)$ such that for all $\lambda\in(0,\infty)$, all $\Delta t\in(0,1)$ and all $n\ge 0$ one has
\begin{equation}\label{eq:auxerror21}
\mathcal{K}_{H,\lambda,\Delta t,n}^{2,1}\le C_H \lambda^{-2H} \min\bigl(1,\lambda\Delta t\bigr)^{2}.
\end{equation}
The treatment of the error term $\mathcal{K}_{H,\lambda,\Delta t,n}^{2,2}$ is more delicate.

One has the inequalities
\begin{equation}\label{eq:claim_error_1}
\big|e^{-\lambda(\tau-t_{\ell(\tau)})}-e^{-\lambda(s-t_{\ell(s)})}\big|\le \min\bigl(1,\lambda\Delta t)
\end{equation}
and
\begin{equation}\label{eq:claim_error_2}
\big|e^{-\lambda(\tau-t_{\ell(\tau)})}-e^{-\lambda(s-t_{\ell(s)})}\big|\le \min\bigl(1,\lambda(\tau-s)\bigr)+\mathds{1}_{\tau\in[t_{\ell(s)+1},t_{\ell(s)+2}[}\min\bigl(1,\lambda\Delta t)
\end{equation}
are satisfied. To prove the inequality~\eqref{eq:claim_error_1}, it suffices to observe that $\tau-t_{\ell(\tau)}\le \Delta t$ and $s-t_{\ell(s)}\le \Delta t$. To prove the inequality~\eqref{eq:claim_error_2}, three cases are considered:
\begin{itemize}
\item if $\tau>t_{\ell(s)+2}$, one has $\tau-s\ge \Delta t$, therefore
\[
\big|e^{-\lambda(\tau-t_{\ell(\tau)})}-e^{-\lambda(s-t_{\ell(s)})}\big|\le \min\bigl(1,\lambda\Delta t)\le \min\bigl(1,\lambda(\tau-s)\bigr);
\]
\item if $\tau<t_{\ell(s)+1}$, one has $t_{\ell(\tau)}=t_{\ell(s)}$, thus
\[
\big|e^{-\lambda(\tau-t_{\ell(\tau)})}-e^{-\lambda(s-t_{\ell(s)})}\big|=\bigl(1-e^{-\lambda(\tau-s)}\bigr)e^{-\lambda(s-t_{\ell(s)})}\le \min\bigl(1,\lambda(\tau-s)\bigr);
\]
\item if $\tau\in[t_{\ell(s)+1},t_{\ell(s)+2}[$, one has $t_{\ell(\tau)}=t_{\ell(s)+1}=t_{\ell(s)}+\Delta t$
\begin{align*}
\big|e^{-\lambda(\tau-t_{\ell(\tau)})}-e^{-\lambda(s-t_{\ell(s)})}\big|&\le \big|e^{-\lambda(\tau-t_{\ell(s)}-\Delta t)}-e^{-\lambda(s-t_{\ell(s)})}\big|\\
&\le \big|e^{-\lambda(\tau-t_{\ell(s)}-\Delta t)}-e^{-\lambda(\tau-t_{\ell(s)})}\big|+\big|e^{-\lambda(\tau-t_{\ell(s)})}-e^{-\lambda(s-t_{\ell(s)})}|\\
&\le \big|1-e^{-\lambda\Delta t}\big|+\big|1-e^{-\lambda(\tau-s)}\big|\\
&\le \min\bigl(1,\lambda\Delta t)+\min\bigl(1,\lambda(\tau-s)\bigr).
\end{align*}
\end{itemize}
Gathering the inequalities obtained in the three cases yields the inequality~\eqref{eq:claim_error_2}.

Let $\alpha\in[0,2H)$. Combining the inequalities~\eqref{eq:claim_error_1} and~\eqref{eq:claim_error_2}, one obtains
\begin{align*}
\big|e^{-\lambda(\tau-t_{\ell(\tau)})}-e^{-\lambda(s-t_{\ell(s)})}\big|&\le \big|e^{-\lambda(\tau-t_{\ell(\tau)})}-e^{-\lambda(s-t_{\ell(s)})}\big|^{1-\alpha}\big|e^{-\lambda(\tau-t_{\ell(\tau)})}-e^{-\lambda(s-t_{\ell(s)})}\big|^{\alpha}\\
&\le \Bigl(\min\bigl(1,\lambda(\tau-s)\bigr)+\mathds{1}_{\tau\in[t_{\ell(s)+1},t_{\ell(s)+2}[}\min\bigl(1,\lambda\Delta t)\Bigr)^{1-\alpha}\Bigl(\min\bigl(1,\lambda\Delta t)\Bigr)^{\alpha}\\
&\le \min\bigl(1,\lambda(\tau-s)\bigr)^{1-\alpha}\min\bigl(1,\lambda\Delta t)^{\alpha}+\mathds{1}_{\tau\in[t_{\ell(s)+1},t_{\ell(s)+2}[}\min\bigl(1,\lambda\Delta t).
\end{align*}

As a result, one obtains the upper bound
\begin{align*}
\Big|\int_{s}^{t_n}\frac{\bigl(e^{-\lambda(\tau-t_{\ell(\tau)})}-e^{-\lambda(s-t_{\ell(s)})}\bigr)}{(\tau-s)^{\frac32-H}}\dd \tau\Big|
&\le \bigl(\lambda\Delta t\bigr)^{\alpha}\int_{s}^{t_n}\frac{\min\bigl(1,\lambda(\tau-s)\bigr)^{1-\alpha}}{(\tau-s)^{\frac32-H}}\dd \tau\\
&+\min\bigl(1,\lambda\Delta t\bigr)\int_{t_{\ell(s)+1}}^{t_{\ell(s)+2}}\frac{1}{(\tau-s)^{\frac32-H}}\dd \tau.
\end{align*}
As a result, one obtains
\[
\mathcal{K}_{H,\lambda,\Delta t,n}^{2,2}\le 2\Bigl(\mathcal{K}_{H,\lambda,\Delta t,n}^{2,2,1}+\mathcal{K}_{H,\lambda,\Delta t,n}^{2,2,2}\Bigr),
\]
where $\mathcal{K}_{H,\lambda,\Delta t,n}^{2,2}$ and $\mathcal{K}_{H,\lambda,\Delta t,n}^{2,2}$ are defined by
\begin{align*}
\mathcal{K}_{H,\lambda,\Delta t,n}^{2,2,1}&=\bigl(\lambda\Delta t\bigr)^{2\alpha}\int_{0}^{t_n}e^{-2\lambda(t_n-s)}
\Big|\int_{s}^{t_n}\frac{\min\bigl(1,\lambda(\tau-s)\bigr)^{1-\alpha}}{(\tau-s)^{\frac32-H}}\dd \tau\Big|^2 \dd s\\
\mathcal{K}_{H,\lambda,\Delta t,n}^{2,2,2}&=\min\bigl(1,\lambda\Delta t\bigr)^2\int_{0}^{t_n}e^{-2\lambda(t_n-s)}\Big|\int_{t_{\ell(s)+1}}^{t_{\ell(s)+2}}\frac{1}{(\tau-s)^{\frac32-H}}\dd \tau\Big|^2 \dd s
\end{align*}

For the term $\mathcal{K}_{H,\lambda,\Delta t,n}^{2,2,1}$, one has
\begin{align*}
\mathcal{K}_{H,\lambda,\Delta t,n}^{2,2,1}&\le \bigl(\lambda\Delta t\bigr)^{2\alpha}\int_{0}^{t_n}e^{-2\lambda(t_n-s)} \Bigl(\int_{0}^{\infty}\frac{\min(1,\lambda \tau')^{1-\alpha}}{(\tau')^{\frac32-H}}\dd \tau'\Bigr)^2 \dd s\\
&\le \lambda^{-2H}\bigl(\lambda\Delta t\bigr)^{2\alpha}\int_{0}^{\infty}e^{-2r}\dd r \Bigl(\int_{0}^{\infty}\frac{\min(1,r)^{1-\alpha}}{r^{\frac32-H}}\dd r\Bigr)^2,
\end{align*}
after changes of variables. Assuming that $\alpha<2H$ with $H\in(0,1/2)$ ensures that $\alpha-H<1/2$, and thus that one has $\int_{0}^{\infty}\frac{\min(1,r)^{1-\alpha}}{r^{\frac32-H}}\dd r<\infty$. Therefore one obtains the following inequality: for all $\alpha\in[0,2H)$, there exists $C_{\alpha,H}\in(0,\infty)$ such that for all $\lambda\in(0,\infty)$, all $\Delta t\in(0,1)$ and all $n\ge 0$ one has
\begin{equation}\label{eq:auxerror221}
\mathcal{K}_{H,\lambda,\Delta t,n}^{2,2,1}\le C_{\alpha,H} \lambda^{-2H} \bigl(\lambda\Delta t\bigr)^{2\alpha}.
\end{equation}

To deal with the term $\mathcal{K}_{H,\lambda,\Delta t,n}^{2,2,2}$, note that one has the following identity
\[
\int_{t_{\ell(s)+1}}^{t_{\ell(s)+2}}\frac{1}{(\tau-s)^{\frac32-H}}\dd \tau=\frac{1}{\frac12-H}\Bigl(\frac{1}{(t_{\ell(s)+1}-s)^{\frac12-H}}-\frac{1}{(t_{\ell(s)+2}-s)^{\frac12-H}}\Bigr),
\]
which provides two inequalities:
\[
\int_{t_{\ell(s)+1}}^{t_{\ell(s)+2}}\frac{1}{(\tau-s)^{\frac32-H}}\dd \tau\le \frac{1}{\frac12-H}\frac{1}{(t_{\ell(s)+1}-s)^{\frac12-H}},\qquad \int_{t_{\ell(s)+1}}^{t_{\ell(s)+2}}\frac{1}{(\tau-s)^{\frac32-H}}\dd \tau\le \frac{1}{\frac12-H}\frac{\Delta t}{(t_{\ell(s)+1}-s)^{\frac32-H}}.
\]
For all $\alpha\in[0,2H)$, combining the two upper bounds above by an interpolation argument, one has
\[
\int_{t_{\ell(s)+1}}^{t_{\ell(s)+2}}\frac{1}{(\tau-s)^{\frac32-H}}\dd \tau\le \frac{1}{\frac12-H}\frac{\Delta t^{\frac{\alpha}{2}}}{(t_{\ell(s)+1}-s)^{\frac12-H+\frac{\alpha}{2}}}.
\]

Therefore, one obtains
\begin{align*}
\mathcal{K}_{H,\lambda,\Delta t,n}^{2,2,2}
&\le C_H\Delta t^{\alpha}\min\bigl(1,\lambda\Delta t\bigr)^2\int_{0}^{t_n}e^{-2\lambda(t_n-s)}\frac{1}{(t_{\ell(s)+1}-s)^{1-2H+\alpha}}\dd s\\
&\le C_H\Delta t^{\alpha}\min\bigl(1,\lambda\Delta t\bigr)^2\sum_{k=0}^{n-1}e^{-2\lambda(t_n-t_{k+1})}\int_{t_k}^{t_{k+1}}\frac{1}{(t_{\ell(s)+1}-s)^{1-2H+\alpha}}\dd s\\
&\le C_{H}\Delta t^{\alpha}\min\bigl(1,\lambda\Delta t\bigr)^2\sum_{k=0}^{n-1}e^{-2\lambda(t_n-t_{k+1})}\int_{0}^{\lambda\Delta t}\frac{\lambda^{\alpha-2H}}{r^{1-2H+\alpha}}\dd r\\
&\le C_{H}\lambda^{-2H}(\lambda\Delta t)^{\alpha}\frac{\min\bigl(1,\lambda\Delta t\bigr)^2}{1-e^{-2\lambda\Delta t}} \int_{0}^{\infty}\frac{1}{r^{1-2H+\alpha}}\dd r.
\end{align*}
Assuming that $\alpha<2H$ ensures that $\int_{0}^{\infty}\frac{1}{r^{1-2H+\alpha}}\dd r<\infty$. In addition, one has
\[
\underset{z\in(0,\infty)}\sup~\frac{\min\bigl(1,z\bigr)}{1-e^{-2z}}<\infty.
\]
Therefore one obtains the following inequality: for all $\alpha\in[0,2H)$, there exists $C_{\alpha,H}\in(0,\infty)$ such that for all $\lambda\in(0,\infty)$, all $\Delta t\in(0,1)$ and all $n\ge 0$ one has
\begin{equation}\label{eq:auxerror222}
\mathcal{K}_{H,\lambda,\Delta t,n}^{2,2,2}\le C_{\alpha,H} \lambda^{-2H} \bigl(\lambda\Delta t\bigr)^{2\alpha}.
\end{equation}
Combining the inequalities~\eqref{eq:auxerror221} and~\eqref{eq:auxerror222} then yields the following inequality: for all $\alpha\in[0,2H)$, there exists $C_{\alpha,H}\in(0,\infty)$ such that for all $\lambda\in(0,\infty)$, all $\Delta t\in(0,1)$ and all $n\ge 0$ one has
\begin{equation}\label{eq:auxerror22}
\mathcal{K}_{H,\lambda,\Delta t,n}^{2,2}\le C_{\alpha,H} \lambda^{-2H} \bigl(\lambda\Delta t\bigr)^{2\alpha}.
\end{equation}
Gathering the estimates~\eqref{eq:auxerror21} and~\eqref{eq:auxerror22} then yields the following inequality: for all $\alpha\in[0,2H)$, there exists $C_{\alpha,H}\in(0,\infty)$ such that for all $\lambda\in(0,\infty)$, all $\Delta t\in(0,1)$ and all $n\ge 0$ one has
\begin{equation}\label{eq:auxerror2}
\mathcal{K}_{H,\lambda,\Delta t,n}^{2}\le C_{\alpha,H} \lambda^{-2H} \bigl(\lambda\Delta t\bigr)^{2\alpha}.
\end{equation}

Finally, gathering the inequalities~\eqref{eq:auxerror0},~\eqref{eq:auxerror1} and~\eqref{eq:auxerror2}, one obtains the inequality~\eqref{eq:error1DH-} and the proof of Lemma~\ref{lem:error1DH-} is completed.
\end{proof}

Note that combining Lemma~\ref{lem:moment1DH-} and~\ref{lem:error1DH-}, one obtains the following moment bounds for the discrete-time approximation $\bigl(\Z_n^{H,\lambda}\bigr)_{n\ge 0}$ of $\bigl(\Z^{H,\lambda}(t_n)\bigr)_{n\ge 0}$.
\begin{lemma}\label{lem:momentnum1DH-}
For all $H\in(0,1/2)$, there exists $C_H\in(0,\infty)$ such that for  all $\lambda\in(0,\infty)$ one has
\begin{equation}\label{eq:momentnum1DH-}
\underset{\Delta t\in(0,1)}\sup~\underset{n\ge 0}\sup~\E[|\Z_n^{H,\lambda}|^2]\le C_H\lambda^{-2H}.
\end{equation}
\end{lemma}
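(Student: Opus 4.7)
The plan is to deduce this uniform moment bound as an immediate corollary of the two preceding results: Lemma~\ref{lem:moment1DH-}, which gives the uniform-in-time bound $\sup_{t\ge 0}\E[|\Z^{H,\lambda}(t)|^2]\le C_H\lambda^{-2H}$, and Lemma~\ref{lem:error1DH-}, which controls the discretization error $\E[|\Z^{H,\lambda}(t_n)-\Z_n^{H,\lambda}|^2]$.

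First, I would use the trivial identity $\Z_n^{H,\lambda}=\Z^{H,\lambda}(t_n)-\bigl(\Z^{H,\lambda}(t_n)-\Z_n^{H,\lambda}\bigr)$ together with the elementary inequality $(a+b)^2\le 2a^2+2b^2$ to split
\[
\E[|\Z_n^{H,\lambda}|^2] \le 2\,\E[|\Z^{H,\lambda}(t_n)|^2] + 2\,\E[|\Z^{H,\lambda}(t_n)-\Z_n^{H,\lambda}|^2].
\]
The first summand is handled directly by Lemma~\ref{lem:moment1DH-}, yielding a contribution bounded by $2C_H\lambda^{-2H}$ uniformly in $n\ge 0$. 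For the second summand, the key observation is that Lemma~\ref{lem:error1DH-} is stated for every $\alpha\in[0,2H)$, so the endpoint choice $\alpha=0$ is admissible; with this choice the factor $(\lambda\Delta t)^{2\alpha}$ reduces to one, producing the uniform-in-$(\lambda,\Delta t)$ bound $2C_{0,H}\lambda^{-2H}$ on the error variance.

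Summing the two contributions and taking suprema over $\Delta t\in(0,1)$ and $n\ge 0$ then yields~\eqref{eq:momentnum1DH-} with a new constant $C_H\in(0,\infty)$ depending only on $H$. Since both ingredients are already proved, this step is essentially bookkeeping rather than a substantive argument; the only subtle point is to verify the admissibility of the endpoint value $\alpha=0$ in Lemma~\ref{lem:error1DH-}, which obviates any separate treatment of the regime $\lambda\Delta t\ge 1$. If one were to proceed without Lemma~\ref{lem:error1DH-} at all, the main obstacle would instead be to analyze the recursion~\eqref{eq:ZnHlambda_def} directly and ensure that the geometric sum of variances of the Gaussian increments $\Delta\beta_j^H$ cannot blow up when $\lambda\Delta t$ is large; the strategy above sidesteps this issue completely.
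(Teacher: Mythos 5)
Your proof is correct and is essentially identical to the paper's argument: the paper proves this lemma with the single remark that it follows by ``combining Lemma~\ref{lem:moment1DH-} and~\ref{lem:error1DH-}'', which is precisely your triangle-inequality bookkeeping $\E[|\Z_n^{H,\lambda}|^2]\le 2\E[|\Z^{H,\lambda}(t_n)|^2]+2\E[|\Z^{H,\lambda}(t_n)-\Z_n^{H,\lambda}|^2]$. Your key observation, that the endpoint choice $\alpha=0$ is admissible in Lemma~\ref{lem:error1DH-} and turns the factor $(\lambda\Delta t)^{2\alpha}$ into one (thereby giving a bound uniform in $\Delta t\in(0,1)$ without any case analysis on $\lambda\Delta t$), is exactly the right way to carry out that combination.
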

\end{appendix}

\end{document}